\newfont{\bb}{msbm10 at 11pt}
\newfont{\bbsmall}{msbm8 at 8pt}
\def\rth{\mathbb{R}^3}
\def\R{\mathbb{R}}
\def\B{\mathbb{B}}
\def\N{\mathbb{N}}
\def\D{\mathbb{D}}
\def\esf{\mathbb{S}}
\newcommand{\ben}{\begin{enumerate}}
\newcommand{\bit}{\begin{itemize}}
\newcommand{\een}{\end{enumerate}}
\newcommand{\eit}{\end{itemize}}
\newcommand{\wh}{\widehat}
\newcommand{\Int}{\mbox{Int}}
\newcommand{\wt}{\widetilde}
\def\a{{\alpha}}
\def\lc{{\cal L}}
\def\g{{\gamma}}
\def\G{{\Gamma}}
\def\l{{\lambda}}
\def\de{{\delta}}
\def\be{{\beta}}
\def\ve{{\varepsilon}}
\def\cF{{\cal F}}
\def\cS{{\cal S}}
\def\centerbmp#1#2#3{\vskip#2\relax\centerline{\hbox to#1{\special
    {bmp:#3 x=#1, y=#2}\hfil}}}
\newtheorem{theorem}{Theorem}[section]
\newtheorem{lemma}[theorem]{Lemma}
\newtheorem{proposition}[theorem]{Proposition}
\newtheorem{remark}[theorem]{Remark}
\newtheorem{corollary}[theorem]{Corollary}
\newtheorem{definition}[theorem]{Definition}
\newtheorem{assertion}[theorem]{Assertion}
\newenvironment{proof}{\smallskip\noindent{\it Proof.}\hskip \labelsep}
{\hfill\penalty10000\raisebox{-.09em}{$\Box$}\par\medskip}
\begin{document}
\begin{title}
{The classification of CMC foliations of $\R^3$ and $\esf ^3$ with countably many singularities}
\end{title}
\vskip .2in

\begin{author}
{William H. Meeks III\thanks{This material is based upon
   work for the NSF under Award no. DMS - 1309236.
   Any opinions, findings, and conclusions or recommendations
   expressed in this publication are those of the authors and do not
   necessarily reflect the views of the NSF.}
   \and Joaqu\'\i n P\' erez
\and Antonio Ros\thanks{The second and third authors were supported in part
by the MEC/FEDER grant no. MTM2011-22547, and the
Regional J. Andaluc\'\i a grant no. P06-FQM-01642.}}
\end{author}
\maketitle
\begin{abstract}
In this paper we generalize the Local Removable Singularity
Theorem in~\cite{mpr10} for minimal laminations to the case of weak
$H$-laminations (with $H\in \R $ constant) in a punctured ball of a
Riemannian three-manifold.
We also obtain a curvature estimate for any weak CMC foliation
(with possibly varying constant mean curvature from leaf to leaf)
of a compact Riemannian three-manifold $N$ with boundary solely
in terms of a bound of the absolute sectional curvature of $N$ and of
the distance  to the boundary of $N$.
We then apply these results to classify weak CMC foliations
of $\R^3$ and $\esf^3$ with a closed countable set of singularities.

\vspace{.3cm}

\noindent{\it Mathematics Subject Classification:} Primary 53A10,
   Secondary 49Q05, 53C42

\noindent{\it Key words and phrases:} Minimal surface, $H$-surface,
 stability, curvature estimates,
finite total curvature, minimal lamination, weak $H$-lamination,
weak CMC foliation, removable singularity.
\end{abstract}

\section{Introduction.}

In this paper, we address a number of outstanding
classical questions on the geometry of embedded
surfaces of constant mean curvature and more generally, laminations
and foliations of $\R^3$ and other three-manifolds, where the leaves
of these laminations are surfaces with constant mean curvature
(possibly varying from
leaf to leaf). In the foliation case, we call every such foliation a
{\it CMC foliation.} The first of these classical problems is to
classify codimension one CMC foliations of
$\R^3$ or $\esf^3$ (with their standard metrics) in the complement of a closed countable
set ${\cal S}$. The simplest such examples in  $\R^3$ are families of
parallel planes or concentric spheres around a given point. A
slightly more complicated example appears when considering a
family of pairwise disjoint planes and spheres as in
Figure~\ref{figspheres}, where the set ${\cal S}$ consists of two points.
\begin{figure}
\begin{center}
\includegraphics[width=12cm]{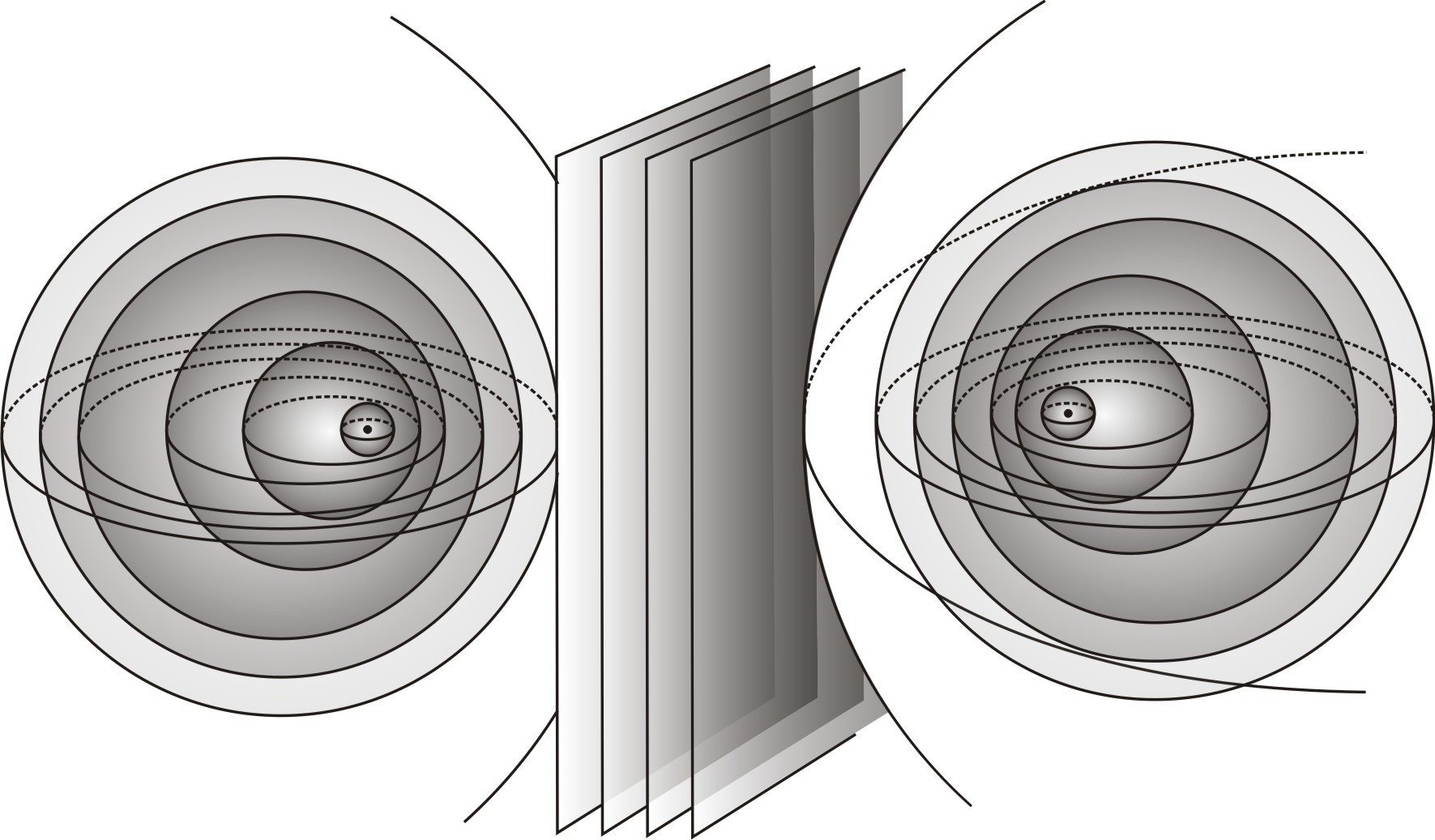}
\caption{A foliation of $\R^3$  by spheres and planes with two
singularities. }
\label{figspheres}
\end{center}
\end{figure}
We solve this classification problem in complete generality (see
Theorem~\ref{thmspheres} for a solution of an even more general
problem, where the leaves of the "foliation" are allowed to
intersect in a controlled manner\footnote{The quotes here refer to
the notion of {\it weak CMC foliation;} see Definition~\ref{definition}.}):

\begin{theorem}
\label{thmspheresintrod}
Suppose ${\cal F}$ is a CMC foliation of $\rth$ with a  closed
countable set $ \cal S$ of singularities\footnote{We mean that ${\cal F}$ is a foliation
of $\rth-{\cal S}$, and it does not extend to a CMC foliation of
$\rth-{\cal S}'$ for any proper closed subset ${\cal S}'\subset
{\cal S}$.}
Then, all leaves of ${\cal F}$ are contained in
planes and round spheres. Furthermore if $\cal S$ is empty, then $\cal F$ is a foliation by planes.
\end{theorem}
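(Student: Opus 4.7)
\medskip
\noindent\textbf{Proof plan.} The approach combines the two main tools announced in the abstract: the curvature estimate for weak CMC foliations of compact three-manifolds with boundary, and the Local Removable Singularity Theorem for weak $H$-laminations on punctured balls. The argument splits into the direct case $\cS=\emptyset$ and the general case, in which one first pins down the local model of $\cF$ at each isolated singularity and then bootstraps to a global classification.

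The case $\cS=\emptyset$ is handled by the curvature estimate alone. Applied to the ball $B(0,R)\subset\R^3$ (whose absolute sectional curvature is zero), it bounds the norm $|A|$ of the second fundamental form of every leaf at every interior point $x$ by a constant depending only on $R-|x|$; letting $R\to\infty$ forces $|A|\equiv 0$ on every leaf. Each leaf is thus a complete flat CMC surface in $\R^3$, hence a plane, and a foliation of $\R^3$ by pairwise disjoint embedded planes is automatically a foliation by parallel planes.

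For $\cS\neq\emptyset$ I would first use that any closed countable subset of $\R^3$ is scattered (by the Cantor--Bendixson theorem), so $\cS$ has isolated points. Let $p\in\cS$ be isolated and pick $r>0$ with $B(p,r)\cap\cS=\{p\}$; then $\cF$ restricts to a CMC foliation of the punctured ball $B(p,r)-\{p\}$, to which the Local Removable Singularity Theorem applies. By the hypothesis that $\cF$ does not extend across any proper closed subset of $\cS$, the point $p$ is \emph{not} a removable singularity, so the leaves near $p$ must violate the hypotheses that would allow removal. Combining this with the curvature estimate on $B(p,r)-\{p\}$, I expect to conclude that in some smaller punctured ball about $p$ the leaves of $\cF$ form a one-parameter family of round spheres concentric at $p$ and with radii shrinking to $0$. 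One then extends this family maximally; outside the largest such sphere, the remaining leaves of $\cF$ are complete embedded CMC surfaces in $\R^3-\cS$ disjoint from the sphere family, and Alexandrov-type reflection together with the curvature estimate (now applied away from $\cS$) forces each to be a plane or a round sphere centered at another point of $\cS$. Non-isolated points of $\cS$ would be handled by transfinite induction along the Cantor--Bendixson derivation of $\cS$, applying the isolated-point analysis at each successive stage.

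The main obstacle I foresee is the local analysis at a non-removable isolated singularity, namely showing that the only possible local model is a shrinking family of round spheres concentric at $p$. Excluding more exotic CMC accumulations --- catenoidal or helicoidal limits, or laminations with unbounded topology --- seems to require the full strength of the Local Removable Singularity Theorem for weak $H$-laminations together with the rigidity imposed by the foliation hypothesis (disjointness and properness of leaves away from $\cS$).
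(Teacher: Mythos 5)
Your overall architecture (curvature estimate for the empty-singularity case and for decay at infinity, removable singularity theorem to detect non-removability via unbounded mean curvature, Alexandrov for compact leaves, Baire/Cantor--Bendixson for the countable set) matches the paper's toolkit, and the $\cS=\emptyset$ case is fine. But there are two genuine gaps in the $\cS\neq\emptyset$ part. First, your proposed local model at an isolated singularity $p$ --- ``a one-parameter family of round spheres concentric at $p$ with radii shrinking to $0$'' --- is incomplete. The paper shows there are \emph{two} local models: either a punctured neighborhood of $p$ foliated by spheres enclosing $p$ (your case), or a one-sided family of spheres all \emph{tangent} to each other at $p$ (the model being all spheres tangent to the $(x_1,x_2)$-plane at the origin together with that plane). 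The second configuration genuinely occurs, cannot be excluded, and changes the subsequent bookkeeping: the maximal ``sphere-saturated'' open set $U_p$ need not contain $p$ in its interior, and two such maximal sets can coexist at a single singularity. Any argument that only treats the concentric case is not a proof.

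Second, ``transfinite induction along the Cantor--Bendixson derivation'' is a plan, not an argument: you have not identified what makes the induction step close. The paper's proof turns on a specific, non-obvious assertion: if the maximal open set $U_p$ associated to an isolated singularity $p$ is a \emph{ball} (rather than a halfspace), then $\partial U_p\cap\cS$ must contain a point that is \emph{not} isolated in $\cS$. Its proof requires a holonomy/monodromy analysis showing the leaves just outside $\partial U_p$ are unbranched graphs over it, hence extend to closed CMC spheres, contradicting maximality of $U_p$ unless worse singularities sit on $\partial U_p$. Combined with the fact that the radii of the balls $U_{p_n}$ shrink to zero as isolated singularities $p_n$ accumulate, this forces a contradiction at any isolated point of the first derived set $\cS-\cS_0$, so every $U_p$ is a halfspace and $|\cS|\leq 2$ --- no transfinite induction is needed or, as far as one can see, would suffice, since at limit stages of a derivation you would have no control over how the sphere families around infinitely many isolated singularities interact. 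Without an analogue of this boundary assertion, your outline does not rule out, say, a countable closed set $\cS$ with Cantor--Bendixson rank $2$ whose isolated points each carry a small sphere family.
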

In the case of the unit three-sphere $\esf^3\subset \R^4$ with its standard metric of constant sectional 
curvature, we obtain a similar result:
\par
\vspace{.2cm}
{\it The leaves of every CMC foliation   of $\esf^3$ with a closed
countable set $ \cal S$ of singularities are contained in round spheres,
and ${\cal S}$ is always non-empty.
}
\par
\vspace{.2cm}

We remark that the special case of Theorem~\ref{thmspheresintrod}
where the singular set ${\cal S}$ of ${\cal F}$ is empty is a
classical result of Meeks~\cite{me17}. Also, we note that in the
statement of the above theorem, we made no assumption on the
regularity of the foliation ${\cal F}$. However, the proofs in this
paper require that ${\cal F}$ has bounded second fundamental
form on compact sets of $N=\R^3$ or $\esf^3$ minus the singular set
${\cal S}$. This bounded curvature assumption always holds
for a topological CMC foliation\footnote{See Definition~\ref{deflamination}
for this concept.} by
%
%
recent work of Meeks and Tinaglia
\cite{mt7} on curvature estimates for embedded, non-zero constant
mean curvature disks and a related 1-sided curvature estimate for embedded surfaces of any constant mean curvature;  in
the case that all of the leaves of the lamination of a three-manifold are
minimal, this 1-sided curvature estimate was given earlier by
Colding and Minicozzi~\cite{cm23}.

Consider a foliation ${\cal F}$ of a Riemannian three-manifold $N$
with leaves having constant absolute mean curvature.
After possibly passing to a four-sheeted cover, we can assume $N$ is oriented and that all
leaves of ${\cal F}$ are oriented consistently, in the sense that
there exists a continuous, nowhere zero vector field in $N$ which is
transversal to the leaves of ${\cal F}$. In this situation, the mean
curvature function of the leaves of ${\cal F}$ is well-defined and so $\cF$ is a CMC foliation.
Therefore, when analyzing the structure of such a CMC foliation ${\cal F}$,
it is natural to consider for each $H\in \R $,
the subset ${\cal F}(H)$ of ${\cal F}$ of those leaves that have
mean curvature $H$. Such a subset ${\cal F}(H)$ 
is closed since the mean curvature function is continuous on
${\cal F}$; ${\cal F}(H)$ is an example of an $H$-lamination. A
cornerstone in proving the above classification results is to analyze the
structure of an $H$-lamination ${\cal L}$ (or more generally, a weak
$H$-lamination, see Definition~\ref{definition}) of a punctured ball
in a Riemannian three-manifold, in a small neighborhood of the
puncture. This local problem can be viewed as a desingularization
problem. In our previous paper~\cite{mpr10}, we characterized the removability of
an isolated singularity of a minimal lamination ${\cal L}$ of a punctured
ball in terms of the growth of the norm of the second fundamental form of
the leaves of ${\cal L}$ when extrinsically approaching the puncture. We will
extend this Local Removable Singularity Theorem to the case of a
weak $H$-lamination, see Theorem~\ref{tt2} below.

Next we set some specific notation to be used throughout the paper, which is necessary
to state the next Local Removable Singularity Theorem.
Given a Riemannian three-manifold $N$ and a point $p\in N$, we  denote
by $d_N$ the distance function in $N$ and by $B_N(p,r), \overline{B}_N(p,r)$, $S_N^2(p,r)$
the open metric ball of center $p$ and radius $r>0$, its closure and boundary sphere,
respectively. In the case $N=\R^3$, we use the notation $\B (p,r)=B _{\R^3}(p,r)$, $\esf^2(p,r)=
S^2_{\R^3}(p,r)$ and $\B (r)=\B (\vec{0},r)$, $\esf^2(r)=\esf^2(\vec{0},r)$,
where $\vec{0}=(0,0,0)$. Furthermore,
$R\colon\R^3\to \R $ stands for the distance function to
the origin $\vec{0}$.
For a codimension-one lamination  $\lc$ of $N$ and a leaf $L$ of
${\cal L}$, we denote by $|\sigma _{L}|$
the norm of the second fundamental form of $L$. Since
leaves of ${\cal L}$ do not intersect, it makes sense
to consider the norm of the second fundamental form as a
function defined on the union of the leaves of
${\cal L}$, which we denote by
 $|\sigma _{{\cal L}}|$.
In the case of a weak $H$-lamination $\lc $ of $N$, given $p\in {\cal L}$ there
exist at most two leaves of ${\cal L}$ passing through $p$ (by the maximum
principle for constant mean curvature surfaces), and thus, we define
$|\sigma _{\cal L}|$ to be the function on ${\cal L}$ that assigns to each
$p\in {\cal L}$ the maximum of the norms of the second fundamental forms of leaves
$L$ of ${\cal L}$ such that $p\in L$. Observe that $|\sigma _{\cal L}|$ is
not necessarily continuous.

\begin{theorem} [Local Removable Singularity Theorem]
\label{tt2}
A weak $H$-lamination ${\cal L}$ of a punctured ball $B_N(p,r)-\{ p\} $ of a
Riemannian three-manifold $N$ extends to a weak $H$-lamination
of $B_N(p,r)$ if and only if there exists a positive constant $C$
such that $|\sigma _{{\cal L}}|\, d_N(p,\cdot )\leq C$ in some subball.
In particular under this hypothesis,
\begin{enumerate}
\item The second fundamental form of  ${\cal L}$ is bounded in a neighborhood of $p$.
\item If ${\cal L}$ consists of a single leaf $M\subset B_N(p,r)-\{ p\} $
which is a properly immersed weak $H$-surface\footnote{We mean here that
$M$ is allowed to intersect itself only in the way that the leaves of a weak
$H$-lamination might intersect, see Definition~\ref{definition}.}
with $\mbox{\rm \O }\neq \partial M\subset \partial B_N(p,r)$, then
$M$ extends smoothly across $p$.
\end{enumerate}
\end{theorem}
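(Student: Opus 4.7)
The plan is to prove both implications separately, with the bulk of the work going into sufficiency. Necessity is essentially a local regularity statement: if the weak $H$-lamination $\mathcal L$ extends to $B_N(p,r)$, then at most two CMC leaves pass through $p$, each is smooth there, and any other leaf close to $p$ is a smooth CMC graph over one of them (or fits into the foliation picture controlled by the maximum principle). Consequently $|\sigma_{\mathcal L}|$ is bounded on a neighborhood of $p$, and the inequality $|\sigma_{\mathcal L}|\, d_N(p,\cdot)\le C$ holds trivially on any small enough subball.

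For sufficiency I would run a blow-up/rescaling argument. Using normal coordinates around $p$, I can pretend without loss of generality that $N$ is flat Euclidean near $p$ up to errors controlled by the absolute sectional curvature of $N$. Consider homothetic rescalings of the pointed manifold $(N,p)$ by factors $1/\lambda_n$ with $\lambda_n\to 0$; each rescaled metric converges smoothly on compact sets to the flat $\mathbb R^3$. The pulled-back weak $H$-laminations $\mathcal L_n$ are weak $H_n$-laminations with $H_n=\lambda_n H\to 0$, and the hypothesis $|\sigma_{\mathcal L}|\, d_N(p,\cdot)\le C$ is scale-invariant, so $|\sigma_{\mathcal L_n}|\,|x|\le C$ uniformly. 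Hence on every annulus $\B(R_1,R_2)\subset \mathbb R^3-\{\vec 0\}$ the second fundamental forms are uniformly bounded, and standard compactness for weak $H$-laminations with locally bounded curvature yields a subsequential limit $\mathcal L_\infty$ which is a \emph{weak minimal lamination} of $\mathbb R^3-\{\vec 0\}$ still satisfying $|\sigma_{\mathcal L_\infty}|\, R\le C$.

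The Local Removable Singularity Theorem for weak minimal laminations from \cite{mpr10} then applies to $\mathcal L_\infty$, so $\mathcal L_\infty$ extends across $\vec 0$ to a weak minimal lamination of $\B(r)$. The main obstacle, and what I expect to be the heart of the argument, is upgrading this limiting statement into a uniform curvature bound for the original $\mathcal L$ near $p$: I would argue by contradiction, assuming the existence of a sequence $q_n\to p$ with $|\sigma_{\mathcal L}|(q_n)\, d_N(p,q_n)=c_n$ either tending to $C$ along a non-trivial direction or producing a non-planar blow-up leaf. Rescaling to bring $q_n$ to unit distance from the origin and passing to a limit, I would obtain a non-flat leaf of $\mathcal L_\infty$ passing through a fixed point of $\esf^2$ with the extremal curvature-times-distance ratio; comparing with the extension of $\mathcal L_\infty$ obtained from \cite{mpr10} (whose leaves through $\vec 0$ are smooth and flat to first order) gives the contradiction, and yields the boundedness claimed in item (1).

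Once $|\sigma_{\mathcal L}|$ is bounded on a neighborhood $U$ of $p$, leaves of $\mathcal L$ approaching $p$ are uniform graphs with uniform $C^{1,\alpha}$ bounds, and the CMC equation together with Schauder theory gives $C^{2,\alpha}$ estimates; a standard diagonal compactness argument then produces a weak $H$-lamination of $U$ (with at most two leaves through $p$) that agrees with $\mathcal L$ on $U-\{p\}$, which is the desired extension. For item (2), applying the extension to the single-leaf lamination $\mathcal L=\{M\}$ produces a weak $H$-lamination $\mathcal L'$ of $B_N(p,r)$ whose restriction to $B_N(p,r)-\{p\}$ coincides with $M$; properness of $M$ and $\emptyset\ne\partial M\subset \partial B_N(p,r)$ force $\mathcal L'$ to consist of a single leaf, namely the smooth closure of $M$, giving the smooth extension of $M$ across $p$.
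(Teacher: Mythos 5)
Your overall strategy---blow up at $p$ to reduce to the minimal case of the Local Removable Singularity Theorem of~\cite{mpr10}---is the same as the paper's, but the step you yourself identify as ``the heart of the argument'' does not work as you describe it. You propose to obtain the curvature bound by producing a non-flat leaf of a blow-up limit ${\cal L}_\infty$ through a point of $\esf^2(1)$ and then deriving a contradiction from the fact that the extension of ${\cal L}_\infty$ across $\vec 0$ given by~\cite{mpr10} is ``smooth and flat to first order'' at the origin. There is no contradiction there: a minimal lamination of $\R^3$ that is smooth across $\vec 0$ can perfectly well be non-flat at distance $1$ from the origin (a single catenoid centered at $\vec 0$ satisfies $|\sigma|\,R\leq C$ on all of $\R^3-\{\vec 0\}$, extends smoothly across the origin, and is nowhere flat), and the non-flat leaf through $q_\infty\in\esf^2(1)$ need not even pass through $\vec 0$. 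What the paper actually does (Lemma~\ref{ass:flat}) is much more delicate: if some blow-up limit is non-flat, Corollary~6.3 of~\cite{mpr10} forces its closure to be a \emph{single} non-flat minimal surface of finite total curvature; a mean-curvature comparison with small distance spheres shows $\vec 0$ lies in its closure and that ${\cal L}$ near $p$ consists of a single properly embedded leaf with $p$ in its closure; and then Harvey--Lawson and Allard's monotonicity formula show that the tangent cone of that leaf at $p$ is a cone, flat at its smooth points---which is what contradicts the smooth non-flat blow-up limit. None of these ingredients appears in your proposal, and without them the conclusion that blow-up limits are laminations by parallel planes (property {\bf (P)} in the paper) is not established.

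There is a second, independent gap in your final paragraph. Even once $|\sigma_{\cal L}|$ is known to be bounded near $p$ and blow-up limits are parallel planes, bounded curvature plus local graphicality does not by itself yield an extension: one must analyze the global topology of the leaf components in dyadic annuli around $p$ (the paper's types 1--4), rule out infinitely spiraling multigraph leaves by applying the maximum principle to the two extended annular leaves they accumulate on (Assertion~\ref{ass5.5}), and prove that a proper annular leaf limiting to $p$ actually extends smoothly across $p$---in the paper this uses finite area, the conformal structure of a punctured disk, Gr\"{u}ter's regularity theorem for weak $H$-surfaces of finite energy, and a final Bernstein-theorem blow-up to get the uniform curvature bound of Assertion~\ref{ass5.6}. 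Your ``uniform $C^{1,\alpha}$ graphs plus Schauder plus diagonal compactness'' sentence presupposes exactly the structure that this analysis is needed to establish. Your necessity direction and the deduction of item~2 from the extension are fine.
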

We remark that in the case $H=0$, a weak
$H$-lamination is just a minimal lamination, see the first paragraph just after
Definition~\ref{definition}. In this way, Theorem~\ref{tt2} generalizes
the minimal case of the Local Removable Singularity Theorem proven in~\cite{mpr10}.

Besides the above Local Removable Singularity Theorem, a second key ingredient is needed in
the proof of Theorem~\ref{thmspheresintrod}: a universal scale-invariant curvature estimate valid
for any CMC foliation of a compact Riemannian three-manifold with boundary, solely in terms
of an upper bound for its sectional curvature. The next result is inspired by
previous curvature estimates by Schoen~\cite{sc3} and Ros~\cite{ros9} for
compact stable minimal surfaces with boundary, and by
Rosenberg, Souam and Toubiana~\cite{rst1} for stable constant mean curvature surfaces.

\begin{theorem}[Curvature Estimates for CMC foliations]
\label{thm5.7}
There exists a positive constant $A>0$ such that the following statement holds.
Given $\Lambda \geq 0$, a compact Riemannian three-manifold $N$ with boundary
whose absolute sectional curvature is at most $\Lambda $, a weak $CMC$ foliation
${\cal F}$ of $N$ and a point $p\in \rm Int(N)$, we have
\[
|\sigma_{\cal F}|(p)\leq \frac{A}{\min\{{\rm dist}_N(p,\partial N),
\frac{\pi }{\sqrt{\Lambda }}\}},
\]
where $|\sigma _{\cal F}|\colon N\to [0,\infty )$ is the function that assigns to each $p\in N$
the supremum of the norms of the second fundamental forms of leaves of ${\cal F}$ passing through
$p$.
\end{theorem}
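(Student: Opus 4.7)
\emph{Plan.} My approach is a contradiction/blow-up argument, reducing the desired universal estimate to the nonexistence of a nontrivial weak CMC foliation of $\R^3$ without singularities, i.e., to the classical theorem of Meeks~\cite{me17} quoted in the introduction.

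\emph{Setup and point-picking.} Suppose no such constant $A$ exists. Then for each $n\in\N$ there exist $\Lambda_n\geq 0$, a compact Riemannian three-manifold $N_n$ with $|K_{N_n}|\leq \Lambda_n$, a weak CMC foliation $\cF_n$ of $N_n$, and a point $p_n\in \mathrm{Int}(N_n)$ such that
\[
|\sigma_{\cF_n}|(p_n)\cdot \min\{d_{N_n}(p_n,\partial N_n),\,\pi/\sqrt{\Lambda_n}\}\longrightarrow \infty
\]
(with the convention $\pi/\sqrt{0}=\infty$). A standard Choi--Schoen type point-picking applied to the function $q\mapsto |\sigma_{\cF_n}|(q)\cdot \min\{d_{N_n}(q,\partial N_n),\pi/\sqrt{\Lambda_n}\}$ yields new base points $q_n\in \mathrm{Int}(N_n)$ and radii $r_n>0$ satisfying $\overline{B}_{N_n}(q_n,r_n)\subset \mathrm{Int}(N_n)$, $r_n\leq \pi/(2\sqrt{\Lambda_n})$, $\lambda_n:=|\sigma_{\cF_n}|(q_n)\to\infty$, $\lambda_n r_n\to \infty$, and $|\sigma_{\cF_n}|\leq C\lambda_n$ on $\overline{B}_{N_n}(q_n,r_n)$ for some universal $C>0$. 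Rescaling each metric by the factor $\lambda_n^2$, the rescaled manifold $(N_n,\widetilde g_n)$ has sectional curvatures bounded by $\Lambda_n/\lambda_n^2$, which tends to zero (combine $r_n\leq \pi/(2\sqrt{\Lambda_n})$ with $\lambda_n r_n\to\infty$ to deduce $\lambda_n/\sqrt{\Lambda_n}\to\infty$). The rescaled foliation $\widetilde{\cF}_n$ satisfies $|\widetilde\sigma_{\widetilde{\cF}_n}|\leq C$ on the $\widetilde g_n$-ball $\overline{B}_{\widetilde g_n}(q_n,\lambda_n r_n)$ of diverging radius, with $|\widetilde\sigma_{\widetilde{\cF}_n}|(q_n)=1$, and the mean curvatures of its leaves are bounded by $C/\sqrt{2}$.

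\emph{Limit and contradiction.} By standard compactness theorems for CMC surfaces of uniformly bounded second fundamental form in pointed Riemannian three-manifolds of bounded geometry, we extract a pointed subsequential limit, a weak CMC foliation $\cF_\infty$ of $\R^3$ with $|\sigma_{\cF_\infty}|(0)=1$ and $|\sigma_{\cF_\infty}|\leq C$ everywhere. By Meeks's classical theorem~\cite{me17} for CMC foliations of $\R^3$ without singularities, extended to the weak CMC setting, $\cF_\infty$ must be a foliation by parallel planes, whence $|\sigma_{\cF_\infty}|\equiv 0$, contradicting $|\sigma_{\cF_\infty}|(0)=1$. The principal technical obstacle is the extraction of $\cF_\infty$ as a weak CMC foliation of \emph{all} of $\R^3$, not merely a weak lamination of some proper subset: one must verify that every $q\in\R^3$ is the pointed limit of leaf points $q_n'\in \cF_n$ whose leaves subconverge smoothly to a leaf of $\cF_\infty$ through $q$. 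This follows from the foliation property of each $\cF_n$ together with the uniform curvature bound $|\widetilde\sigma_{\widetilde{\cF}_n}|\leq C$, which supplies uniform graphical representations of the leaves near $q_n'$. A secondary point is the application of Meeks's theorem in the weak setting: at any tangential intersection of two distinct leaves with equal mean curvature, the strong maximum principle forces the leaves to coincide, while two tangential leaves with different constant mean curvatures impose orientation constraints that cannot be compatible with a full foliation of $\R^3$ having bounded second fundamental form and no singular set, so the classical statement applies as needed.
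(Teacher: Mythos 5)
Your overall strategy coincides with the paper's: argue by contradiction, run a point-picking argument on $q\mapsto |\sigma_{{\cal F}_n}|(q)\,d_{N_n}(q,\partial N_n)$, rescale by the curvature at an (almost) maximizing point so that the ambient geometry flattens out, and extract a limit weak CMC foliation of $\R^3$ with $|\sigma|(\vec{0})=1$ and uniformly bounded second fundamental form. The technical points you flag (non-continuity of $|\sigma_{\cal F}|$, and the limit being a foliation of all of $\R^3$ rather than a lamination of a proper subset) are real and are handled in the paper essentially as you indicate, via harmonic coordinates, the uniform graph lemma and Schauder estimates. The gap is in your final step. You close the argument by invoking Meeks's theorem~\cite{me17} for CMC foliations of $\R^3$ ``extended to the weak CMC setting,'' but that extension is precisely the non-trivial point: in a weak CMC foliation, distinct leaves (or a leaf with itself) may intersect tangentially with opposite mean curvature vectors (Definition~\ref{definition}), and your one-sentence claim that such tangencies ``cannot be compatible with a full foliation of $\R^3$ having bounded second fundamental form and no singular set'' is an assertion, not a proof. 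Note that the maximum principle does not separate two tangent leaves of different constant mean curvature --- the comparison principle only constrains the sign of the difference of the mean curvatures, and locally such configurations are admissible; excluding them globally in $\R^3$ is essentially the ${\cal S}=\mbox{\O}$ case of Theorem~\ref{thmspheres}, which the paper proves \emph{using} Theorem~\ref{thm5.7}. So as written your argument is either circular or is missing its decisive step.

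The paper closes the blow-up in a self-contained way that you could adopt without changing anything else in your proposal. Since $|\sigma_{\cal Z}|\leq 1$ on the limit foliation ${\cal Z}$, the absolute mean curvatures of its leaves are uniformly bounded. If ${\cal Z}$ were minimal, every leaf would be a limit leaf of a minimal foliation, hence stable by~\cite{mpr18} and therefore a plane, contradicting $|\sigma_{\cal Z}|(\vec{0})=1$. Otherwise, after translating ${\cal Z}$ and taking a further limit one obtains a weak CMC foliation of $\R^3$ with a leaf $\wh{L}$ through the origin attaining the non-zero maximal mean curvature among all leaves; by Proposition~5.4 of~\cite{mpr19} such a leaf is stable, and being complete it must be a plane (Lemma~\ref{lema1}), contradicting that its mean curvature is non-zero. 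Replacing your appeal to~\cite{me17} by this stability argument repairs the proof.
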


The above curvature estimate is an essential tool for analyzing the structure of a weak CMC
foliation of a small geodesic Riemannian three-ball punctured at its center.  Among other things we
prove that if the mean curvatures  of the leaves of such a weak CMC foliation  are bounded in a neighborhood
of the puncture, then  the weak CMC foliation extends
across the puncture to a weak CMC foliation of the ball.

A global application of the above theorem is that a compact, orientable Riemannian three-manifold not diffeomorphic to
the three-sphere $\esf^3$ or to the real projective three-space $\mathbb{P}^3$
does not admit any weak CMC foliation with a non-empty countable closed set of singularities; see~\cite{mpe14} for this and other related results.
In a different  direction, it is natural to ask whether every closed, orientable three-manifold admits a Riemannian metric together with a
smooth CMC foliation. The existence of such foliations follows from the next main theorem in~\cite{mpe13} and the facts that
every closed three-manifold has a differentiable structure and the Euler characteristic of
any closed manifold of odd dimension is always zero; the proof of this theorem
relies heavily on the seminal works of Thurston~\cite{th3}
on the existence of smooth codimension-one foliations of smooth closed $n$-manifolds and  of
 Sullivan~\cite{sul1}, which explains  when such foliations are minimal with respect to some Riemannian metric.

\begin{theorem} \label{main}
A smooth closed orientable $n$-manifold $X$ admits a smooth CMC foliation $\cF$
for some Riemannian metric  if and only if its Euler characteristic  is zero. Furthermore,
$\cF$ can be chosen so that the mean curvature function of its leaves changes sign.
\end{theorem}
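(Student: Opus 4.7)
The plan is to prove both directions, with sufficiency relying on the theorems of Thurston and Sullivan cited just before the statement.

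\textbf{Necessity.} Suppose $X$ admits a smooth CMC foliation $\cF$. As noted earlier in the paper, after passing to at most a four-sheeted cover $\wt X$ the foliation lifts to a transversely orientable one, yielding a continuous nowhere-vanishing vector field normal to the leaves on $\wt X$. The Poincar\'e--Hopf theorem gives $\chi(\wt X)=0$, and since the Euler characteristic is multiplicative in finite covers, $\chi(X)=0$.

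\textbf{Sufficiency.} Assume $\chi(X)=0$. By Thurston's theorem $X$ admits a smooth codimension-one foliation $\cF_0$, which can be arranged transversely orientable (using that $X$ is orientable) and topologically taut. Sullivan's theorem then produces a Riemannian metric $g_0$ in which every leaf of $\cF_0$ is minimal; already $(X,g_0,\cF_0)$ is a smooth CMC foliation with $H\equiv 0$.

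To enhance this to a CMC foliation whose mean curvature changes sign, I would perturb $g_0$ in a flow box around a carefully chosen closed transversal $\tau$. In Fermi-type coordinates $(x,t)$ inside the flow box, $g_0=g_t+dt^2$; I would replace $g_0$ by $g=e^{2\phi(t)}g_t+dt^2$ for a smooth function $\phi$ of compact support such that $\phi'$ takes both positive and negative values. A direct calculation shows that in $g$ the mean curvature of the slice $\{t=t_0\}$ is a fixed constant multiple of $\phi'(t_0)$, constant along the slice, so every leaf remains CMC; leaves not meeting the support of $\phi$ remain minimal.

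\textbf{Main obstacle.} The delicate point is that a leaf of $\cF_0$ may re-enter the flow box at different transverse heights, making the local shift in $H$ non-constant along the leaf. To handle this, one must choose $\phi$ compatible with the holonomy return map along $\tau$, so that every return of a leaf is assigned the same value of $\phi'$; equivalently, $\phi$ should descend to a well-defined smooth function on the orbit space of the return map. Combining this compatibility with the requirement that $\phi'$ change sign, and verifying that the resulting global metric gives a smooth CMC foliation with sign-changing mean curvature, constitutes the technical core of the argument and is carried out in detail in~\cite{mpe13}.
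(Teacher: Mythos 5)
First, a point of reference: the paper does not actually prove Theorem~\ref{main}. It is imported wholesale from~\cite{mpe13} (a work in progress), with only the remark that the proof relies on Thurston~\cite{th3} and Sullivan~\cite{sul1}. So there is no in-paper argument to compare against; judged on its own terms, your proposal has a correct necessity direction but a genuine gap in the sufficiency direction.

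The gap is your assertion that Thurston's foliation ``can be arranged \ldots topologically taut.'' This is false in general. By Novikov's theorem, every smooth codimension-one foliation of $\esf^3$ contains a Reeb component and hence is not taut, so no foliation of $\esf^3$ is minimal for any Riemannian metric; yet $\chi(\esf^3)=0$, so Theorem~\ref{main} must produce a CMC foliation of $\esf^3$ for some metric. Your route through Sullivan therefore cannot even begin on $\esf^3$ (and on many other manifolds with $\chi=0$ that admit no taut foliation). Whatever~\cite{mpe13} does, it must deal with non-taut foliations --- for instance, endow Reeb components with metrics making their leaves CMC with nonzero mean curvature --- and that is precisely the content missing from your argument. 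Second, even where tautness is available, your mechanism for making $H$ change sign is not closed up: the ``compatibility with the holonomy return map'' that you defer to~\cite{mpe13} is not a routine technicality. If the chosen transversal meets a leaf in a recurrent (for example dense) set of heights, the only functions $\phi'$ that descend to the orbit space of the return map are constants, which your compact-support requirement then forces to vanish; the perturbation must instead be localized in a region where the foliation is a product (near a closed leaf, or in a trivially foliated flow box not revisited by its leaves), and producing such a region is again part of the structure theory you have not supplied. In short, both halves of the sufficiency proof are outsourced to the very reference the argument was meant to replace.
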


The paper is organized as follows. In Section~\ref{sec3} we
extend to $H$-surfaces the Stability Lemma proved in~\cite{mpr10} for the minimal case.
In Section~\ref{sec10} we discuss some regularity aspects of $H$ and CMC laminations,
and define weak $H$ and CMC laminations. Section~\ref{sec4} contains the proof of Theorem~\ref{thm5.7}.
Section~\ref{secprooftt2} is devoted
to prove the Local Removable Singularity Theorem~\ref{tt2} based
on the previously proven minimal case (Theorem~1.1 in~\cite{mpr10}). In Section~\ref{sectspheres}
 we demonstrate Theorem~\ref{thmspheresintrod}. Finally,
in Section~\ref{sec:localstructure} we apply a rescaling argument and
Theorem~\ref{thmspheresintrod} to obtain Theorem~\ref{corol6.6} which describes
the structure of any singular CMC foliation of a Riemannian three-manifold in
a small neighborhood of any of its isolated singular points.



\section{Stable surfaces with constant mean curvature which are complete outside of a point.}
\label{sec3}

In~\cite{mpr10} we extended the classical characterization of planes
as the only orientable, complete, stable
minimal surfaces in $\R^3$ (do Carmo and Peng~\cite{cp1},
Fischer-Colbrie and Schoen~\cite{fs1},
Pogorelov~\cite{po1}) to the case that completeness is only required outside
a point, and called this result the {\it Stability Lemma}\footnote{
This minimal stability lemma was found independently by Colding and Minicozzi~\cite{cm25}.}.
This result was a crucial tool in the proof of the minimal case of the Local Removable Singularity
Theorem (Theorem~~1.1 in~\cite{mpr10}). Next we extend the Stability Lemma
to the case of constant mean curvature surfaces, although this extension will not
be directly used in the proof of Theorem~\ref{tt2}: rather than this, the proof of
Theorem~\ref{tt2} will be based on the validity of the minimal case of the
Local Removable Singularity Theorem proven in~\cite{mpr10}, which in turn only
uses the minimal case of the Stability Lemma. We remark that the
notion of stability used in this paper is that the
first eigenvalue of the stability operator for
compactly supported variations is non-negative, in contrast with
the usual stability notion related to the isoperimetric problem,
where only compactly supported variations that preserve
infinitesimally the volume are considered.

\begin{definition}
\label{defcompl}
{\rm An immersed surface $M\subset \R^3-\{ \vec{0}\} $ is
{\it complete outside the origin,} if every divergent path in $M$ of
finite length has as limit point the origin. }
\end{definition}

Let $x\colon M\rightarrow \R^3$ be an isometric immersion of an orientable surface
$M$. The mean curvature function $H$ of $x$ is constant with value $c\in \R $ (in short, $x(M)$ is an immersed
$c$-surface) if and only if for all smooth compact subdomains of $M$, $x$ restricted to the subdomain
is a critical point of the functional $\mbox{Area}-2c\, \mbox{Volume}$.
Given an $H$-surface
$x(M)$ and a function $f\in C^{\infty }_0(M)$, the second variation of
$\mbox{Area}-2H\, \mbox{Volume}$ for any compactly supported variation of $x$ whose normal
component of the variational field is $f$, is well-known to be
\[
 Q(f,f)=-\int _M fLf\, dA ,
 \]
where $L= \Delta +|\sigma |^2=\Delta -2K+4H^2$ is the {\it Jacobi operator} on $M$  (here $|\sigma |$
is the norm of the second fundamental form and $K$ is the Gaussian curvature).
The immersion is said to be {\it stable}
if $-L$ is a non-negative operator on $M$, i.e., $Q(f,f)\geq 0$ for every $f\in C^{\infty }_0(M)$.
More generally, a Schr\"{o}dinger operator $-(\Delta +q)$ with $q\in C^{\infty }(M)$ is
called {\it non-negative} if
\[
\int _M (|\nabla f|^2-qf^2)\geq 0, \quad \mbox{for all }f\in C^{\infty }_0(M).
\]

\begin{lemma} [Stability Lemma for $H$-surfaces]
\label{lema1}
Let $M\subset \R^3-\{ \vec{0}\} $ be a stable, immersed
constant mean curvature (orientable if minimal) surface,
which is complete outside the origin. Then, the closure $\overline{M}$ of $M$ is a plane.
\end{lemma}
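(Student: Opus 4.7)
The plan is to follow the conformal metric strategy used for the minimal case in~\cite{mpr10}, with the new input being the elementary bound $K\le H^2$ on the Gauss curvature of an $H$-surface, which absorbs the extra $4H^2$ term appearing in the Jacobi operator $L=\De +|\sigma |^2$.

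First, stability means that $-L$ is a non-negative operator on $M$, so the Fischer--Colbrie/Moss--Piepenbrink criterion provides a positive $u\in C^\infty (M)$ with $Lu=0$. I would then perform the conformal change $\widetilde g=u^2g$ on $M$. Using $|\sigma |^2=4H^2-2K$ and the Jacobi equation $\De u=(2K-4H^2)u$, a direct computation gives
\[
\widetilde K\, u^2 \;=\; K-\De \log u \;=\; -K+4H^2+|\nabla \log u|^2.
\]
Since $(\l _1+\l _2)^2\ge 4\l _1\l _2$ forces $K\le H^2$ on any $H$-surface, the right-hand side is bounded below by $3H^2+|\nabla \log u|^2\ge 0$, so $(M,\widetilde g)$ has non-negative Gauss curvature. (When $H=0$ one recovers the setup of \cite{mpr10} since then $-K=\tfrac12|\sigma |^2\ge 0$.)

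The crucial point is to prove that $(M,\widetilde g)$ is complete. Divergent paths in $M$ that stay a definite distance away from $\vec 0$ are already $g$-complete by hypothesis, so only divergent paths $\g$ whose image in $\R ^3$ accumulates at $\vec 0$ need to be controlled, and for these one must show $\int _{\g} u\, ds =\infty $. Here I would take $u$ to be the \emph{minimal} positive Jacobi function, obtained as a limit of principal Dirichlet eigenfunctions on an exhaustion of $M$ by relatively compact smooth subdomains, and then adapt the logarithmic cutoff / Harnack argument of \cite{mpr10} to force $u$ to grow fast enough near the isolated puncture $\vec 0$ so that every such $\g $ has infinite $\widetilde g$-length. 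This a priori control of $u$ at the puncture is the technically most delicate step, and the only place where the hypothesis ``complete outside $\vec 0$'' is really used.

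Once $(M,\widetilde g)$ is a complete surface with $\widetilde K\ge 0$, its conformal type is parabolic, and combining Huber's theorem with a Cohn--Vossen/Gauss--Bonnet analysis at its ends forces $\widetilde K\equiv 0$. Substituting back into $\widetilde K\, u^2=-K+4H^2+|\nabla \log u|^2$ and using $-K+4H^2\ge 3H^2\ge 0$, we read off $H=0$, $K\equiv 0$ and $u$ constant. Hence $|\sigma |^2=-2K=0$, so $M$ is totally geodesic, and its closure $\overline M$ in $\R ^3$ is a plane.
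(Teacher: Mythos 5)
Your computation of $\widetilde K$ for $\widetilde g=u^2g$ and the observation $K\le H^2$ are both correct, but the argument has two genuine gaps, and the first one sits exactly at the crux of the lemma. The completeness of $(M,\widetilde g)$ is not something you can "adapt" from \cite{mpr10}: the proof there (reproduced in this paper) never controls a positive Jacobi function near the puncture. Instead it uses the \emph{geometric} conformal factor $\widetilde g=R^{-2}g$, where $R$ is the distance to $\vec 0$ in $\R^3$; since $(\R^3-\{\vec 0\},R^{-2}\langle\, ,\rangle)$ is isometric to $\esf^2(1)\times\R$, the origin is pushed to infinite distance and completeness of $(M,\widetilde g)$ follows for free from the hypothesis that $M$ is complete outside the origin. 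By contrast, for your choice $\widetilde g=u^2g$ you must show $\int_\g u\,ds=\infty$ along every path accumulating at $\vec 0$, and there is no a priori reason a positive Jacobi function should be bounded below (let alone blow up) near the puncture; you have identified this as the delicate step but supplied no mechanism for it, so the hypothesis "complete outside the origin" is never actually brought to bear.

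The second gap is the concluding step: "complete with $\widetilde K\ge 0$ forces $\widetilde K\equiv 0$" is false as stated (a paraboloid is complete with $K>0$). What Cohn--Vossen/Huber actually give is $\int_M\widetilde K\,d\widetilde A\le 2\pi\chi(M)<\infty$, hence, via your identity $\widetilde K u^2=-K+4H^2+|\nabla\log u|^2\ge \frac{1}{2}|\sigma|^2$, finite total curvature $\int_M|\sigma|^2\,dA<\infty$; turning that into flatness requires a further argument (e.g.\ a logarithmic cutoff in the stability inequality on a parabolic surface) that you do not supply. For comparison, the paper avoids both issues: after passing to $\widetilde g=R^{-2}g$ it rewrites stability as non-negativity of $\widetilde\Delta-2\widetilde K+q$ with $q\ge 0$ (your $K\le H^2$ step plays the same role as the discriminant computation showing $q\ge 0$), invokes Theorem~2.9 of \cite{mpr19} to get quadratic area growth, and then uses the Liouville-type result of \cite{mper1} that bounded solutions of $\widetilde\Delta v-2\widetilde K v+qv=0$ cannot change sign, contradicting the sign-changing bounded Jacobi function $\langle\eta,a\rangle$ on a non-flat $M$. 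I would recommend either switching to the $R^{-2}$ conformal change or finding a genuine proof of the growth of $u$ at the puncture before asserting the result.
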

\begin{proof}
If $\vec{0}\notin \overline{M}$, then $M$ is complete and so, it is
a plane~\cite{cp1,fs1,po1}.
Assume now that $\vec{0}\in\overline{M}$. Consider the
metric $\widetilde{g}=\frac{1}{R^2}g$ on $M$, where $g$ is the
metric induced by the usual inner product $\langle ,\rangle $ of
$\R^3$ and $R$ is the distance to the origin in $\R^3$.
Note that if $M$ were a plane through $\vec{0}$, then
$\widetilde{g}$ would be the metric on $M-\{ \vec{0}\} $ of an  infinite cylinder
of radius 1 with ends at $\vec{0}$ and at infinity.
Since $(\R^3-\{ \vec{0}\} ,\widehat{g})$ with
$\widehat{g}=\frac{1}{R^2}\langle ,\rangle $, is isometric to $\esf
^2(1)\times \R $, then $(M,\widetilde{g})\subset (\R^3-\{ \vec{0}\}
,\widehat{g})$ is complete.

The laplacians and Gauss curvatures of $g,\widetilde{g}$ are related
by the equations $\widetilde{\Delta }=R^2\Delta $ and $\widetilde{K}=R^2(K
+\Delta \log R)$. Thus, the stability of $(M,g)$ implies that the following
operator is non-negative on $M$:
\[
-\Delta +2K-4H^2=-\frac{1}{R^2}(\widetilde{\Delta }-2\widetilde{K}+q),
\]
where $q=2R^2\Delta \log R+4H^2R^2$. Since $\Delta \log R=\frac{2}{R^4}
(R^2\langle p,\eta \rangle H+\langle p,\eta \rangle ^2)$ where
$\eta $ is the unitary
normal vector field to $M$ (with respect to $g$) for which $H$ is the mean curvature, then
\begin{equation}
\label{eq:parabolaH}
\frac{1}{4}q=H^2R^2+\langle p,\eta \rangle H+\frac{\langle p,\eta \rangle ^2}{R^2}.
\end{equation}
Viewing the right-hand-side of (\ref{eq:parabolaH}) as a quadratic
polynomial in the
variable $H$, its discriminant is $-3\langle p,\eta \rangle ^2\leq
0$. Since the  coefficient of $H^2$ on the right-hand-side of
(\ref{eq:parabolaH}) is $R^2\geq 0$, we deduce that $q\geq 0$ on
$M$. Applying Theorem~2.9 in~\cite{mpr19} to the operator
$\widetilde{\Delta }-2\widetilde{K}+q$ on 
$(M,\wt{g})$, 
we deduce that $(M,\wt{g})$ 
has at most quadratic area growth. This last property implies that
that every bounded solution of the equation $\widetilde{\Delta }u-2\widetilde{K}u+qu=0$
has constant sign on $M$ 
(see Theorem~1 in Manzano, P\'erez and Rodr\'\i guez~\cite{mper1}).

Arguing by contradiction, suppose that $(M,g)$ is not flat. Then,
there exists a bounded Jacobi function $u$ on $(M,g)$ which changes
sign (simply take a point $p\in M$ and choose $u$ as $\langle
\eta ,a\rangle $ where $a\in \R^3$ is a
non-zero tangent vector to $M$ at $p$). Then clearly $u$ satisfies
$\widetilde{\Delta }u-2\widetilde{K}u+qu=0$ on $M$. 
This contradiction proves the lemma.
\end{proof}

\section{Weak $H$-laminations and CMC laminations.}
\label{sec10}

In this section we start by recalling the classical notion of lamination
and discuss some previous results on regularity of these objects when the
leaves have constant mean curvature. Then we will enlarge the class to admit
{\it weak} laminations by allowing certain tangential intersections between
the leaves. These weak laminations and foliations will be studied
in subsequent sections.

\begin{definition}
\label{deflamination}
{\rm
 A {\it codimension-one
lamination} of a Riemannian three-manifold $N$ is the union of a
collection of pairwise disjoint, connected, injectively immersed
surfaces, with a certain local product structure. More precisely, it
is a pair $({\mathcal L},{\mathcal A})$ satisfying:
\begin{enumerate}
\item ${\mathcal L}$ is a closed subset of $N$;
\item ${\mathcal A}=\{ \varphi _{\be }\colon \D \times (0,1)\to
U_{\be }\} _{\be }$ is an atlas of coordinate charts of $N$ (here
$\D $ is the open unit disk in $\R^2$, $(0,1)$ is the open unit
interval in $\R$ and $U_{\be }$ is an open subset of $N$); note that
although $N$ is assumed to be smooth, we
only require that the regularity of the atlas (i.e., that of
its change of coordinates) is of class $C^0$; in other words, ${\cal A}$
is an atlas with respect to the topological structure of $N$.
\item For each $\be $, there exists a closed subset $C_{\be }$ of
$(0,1)$ such that $\varphi _{\be }^{-1}(U_{\be }\cap {\mathcal L})=\D \times
C_{\be}$.
\end{enumerate}

We will simply denote laminations by ${\mathcal L}$, omitting the
charts $\varphi _{\be }$ in ${\mathcal A}$ unless explicitly necessary.
A lamination ${\mathcal L}$ is said to be a {\it foliation of $N$} if ${\mathcal L}=N$.
Every lamination ${\mathcal L}$  decomposes into a
collection of disjoint, connected topological surfaces (locally given by $\varphi
_{\be }(\D \times \{ t\} )$, $t\in C_{\be }$, with the notation
above), called the {\it leaves} of ${\mathcal L}$.
Note that if $\Delta
\subset {\cal L}$ is any collection of leaves of ${\cal L}$, then
the closure of the union of these leaves has the structure of a
lamination within ${\cal L}$, which we will call a {\it
sublamination.}

A codimension-one lamination ${\cal L}$ of $N$ is said to be a {\it CMC lamination}
if each of its leaves is smooth and has constant mean curvature (possibly varying
from leaf to leaf).
Given $H\in \R $, an {\it $H$-lamination} of $N$
is a CMC lamination all whose leaves have the same mean curvature
$H$. If $H=0$, the $H$-lamination is called a {\it minimal
lamination.}
 }
\end{definition}

Since the leaves of a lamination ${\cal L}$ are disjoint, it makes sense to consider the
second fundamental form $\sigma _{\cal L}$ as being defined on the union of the leaves.
A natural question to ask is whether or not the
norm $|\sigma _{\cal L}|$  of the second fundamental form of a
(minimal, $H$- or CMC) lamination ${\cal L}$ in a Riemannian
three-manifold is locally bounded. Concerning this question, we
make the following observations.
\begin{description}
\item[{\rm O.1.}] If ${\cal L}$ is a minimal lamination, then the 1-sided
curvature estimates for minimal disks by Colding and
Minicozzi~\cite{cm23,cm35} imply that $|\sigma _{\cal L}|$
is locally bounded (to prove this, one only needs to deal
with limit leaves, where the 1-sided curvature estimates apply).

\item[{\rm O.2.}]
As a consequence of recent work of Meeks and
Tinaglia~\cite{mt7} on curvature estimates for embedded disks of
positive constant mean curvature and a related 1-sided curvature estimate, a
CMC lamination ${\cal L}$ of a Riemannian three-manifold $N$ has
$|\sigma _{\cal L}|$ locally bounded.
\end{description}

Given a
sequence of CMC laminations ${\cal L}_n$ of a Riemannian
three-manifold  $N$ with uniformly bounded second fundamental form
on compact subsets of $N$, a simple application of the uniform graph lemma
for surfaces with constant mean curvature (see Colding and Minicozzi~\cite{cmCourant}
or P\'erez and Ros~\cite{pro2} from where this well-known result can be deduced)
and of the Arzel\`a-Ascoli Theorem, gives that there exists
a limit object of (a subsequence of) the ${\cal L}_n$,
which in general fails to be a CMC lamination since two ``leaves'' of this limit
object could intersect tangentially with mean curvature vectors pointing in opposite directions;
nevertheless, if ${\cal L}_n$ is a {\it minimal} lamination for every $n$, then
the maximum principle ensures that the limit object is indeed a minimal lamination,
see Proposition~B1 in~\cite{cm23} for a proof. Still, in the general case of
CMC laminations, such a limit object always
satisfies the conditions in the next definition.

\begin{definition}
\label{definition}
{\rm A (codimension-one) {\it weak CMC lamination} ${\cal L}$ of a
Riemannian three-manifold $N$ is a collection
$\{ L_\alpha\}_{\alpha\in I}$ of (not necessarily
injectively) immersed constant mean curvature surfaces
called the {\it leaves} of ${\cal L}$,
satisfying the following three properties.
\begin{enumerate}[1.]
\item $\bigcup_{\alpha\in I}L_{\alpha }$ is a closed
subset of $N$.
\item If $p\in N$ is a point where either two leaves of
${\cal L}$ intersect or a leaf of ${\cal L}$ intersects itself, then
each of these local surfaces at $p$ lies at one side of the other
(this cannot happen if both of the intersecting leaves have the same
signed mean curvature as graphs over their common tangent space at
$p$, by the maximum principle).

\item The function $|\sigma _{\cal L}|\colon {\cal L}\to [0,\infty )$ given by
\begin{equation}
\label{eq:sigma}
|\sigma _{\cal L}|(p)=\sup \{ |\sigma _L|(p)\ | \ L \mbox{ is a leaf of ${\cal L}$ with $p\in L$} \} .
\end{equation}
is uniformly bounded on compact sets of~$N$.
\end{enumerate}
Furthermore:
\begin{itemize}
\item If $N=\bigcup _{\alpha } L_{\a }$, then we call ${\cal
L}$ a {\it weak CMC foliation} of $N$.

\item If the leaves of ${\cal L}$ have the same constant mean curvature
$H$, then we call ${\cal L}$ a {\it weak $H$-lamination} of $N$ (or
$H$-foliation, if additionally $N=\bigcup _{\alpha } L_{\a }$).
\end{itemize}
}
\end{definition}

\begin{remark}
   {\em
   \begin{description}
   \item[]
   \item[{\rm 1.}] The function $|\sigma _{\cal L}|$ defined in (\ref{eq:sigma}) for a CMC lamination is not
   necessarily continuous, as demonstrated by the following
example: consider the union in $\R^3$ of $\Pi _0=\{ (x_1,x_2,x_3)\ | \ x_3=0\} $ and the sphere
$\esf ^2(p_1,1)$ where $p_1=(0,0,1)$. Also note that
this example can be modified to create a weak CMC foliation ${\cal F}$ of $\R^3$ minus a point with
non-continuous related function $|\sigma _{\cal F}|$: add to the previous example ${\cal L}$
all planes $\Pi _t=\{ x_1,x_2,t)\ | \ t<0\} $, and foliate the open set $\{ x_3>0\} -\{ (0,0,1)\} $ by
the spheres $\esf^2(p_t,t)$ with $t\geq 1$ where $p_t=(0,0,t)$,
together with the spheres $\esf^2(p_1,r)$, $r\in (0,1)$. In either case, $|\sigma _{\cal L}|$
or $|\sigma _{\cal F}|$ is not continuous at the origin.
 \item[{\rm 2.}] A weak $H$-lamination for $H=0$ is a minimal lamination in the
sense of Definition~\ref{deflamination}.
\item[{\rm 3.}] As a consequence of Observation~O.2 above, every CMC lamination
(resp. CMC foliation) of $N$ is a weak CMC lamination (resp. weak CMC foliation).
  \end{description}
  }
\end{remark}

The following proposition follows immediately from the definition of
a weak $H$-lamination and the maximum principle for $H$-surfaces.

\begin{proposition}
\label{prop10.2}
Any weak $H$-lamination
${\cal L}$ of a three-manifold $N$ has a local $H$-lamination
structure on the mean convex side of each leaf.  More precisely,
given a leaf $L_{\a }$ of ${\cal L}$ and given a small disk $\Delta
\subset L_{\alpha }$, there exists an $\ve >0$ such that if $(q,t)$
denotes the normal coordinates for $\exp _q(t\eta _q)$ (here $\exp $
is the exponential map of $N$ and $\eta $ is the unit normal vector
field to $L_{\a }$ pointing to the mean convex side of $L_{\a }$),
then the exponential map $\exp $ is an injective submersion in
$U(\Delta ,\ve ):= \{ (q,t) \ | \ q\in \mbox{\rm Int}(\Delta ), \, t\in
(-\ve ,\ve )\} $, and the inverse image $\exp^{-1}({\cal L})\cap \{
q\in \mbox{\rm Int}(\Delta ), \,t\in [0,\ve )\} $ is an $H$-lamination of
$U(\Delta ,\ve $) in the pulled back metric, see
Figure~\ref{figHlamin}.
\begin{figure}
\begin{center}
\includegraphics[width=5.1cm,height=4cm]{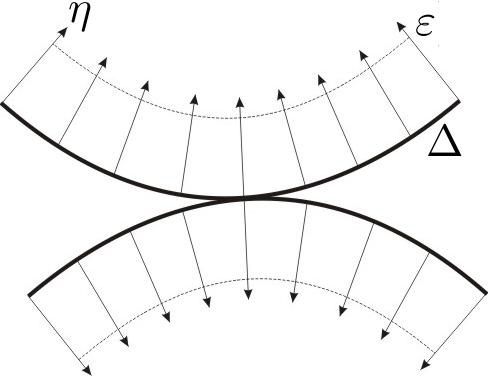}
\caption{The leaves of a weak $H$-lamination with $H\neq 0$ can
intersect each other or themselves, but only tangentially
with opposite mean curvature vectors. Nevertheless,
on the mean convex side of these locally intersecting leaves,
there is a lamination structure.
}
\label{figHlamin}
\end{center}
\end{figure}
\end{proposition}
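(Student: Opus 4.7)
The plan is to combine the standard tubular-neighborhood construction, the uniform graph representation of CMC surfaces with bounded second fundamental form, and a maximum-principle analysis of tangential intersections that exploits the at-most-two-leaves-through-a-point property mentioned just before Theorem~\ref{tt2}. For $H=0$ the conclusion is already contained in the definitions: a weak $H$-lamination with $H=0$ is a minimal lamination in the sense of Definition~\ref{deflamination}, and nothing more is to prove. So assume $H>0$.

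Since $L_\alpha$ is smoothly immersed and $\eta$ is a smooth unit normal field along it, the tubular-neighborhood theorem gives $\varepsilon_0>0$ with $\exp$ a diffeomorphism on $U(\Delta,\varepsilon_0)$. Using item~3 of Definition~\ref{definition} (uniform bound on $|\sigma_{\mathcal L}|$) together with the uniform graph lemma for CMC surfaces (Colding--Minicozzi, P\'erez--Ros~\cite{pro2}), after decreasing $\varepsilon$ every leaf of ${\mathcal L}$ meeting $\exp(U(\Delta,\varepsilon))$ is locally a smooth graph $t=u(q)$ in the normal coordinates with uniform $C^\infty$-bounds. The crux is then to show that after one further decrease of $\varepsilon$ these lifted graphs are pairwise disjoint on $\{t\geq 0\}$; this combined with the graph representation yields the local product structure of Definition~\ref{deflamination}, and each graph has mean curvature $H$ by construction, so the proposition follows.

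By Definition~\ref{definition} any intersection of two distinct lifted graphs in $\{t\geq 0\}$ is tangential with opposite mean-curvature vectors. At the common critical point of the two graph functions a direct computation gives $\tfrac12\Delta u=\pm H$ depending on whether the mean-curvature vector points in $\pm\partial_t$; subtracting, the Laplacian of the difference equals $4H>0$ there, so a Taylor expansion forces the ``upward'' graph to lie locally above the ``downward'' one at the tangency. In particular, at a tangent point at height $t_0=0$ the partner leaf of $L_\alpha$ sits in $\{t\leq 0\}$ near the tangency, and contributes nothing new to $\exp^{-1}({\mathcal L})\cap\{t\geq 0\}$ beyond the isolated tangent point, already on the lift of $L_\alpha$. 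To exclude tangencies at $t_0>0$, argue by contradiction: if such tangencies existed at heights $t_n\searrow 0$ between distinct leaves $L_{\beta,n}$ (upward) and $L_{\gamma,n}$ (downward), the uniform $C^\infty$ graph bounds and the closure property of ${\mathcal L}$ produce, up to a subsequence, leaves $L_{\beta,\infty},L_{\gamma,\infty}\in{\mathcal L}$ meeting tangentially at some $(q_\infty,0)\in L_\alpha$ with opposite mean-curvature vectors. Since the three leaves $L_\alpha,L_{\beta,\infty},L_{\gamma,\infty}$ all pass through $(q_\infty,0)$ and the at-most-two-leaves property permits at most two, and since $L_{\gamma,\infty}$'s mean-curvature vector is opposite to the upward vector shared by $L_\alpha$ and $L_{\beta,\infty}$, one must have $L_{\beta,\infty}=L_\alpha$ (consistent with the maximum principle for tangent $H$-surfaces of equal mean-curvature vector). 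Thus any limit configuration is a kissing of $L_\alpha$ with a partner leaf $L_{\gamma,\infty}$; the set of such kissing points of $L_\alpha$ with leaves of ${\mathcal L}$ forms a (typically discrete) analytic subset of $L_\alpha$, so after possibly shrinking $\Delta$ to avoid them in its interior (in the degenerate case where $\Delta$ is centered at such a kissing point, the separate argument just above already handles the partner leaf) and then taking $\varepsilon$ below the infimum of the remaining tangent heights, no tangencies survive inside $U(\Delta,\varepsilon)\cap\{t\geq 0\}$, completing the proof. The hard step is clearly this limit argument, which depends crucially on all three of the closure property of ${\mathcal L}$, the uniform curvature bound from Definition~\ref{definition}, and the at-most-two-leaves-through-a-point property.
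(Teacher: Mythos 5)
Your setup is sound and matches what the paper's one\--line justification (``follows immediately from the definition of a weak $H$\--lamination and the maximum principle'') is gesturing at: the normal coordinates, the uniform graph representation coming from the curvature bound in item~3 of Definition~\ref{definition}, the observation that at any tangential contact of two distinct lifted graphs the upper one must have upward and the lower one downward mean curvature vector, and hence that a leaf kissing $L_{\alpha}$ at $t=0$ lies in $\{t\leq 0\}$ and contributes nothing to $\{t>0\}$. (One small imprecision there: $\Delta(u_1-u_2)=4H>0$ at the contact point does not by itself force one graph above the other --- a saddle is consistent with it --- you need to also invoke the one\--sidedness required by item~2 of Definition~\ref{definition}.) The genuine gap is in your last step, the exclusion of tangencies at heights $t_0>0$. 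Your limit argument correctly extracts, from tangencies at $(q_n,t_n)$ with $t_n\searrow 0$, a pair of limit surfaces tangent at $(q_\infty,0)$ with opposite mean curvature vectors, but you then classify this as an admissible ``kissing of $L_{\alpha}$ with a partner leaf'' and try to escape by shrinking $\Delta$ away from kissing points. That escape is both unjustified and insufficient: the set of contact points of $L_{\alpha}$ with \emph{all} of its partner leaves is a union of contact sets over an a priori infinite family of leaves, and there is no reason it is an analytic, let alone discrete, subset of $L_{\alpha}$; the proposition is quantified over every given small disk $\Delta$, so you are not free to shrink it past such points; and your parenthetical for the degenerate case only disposes of the partner leaf itself, not of tangencies between \emph{other} pairs of leaves accumulating at the kissing point, which is exactly what had to be excluded.

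The irony is that your limit configuration already contains the contradiction, so the detour is unnecessary. The lower participant $L_{\gamma,n}$ of each tangency has downward mean curvature vector and a point at height $t_n>0$; by your own one\--sidedness and comparison analysis it cannot touch $L_{\alpha}$ (it would then be the upper surface at that contact and would need upward mean curvature vector), so its graph function is strictly positive over $\Delta$. Hence the limit surface has graph function $\geq 0$ vanishing at $q_\infty$: it touches $L_{\alpha}$ \emph{from the mean convex side} with mean curvature vector pointing toward $L_{\alpha}$. The comparison principle (the upper surface at a tangency has the larger upward mean curvature) then gives $-H\geq H$, a contradiction. This shows that for $\varepsilon$ small no leaf with downward mean curvature vector meets $\{0<t<\varepsilon\}$ over a compact subdisk at all; since every tangency requires a downward participant, all tangencies in $\{t>0\}$ are eliminated at once, with no need to locate or avoid kissing points, and the argument only uses that the limit pieces are $H$\--surfaces, not that they are leaves. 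With this replacement of the final step your proof closes; since the paper supplies no written proof of Proposition~\ref{prop10.2}, this is the level of detail one would want on record.
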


\begin{definition}
\label{def-limset}
 {\rm
Let $M$ be a complete, embedded surface in a Riemannian three-manifold $N$. A
point $p\in N$ is a {\it limit point} of $M$ if there exists a
sequence $\{p_n\}_n\subset M$ which diverges to infinity in $M$ with
respect to the intrinsic Riemannian topology on $M$ but converges in
$N$ to $p$ as $n\to \infty$. Let lim$(M)$ denote the set of all limit
points of $M$ in $N$; we call this set the {\it limit set of $M$}.
In particular, lim$(M)$ is a closed subset of $N$ and $\overline{M} -M
\subset \lim (M)$, where $\overline{M}$ denotes the closure of~$M$.

The above notion of limit point can be extended to the case of
a lamination ${\cal L}$ of $N$ as follows:
A point $p\in \mathcal{L}$ is a {\it limit point} if there exists a coordinate chart
$\varphi _{\beta }\colon \D \times (0,1)\to U_{\be }$ as in Definition~\ref{deflamination}
such that $p\in U_{\be }$ and $\varphi _{\be }^{-1}(p)=(x,t)$ with
$t$ belonging to the accumulation set of $C_{\be }$.
The notion of limit point can be also extended to the case of a weak $H$-lamination of $N$,
by using that such an weak $H$-lamination has a local lamination structure at the mean convex side
of any of its points, given by Proposition~\ref{prop10.2}.
It is easy to show that if $p$ is a limit point of
a lamination ${\cal L}$ (resp. of a weak $H$-lamination), then the leaf $L$ of ${\cal L}$
passing through $p$ consists entirely of limit points of ${\cal L}$;
in this case, $L$ is called a {\it limit leaf} of ${\cal L}$.}
\end{definition}

\section{Proof of Theorem~\ref{thm5.7}.}
\label{sec4}

We now prove the universal curvature estimates for weak CMC foliations stated in
Theorem~\ref{thm5.7}. Let $N$ be a compact Riemannian three-manifold possibly with boundary,
whose absolute sectional curvature is at most $\Lambda \geq 0$. Let ${\cal F}$ be
a weak CMC foliation of $N$ and $p\in \rm Int(N)$. Recall that $|\sigma _{\cal F}|$ is the function
defined in (\ref{eq:sigma}).

Note that $|\sigma_{\cal F}|(p)\min\{{\rm dist}_N(p,\partial N), \frac{\pi }{\sqrt{\Lambda }}\}$ is invariant
under rescaling of the ambient metric. This invariance implies that we can fix $\Lambda
>0$ and prove that $|\sigma_{\cal F}|(p)\min\{{\rm dist}_N(p,\partial N), \frac{\pi }{\sqrt{\Lambda }}\}$
is bounded independently of the compact Riemannian three-manifold $N$ with boundary
whose absolute sectional curvature is at most $\Lambda $ and independently of the weak CMC foliation ${\cal F}$ of $N$.

We fix $\Lambda >0$. Arguing by contradiction, assume that there exists a sequence of
weak CMC foliations ${\cal F}_n$ of compact three-manifolds $N_n$ with
boundary, such that the absolute sectional curvature of $N_n$ is at
most $\Lambda $, and there exists a sequence of points $p_n$ in leaves
$L_n$ of ${\cal F}_n$ with
$|\sigma _{L_n}|(p_n)\min \{ r_n,\frac{\pi }{\sqrt{\Lambda}}\} \geq n$
for all $n$, where  $r_n=d_{N_n}(p_n,\partial N_n)$.
After replacing $r_n$ by $\min \{ r_n,\frac{\pi }{\sqrt{\Lambda}}\} $
and $N_n$ by the closed metric ball $\overline{B}_{N_n}(p_n,r_n)$,
we can assume $r_n\leq \frac{\pi }{\sqrt{\Lambda }}$.

Since the sectional curvature of $N_n$ is at most $\Lambda $, a standard comparison argument
for zeros of Jacobi fields on geodesics gives that the exponential map
\[
\exp _{p_n}\colon B(\vec{0},r_n)\subset T_{p_n}N_n\to B_{N_n}(p_n,r_n)
\]
is a local diffeomorphism from the ball of radius $r_n$ centered at the origin in the
tangent space to $N_n$ at $p_n$ (endowed with the ambient metric at $p_n$) onto the
metric ball $B_{N_n}(p_n,r_n)$. After lifting the ambient metric of $N_n$ to
$B(\vec{0},r_n)$, we can consider the above map to be a local isometry (note that
the metric on $B(\vec{0},r_n)$ depends on $n$). We can also consider the weak CMC foliation
${\cal F}_n$ to be a weak CMC foliation of $B(\vec{0},r_n)$ with the pulled back metric.
Next consider the homothetic expansion of the metric in $B(\vec{0},r_n)$ centered at
the origin with ratio $\frac{\pi }{r_n\sqrt{\Lambda }}$. 
After this new normalization, we have the following properties:
\begin{description}
\item[(P1)] $B(\vec{0},r_n)$ becomes a metric ball of radius $\frac{\pi }{\sqrt{\Lambda }}$,
which we will denote by $B_n(\vec{0},\frac{\pi }{\sqrt{\Lambda }})$, since its
Riemannian metric still depends on $n$.

\item[(P2)] The absolute sectional curvature of
$B_n(\vec{0},\frac{\pi }{\sqrt{\Lambda }})$ is less than or equal to $\Lambda $.

\item[(P3)] We have a related CMC foliation ${\cal F}'_n$ on $B_n(\vec{0},\frac{\pi }{\sqrt{\Lambda }})$
so that the second fundamental form of some leaf $L_n'$ of ${\cal F}'_n$ passing through $\vec{0}$ satisfies
$|\sigma _{L'_n}|(\vec{0})\frac{\pi }{\sqrt{\Lambda }}\geq n$ for all $n\in \N$.
\end{description}

By Lemma~2.2 in~\cite{rst1}, the injectivity radius function $I_n$ of $B_n(\vec{0},\frac{\pi }{\sqrt{\Lambda }})$
satisfies $I_n(x)\geq \frac{\pi }{4\sqrt{\Lambda }}$ for all points $x\in B_n(\vec{0},
\frac{\pi }{4\sqrt{\Lambda }})$. 
By Theorem~2.1 in~\cite{rst1}, given $\a \in (0,1)$ there exists $r_0>0$ (only depending on
$\Lambda $ but not on $n$) so that we can pick harmonic coordinates
in the metric ball $B_n(x,r_0)$ centered at any point $x\in B_n(\vec{0},\frac{\pi }{8\sqrt{\Lambda }})$
of radius $r_0$. 
For the notion of harmonic coordinates, see~\cite{rst1}; the only property
we will use here about these harmonic coordinates is that the metric tensor on $B_n(x,r_0)$
(which depends on $n$) is $C^{1,\a }$-controlled.

Fix $n\in \N$ and let $q_n\in B_n(\vec{0},\frac{\pi }{8\sqrt{\Lambda }})$ be a supremum of
the function
\begin{equation}
\label{eq:fn}
q\in {\textstyle B_n(\vec{0},\frac{\pi }{8\sqrt{\Lambda }})}
\mapsto f_n(q)=
|\sigma _{{\cal F}'_n}|(q)\,  d_n(q,\partial {\textstyle B_n(\vec{0},\frac{\pi }{8\sqrt{\Lambda }})}),
\end{equation}
where given $q\in B_n(\vec{0},\frac{\pi}{8\sqrt{\Lambda}})$,
 \, $|\sigma _{{\cal F}'_n}|(q)=\sup \{ |\sigma _{L'}|(q) \ | \ L'\in {\cal F}'_n, \ q\in L'\}  $
and $d_n$ denotes extrinsic distance in $B_n(\vec{0},\frac{\pi }{8\sqrt{\Lambda }})$.
We observe that the following properties hold:
\begin{itemize}
\item The supremum in (\ref{eq:fn}) exists since the second fundamental form of all leaves
of ${\cal F}'_n$ is uniformly bounded in $B_n(\vec{0},\frac{\pi }{8\sqrt{\Lambda }})$
by definition of weak CMC foliation.

\item $f_n$ may not be continuous (as $q\mapsto |\sigma _{{\cal F}'_n}|(q)$ might fail to be
continuous) but still it is bounded, and it vanishes at
$\partial B_n(\vec{0},\frac{\pi }{8\sqrt{\Lambda }})$. A similar argument as in the observation above
shows that the supremum of $f_n$ is attained at an interior point of $B_n(\vec{0},\frac{\pi }{8\sqrt{\Lambda }})$.

\item The value of $f_n$ at $\vec{0}$ tends to $\infty $ as $n\to \infty $.
\end{itemize}

Let $s_n=\min \{ r_0,
\frac{1}{2}d_n(q_n,\partial B_n(\vec{0},
\frac{\pi }{8\sqrt{\Lambda }}))\} $ and let
$\lambda _n=|\sigma _{{\cal F}'_n}|(q_n)$. After rescaling the
metric of the ball $B_n(q_n,s_n)$ of radius $s_n$ centered at $q_n$
by the factor $\l _n$, we have associated weak CMC foliations ${\cal F}''_n$
of $\lambda_n\, B_n(q_n, s_n)$ such that the norm of the second fundamental
form of ${\cal F}''_n$ is at most $2$ everywhere and is equal to 1 at
the center $q_n$ of this ball.

Using the techniques described in~\cite{rst1}, it follows that a
subsequence of the ${\cal F}''_n$ converges to a weak CMC foliation
${\cal Z}$ of $\rth$; next we sketch an explanation of these techniques:
First, one uses the above harmonic coordinates to show that
the coordinatized Riemannian manifolds $\l _n\, B_n(q_n,s_n)$ converge uniformly on compact
subsets of $\R^3$ in the $C^{1,a}$-Euclidean topology to $\R^3$ endowed with
its Euclidean metric. Then one shows that the leaves in ${\cal F}''_n$
can be locally written as graphs of functions defined over Euclidean disks of uniformly
controlled size (this property follows from the uniform graph lemma for surfaces with constant mean
curvature, note that for this we need an uniform bound on the second fundamental
form of the leaves of ${\cal F}''_n$, which is obtained by the blow-up process).
Another consequence of the uniform graph lemma is that one obtains uniform
local $C^2$-bounds for the graphing functions. The next step consists of using that the graphing functions
satisfy the corresponding mean curvature equation, which is an elliptic PDE of
second order whose coefficients have uniform $C^{0,\a}$-estimates (this follows
from the $C^2$-bounds for the graphing functions and the $C^{1,\a }$-control
of the ambient metric on $\l _n\, B_n(q_n,s_n)$, see Lemma~2.4 in~\cite{rst1}),
together with Schauder estimates to conclude local uniform $C^{2,\a }$-bounds
for the graphing functions of the leaves of ${\cal F}''_n$. Finally, these
local uniform $C^{2,\a }$-bounds for the graphing functions allow us to use the Arzel\`a-Ascoli
Theorem to obtain convergence (after extracting a subsequence) in the $C^2$-topology to
limit graphing functions of class $C^2$. Using that the convergence is $C^2$ one can pass
to the limit the mean curvature equations satisfied by the graphing functions in the
sequence, thereby producing a  (local) limit weak CMC foliation of an open set of $\R^3$.
Finally, a diagonal argument produces a global limit weak CMC foliation
${\cal Z}$ of $\rth$ of a subsequence of the ${\cal F}''_n$. For details, see~\cite{rst1}.

The above process insures that the limit weak  CMC foliation
${\cal Z}$ of $\rth$ satisfies the following properties:
\begin{enumerate}
\item The second fundamental form of the leaves of
${\cal Z}$ is bounded in absolute value by $1$  (in particular,
there is a bound on the mean curvature of every leaf of ${\cal Z}$),
and there is a leaf $\Sigma $ of ${\cal Z}$ passing through the origin
which is not flat.
\item ${\cal Z}$ is not a minimal foliation (otherwise
${\cal Z}$ would consist entirely of planes, contradicting
the existence of $\Sigma $).
\end{enumerate}

Since the leaves of ${\cal Z}$ have uniformly bounded second
fundamental forms, after a sequence of translations of ${\cal Z}$ in
$\R^3$, we obtain another limit weak CMC foliation $\wh{\cal Z}$ of
$\rth$ with a leaf $\wh{L}$ passing through the origin which has non-zero
{\it maximal} mean curvature among all leaves of $\wh{\cal Z}$.
But the two-sided surface $\wh{L}$ is then stable by
Proposition~5.4 in~\cite{mpr19} and since it is also complete, then
$\wh{L}$ must be flat (see for instance Lemma~\ref{lema1} for a proof of
this well-known result). This contradiction finishes the proof of the theorem.
{\hfill\penalty10000\raisebox{-.09em}{$\Box$}\par\medskip}

In Section~\ref{sectspheres} we will use the next corollary, which follows immediately from
Theorem~\ref{thm5.7} after  scaling the estimate
for balls of radius 1.

\begin{corollary} \label{cor:scale}
There exists an $A>0$ such that if  ${\cal F}$ is a  weak $CMC$ foliation
 of a ball $\B(p,R)\subset \rth$, then $|\sigma_{\cal F}|(p)\leq A/R$,
 where $|\sigma _{\cal F}|$ is given by (\ref{eq:sigma}).
\end{corollary}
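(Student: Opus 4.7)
The plan is to reduce to the unit-ball case via a homothety and then apply Theorem~\ref{thm5.7} to a family of closed subballs exhausting the open ball.

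First I would prove the statement for $R=1$. Given a weak CMC foliation $\cF$ of $\B(p,1)$, pick any $\rho\in(0,1)$ and take $N=\overline{\B}(p,\rho)$, a compact flat Riemannian three-manifold with boundary. Its absolute sectional curvature is $0$, so in particular it is bounded above by $\Lambda=\pi^2$, for which $\pi/\sqrt{\Lambda}=1$. Then $\min\{\mathrm{dist}_N(p,\partial N),\pi/\sqrt{\Lambda}\}=\min\{\rho,1\}=\rho$, and Theorem~\ref{thm5.7} applied to the restricted foliation $\cF|_N$ yields
\[
|\sigma_{\cF}|(p)\le \frac{A}{\rho}.
\]
Letting $\rho\uparrow 1$ gives $|\sigma_{\cF}|(p)\le A$, with the same universal constant $A$ coming from Theorem~\ref{thm5.7}.

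Second, I would use scale invariance. For a general $R>0$ and a weak CMC foliation $\cF$ of $\B(p,R)$, consider the homothety $h\colon \rth\to\rth$, $h(x)=(x-p)/R$, which is a similarity with ratio $1/R$. The image $h(\cF)$ is a weak CMC foliation of $\B(\vec 0,1)$: each leaf remains smooth and has constant mean curvature equal to $R$ times the original (the weak-lamination tangency condition and the closedness property are preserved by a diffeomorphism). Because the second fundamental form has units of inverse length, $|\sigma_{h(\cF)}|(\vec 0)=R\cdot|\sigma_{\cF}|(p)$. Applying the unit-ball bound just established gives $R\cdot|\sigma_{\cF}|(p)\le A$, which rearranges to the desired estimate $|\sigma_{\cF}|(p)\le A/R$.

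There is no real obstacle here; the only mild points to verify are that a homothety of $\rth$ preserves the class of weak CMC foliations (which is straightforward from Definition~\ref{definition}, since tangential contact with opposite mean curvature vectors and uniform local bounds on $|\sigma|$ are both preserved), and that the open-ball case can be recovered from Theorem~\ref{thm5.7}, which is stated for compact three-manifolds with boundary, by exhausting $\B(p,R)$ with closed subballs and taking the infimum of the resulting estimates.
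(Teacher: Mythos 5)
Your proof is correct and follows essentially the same route as the paper, which simply notes that the corollary ``follows immediately from Theorem~\ref{thm5.7} after scaling the estimate for balls of radius~1.'' The details you supply --- exhausting the open ball by compact subballs, choosing $\Lambda=\pi^2$ so that $\pi/\sqrt{\Lambda}=1$ handles the flat case, and checking that a homothety preserves the weak CMC foliation structure while rescaling $|\sigma_{\cal F}|$ by the ratio --- are exactly the routine verifications the paper leaves implicit.
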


\section{Proof of Theorem~\ref{tt2}.}
\label{secprooftt2}

Let ${\cal L}$ be a weak $H$-lamination of a
punctured ball $B_N(p,r)$ in a Riemannian three-manifold $N$,
such that $|\sigma _{\cal L}|\, d_N(p,\cdot )\leq C$ for some $C>0$.
To prove Theorem~\ref{tt2}, it suffices to check that ${\cal L}$
extends to a weak $H$-lamination
of $B_N(p,r)$ for a smaller $r>0$. Throughout this section,
we will assume without loss of generality that $r$ is sufficiently small
so that the exponential map $\exp _p$ restricted
to $B(\vec{0}, r)\subset T_pN=\rth$ induces $\rth$-coordinates
on $B_N(p,r)$.

\begin{lemma}
\label{ass:flat}
Let ${\cal L}$ be a weak $H$-lamination of a
punctured ball $B_N(p,r)$ in a Riemannian three-manifold $N$,
such that $|\sigma _{\cal L}|\, d_N(p,\cdot )\leq C$ for some $C>0$.
Then for every sequence of positive numbers $C_n\searrow 0$, there
exists another sequence $r_n\searrow 0$ such that
\[
|\sigma_{\cal L}|\, d_N(p,\cdot )\leq C_n \quad \mbox{in } \,\, {\cal L}\cap B_N(p,r_n).
\]
\end{lemma}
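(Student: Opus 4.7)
The plan is to argue by contradiction via a blow-up analysis and reduce the problem to the minimal case of the Local Removable Singularity Theorem from~\cite{mpr10}. Suppose the conclusion fails: then there exist $\varepsilon_0>0$ and a sequence $q_n\in {\cal L}$ with $q_n\to p$ and $|\sigma_{\cal L}|(q_n)\, d_N(p,q_n)\geq \varepsilon_0$ for all $n$. Using the $\R^3$-normal coordinates fixed at the start of the section, set $\lambda_n=1/d_N(p,q_n)\to \infty$ and rescale the ambient metric by $\lambda_n$. The rescaled punctured balls become $B_n(\vec{0},r\lambda_n)\setminus\{\vec{0}\}$ and carry weak $(H/\lambda_n)$-laminations $\widetilde{\cal L}_n$ that inherit the scale-invariant bound $|\widetilde\sigma_n|(x)\,\widetilde d_n(\vec{0},x)\leq C$, while the rescaled points $\widetilde q_n$ sit at distance $1$ from $\vec{0}$ with $|\widetilde\sigma_n|(\widetilde q_n)\geq \varepsilon_0$.

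Next, using the compactness machinery described in the proof of Theorem~\ref{thm5.7} (uniform graph lemma for CMC surfaces, Schauder estimates, and Arzel\`a--Ascoli with a diagonal extraction), a subsequence of $\widetilde{\cal L}_n$ converges on compact subsets of $\R^3\setminus\{\vec{0}\}$ to a weak CMC lamination ${\cal L}_\infty$ of $\R^3\setminus\{\vec{0}\}$. Since $H/\lambda_n\to 0$, every leaf of ${\cal L}_\infty$ is minimal, so ${\cal L}_\infty$ is in fact a minimal lamination satisfying $|\sigma_\infty|(x)\,|x|\leq C$ on $\R^3\setminus\{\vec{0}\}$. A subsequential limit $q_\infty$ of the $\widetilde q_n$ lies at distance $1$ from $\vec{0}$ with $|\sigma_\infty|(q_\infty)\geq \varepsilon_0>0$, so some leaf of ${\cal L}_\infty$ is not flat; moreover $\vec{0}$ belongs to the closure of ${\cal L}_\infty$, since each $\widetilde{\cal L}_n$ has points arbitrarily close to $\vec{0}$.

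Applying the minimal case of the Local Removable Singularity Theorem (Theorem~1.1 in~\cite{mpr10}) to ${\cal L}_\infty$ near $\vec{0}$ extends ${\cal L}_\infty$ to a minimal lamination $\overline{{\cal L}_\infty}$ of $\R^3$ with $|\sigma_\infty|$ bounded in a neighborhood of $\vec{0}$; in particular there is a leaf $L_0\subset \overline{{\cal L}_\infty}$ passing through $\vec{0}$, and $L_0$ is a limit leaf of $\overline{{\cal L}_\infty}$, because $\vec{0}$ was a limit point of ${\cal L}_\infty$ and the set of limit points of a lamination restricted to any single leaf is closed and open.

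The final and most delicate step is to derive a contradiction from this configuration, and I expect this to be the main obstacle. The natural route is to invoke the stability of limit leaves of minimal laminations (as in~\cite{mpr19}) together with the minimal case of the Stability Lemma (Lemma~\ref{lema1} with $H=0$) to conclude that $L_0$ is a plane through $\vec{0}$. Every other leaf $L'\subset \overline{{\cal L}_\infty}$ is disjoint from $L_0$, hence contained in one of the open half-spaces determined by $L_0$; non-limit leaves of the lamination are properly embedded, and limit leaves are again planes by the stability argument, so the Hoffman--Meeks Halfspace Theorem forces each such $L'$ to be a plane parallel to $L_0$. Therefore every leaf of $\overline{{\cal L}_\infty}$ is a plane, so $|\sigma_\infty|\equiv 0$, contradicting $|\sigma_\infty|(q_\infty)\geq \varepsilon_0>0$ and completing the proof.
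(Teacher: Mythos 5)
Your setup --- the contradiction argument, the rescaling by $\lambda_n=1/d_N(p,q_n)$, and the extraction of a limit minimal lamination ${\cal L}_\infty$ of $\R^3\setminus\{\vec{0}\}$ satisfying $|\sigma_\infty|\cdot|x|\leq C$ with a non-flat point on $\esf^2(1)$ --- coincides with the paper's opening move. The gap is in the final step: you assert that the leaf $L_0$ of the extended lamination passing through $\vec{0}$ is a \emph{limit leaf}, and everything afterwards (stability, planarity, halfspace theorem) hangs on that. But $\vec{0}$ lying in the closure of ${\cal L}_\infty$ only says that some leaf passes through $\vec{0}$ after extension; it does not make $\vec{0}$ a limit point in the sense of Definition~\ref{def-limset}, i.e.\ a point at which distinct sheets of the lamination accumulate. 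The dangerous scenario --- which is in fact the one singled out by the theory --- is that $\overline{{\cal L}_\infty}$ consists of a \emph{single} complete, embedded, non-flat minimal surface of finite total curvature passing through $\vec{0}$; a translated catenoid, for instance, satisfies $|\sigma|\,R\leq C$ on $\R^3\setminus\{\vec{0}\}$ and is a one-leaf lamination with no limit leaves. Corollary~6.3 of~\cite{mpr10}, which the paper invokes at exactly this point, says this is the \emph{only} possibility when the limit is non-flat. Your argument produces no contradiction in this case, and ruling it out is the real content of the lemma.

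The paper closes this gap by returning to the original, unrescaled lamination: it shows (Claims A and B of its proof) that every small sphere $S^2_N(p,\de)$ meets ${\cal L}$ (mean-curvature comparison), that the convergence to the non-flat limit has multiplicity one (Lemma~4.2 in~\cite{mpr20}), and hence that near $p$ the lamination reduces to a single properly embedded leaf $L$ of constant mean curvature with $p\in\overline{L}$. Then $L$ has finite area (Harvey--Lawson~\cite{hl1}) and a well-defined density at $p$ (Allard's monotonicity formula~\cite{al1}), so its tangent cones at $p$ are cones over stationary integral one-dimensional varifolds in the unit sphere and are flat at their smooth points --- contradicting that the blow-up limit is smooth and non-flat. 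None of this geometric-measure-theoretic return to ${\cal L}$ appears in your proposal, and without it the argument does not close. (A smaller issue: your justification that $\vec{0}\in\overline{{\cal L}_\infty}$ --- ``each rescaled lamination has points arbitrarily close to $\vec{0}$'' --- is not enough, since those points could concentrate at scales tending to $0$ with $n$ and vanish from every fixed annulus in the limit; the mean-curvature comparison of the paper's Claim~A supplies the missing uniform statement.)
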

\begin{proof}
Arguing by contradiction, suppose that the lemma fails. It follows that
there exists an $\ve>0$
and a sequence of points $p_n\in {\cal L}$ such that
$\lim_{n\to \infty} p_n=p$ and
$\ve\leq |\sigma_{\cal L}|(p_n) \, d_N (p,p_n)$ for each $n$.
Let $\lambda_n =\frac{1}{d_N (p,p_n)}$ and consider
the sequence of rescaled weak $\frac{H}{\lambda _n}$-laminations
${\cal L}_n=\lambda_n {\cal L}\subset \lambda_n B_N(p,r)$;
here by $\l _nB_N(p,r)$ we mean $B_N(p,r)$ endowed with the
Riemannian metric $\l _n^2\langle ,\rangle $, where $\langle ,
\rangle $ is the metric on $N$.

As $H$ is fixed, $\l _n\to \infty $, $|\sigma _{\cal L}|\, d_N(p,\cdot )\leq C$ and
$|\sigma _{\cal L}|\, d_N(p,\cdot )$ is invariant under
homothetic rescalings of the metric around $p$, then there exists a subsequence
of the weak $\frac{H}{\l _n}$-laminations ${\cal L}_n$ that converges to a
minimal lamination ${\cal L}'$ of $\rth-\{ \vec{0}\} $,
which furthermore satisfies $|\sigma_{{\cal L}'}|\, R\leq C$
in $\R^3-\{ \vec{0}\} $ (recall that $R$ stands for radial
distance in $\R^3$ to the origin). As $\ve
\leq |\sigma_{\cal L}|(p_n) \, d_N (p,p_n)$ for all $n$,
then $\ve \leq |\sigma_{{\cal L}'}|(q_{\infty })$ for
some $q_{\infty }\in {\cal L}'\cap \esf ^2(1)$; thus the lamination
${\cal L}'$ is not flat. In this setting,
Corollary~6.3 in~\cite{mpr10} gives that the closure $\overline{{\cal L}'}$
of ${\cal L}'$ in $\R^3$ consists of a single leaf $\overline{L'}$,
which is a non-flat minimal surface
with finite total curvature (clearly ${\cal L}'=\{ L'\} $ where
$L'=\overline{L'}-\{ \vec{0}\} $).
\par
\vspace{.2cm}
\noindent
{\bf Claim A:} {\it $\overline{L'}$ contains the origin $\vec{0}$.}
\par
\vspace{.2cm}
\noindent
To see this, it suffices to show that the distance sphere
$S_N^2(p,\de )$ intersects ${\cal L}$ for every $\de >0$ sufficiently small.
Otherwise, there exists $\de _1 >0$ such that the following properties hold:
\begin{enumerate}
  \item $S^2_N(p,\de _1)\cap {\cal L}=\mbox{\O }$.
  \item For all $\de \in (0,\de _1]$, the mean curvature function of
  $S^2_N(p,\de )$ is strictly greater than $H$.
  \item The family $\{ S^2_N(p,\de )\ | \ \de \in (0,\de _1]\} $ foliates the punctured closed ball
  $\overline{B}_N(p,\de _1)-\{ p\} $.
 \end{enumerate}
Since $B_N(p,\de _1)$ intersects ${\cal L}$ and ${\cal L}$ is a closed subset of
$\overline{B}_N(p,\de _1)-\{ p\} $, then there exists a largest $\de _2\in (0,\de _1)$ such that $S^2_N(p,\de _2)\cap {\cal L}\neq \mbox{\O }$. This contradicts the mean curvature comparison principle,
which proves Claim A.
\par
\vspace{.2cm}
\noindent
{\bf Claim B:} {\it There exists $r'\in (0,r)$ such that the following properties
hold:
\begin{description}
\item[(B1)] The intersection of ${\cal L}$ with $\overline{B}_N(p,r')$
consists of a single leaf $L$ of the
induced lamination. Furthermore, $L\cap S^2_N(p,r')$ is a simple closed curve
    along which $L$ and $S^2_N(p,r')$ intersect almost orthogonally.
\item[(B2)] $L\cap \overline{B}_N(p,r')$ is properly embedded in $\overline{B}_N(p,r')-\{ p\} $, with $p$ in its closure.
\end{description}
}
\par
\vspace{.2cm}
\noindent
To see that Claim B holds, first note that by Claim A, the intersection of $L'$
with any closed ball $\overline{\B }(R)$ centered at $\vec{0}$ of
sufficiently small radius $R>0$ is a punctured disk which is almost orthogonal to $\esf^2 (R)$.
Since $L'$ is not flat, then the  convergence of the laminations ${\cal L}_n $ to $L'$ has multiplicity
one (by Lemma~4.2 in~\cite{mpr20}).
Thus, we deduce that for $n$ large and $r'_n:=d_N(p_n,p)R$, there
exists a unique leaf $L(n)$ of ${\cal L}$ that intersects
$S^2_N(p,r'_n)$, and this intersection is a simple closed curve along which
$L(n)$ and $S^2_N(p,r'_n)$ intersect almost orthogonally.
If $L(n)$ were not the unique leaf of ${\cal L}$ that intersects $B_N(p,r'_n)$,
then ${\cal L}\cap B_N(p,r'_n)$ would contain a non-empty
sublamination which does not intersect $S^2_N(p,r'_n)$.
A similar comparison argument for the mean curvature as in the proof of Claim A shows that
this is impossible for $n$ sufficiently large. Hence item~{\bf (B1)} above holds by taking
$r'=r'_n$ and $L=L(n)$ for $n$ large.

Suppose that $L$ is a limit leaf of ${\cal L}$. In this case, item~{\bf (B1)} implies that
every point in $L\cap S^2_N(p,r')$ is the limit in $N$ of a
sequence of points of $L$ itself. This is impossible, since $L$
and $S^2_N(p,r')$ intersect almost orthogonally in a Jordan curve.
Thus, $L$ is not a limit leaf of ${\cal L}$. We next prove item~{\bf (B2)}:
If $L$ were not proper in $B_N(p,r')-\{ p\} $, then ${\cal L}\cap B_N(p,r')$ would contain a limit leaf, which
therefore would not be $L$; this contradicts {\bf (B1)}. Finally, if $p$ is not in the closure of $L$,
then $p$ is at positive distance from $L$. This contradicts {\bf (B1)} together with
$\ve \leq |\sigma _{\cal L}|(p_n)d(p,p_n)$ as $p_n$ converges to $p$. Now {\bf (B2)} is proved, as well
as Claim B.
\par
\vspace{.2cm}
\noindent
We next finish the proof of the lemma.
Since $L$ is properly embedded in $B_N(p,r')-\{ p\} $, then $L$ is a locally rectifiable current in
$B _N(p,r')-\{ p\} $. As $L$ has bounded mean curvature (actually constant), then
Theorem 3.1 in Harvey and Lawson~\cite{hl1} implies that $L$ has finite area. Since $L$ has bounded mean
curvature and finite area, the monotonicity formula in Corollary~5.3 of Allard~\cite{al1}
implies that $L$ has a well-defined finite density at $p$. In this setting, we can apply Theorem 6.5
in~\cite{al1} to deduce that under any sequence of homothetic expansions $\{ L'_n\}_n$ of $L$,
the surfaces $L'_n$ converge (up to a subsequence) to a cone ${\cal C}_{L}\subset \R^3$
(depending on the sequence), which is the cone over a stationary, integral one-dimensional varifold $\G $
in the unit two-sphere of $\R^3$, and ${\cal C}_L$ is flat at its smooth points.
But the blow-up limit $L'$ is smooth and not flat, which is a contradiction.
This contradiction proves the lemma.
\end{proof}

By Lemma~\ref{ass:flat}, it follows that the following property holds:
\par
\vspace{.2cm}
\noindent
{\bf (P)} {\it Under rescaling by every sequence
$\{\lambda_n\}_n\subset (0,\infty )$ with $\lambda_n\to \infty$, a
subsequence of the weak $\frac{H}{\l _n}$-laminations
${\cal L}_n =\lambda _n{\cal L}\subset \lambda_n B_N(p,r)$
converges in $\R^3-\{ \vec{0}\} $ to a lamination ${\cal L}'$ of
$\rth$ by parallel planes. (Note that  ${\cal L}'$ might depend on
$\{\l_n\}_n$).
}
\par
\vspace{.2cm}
\noindent

\begin{proposition}
\label{propos5.2}
Theorem~\ref{tt2} holds in the particular case $N=\rth$ and $p=\vec{0}$.
\end{proposition}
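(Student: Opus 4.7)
The plan is to combine Property~(P) with the minimal case of the Local Removable Singularity Theorem from~\cite{mpr10}, exploiting the fact that at large rescalings the weak $H$-lamination becomes nearly minimal and essentially flat, which should provide enough rigidity to extend $\mathcal{L}$ across $\vec{0}$.

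First, I would reinterpret Property~(P) as follows: each blow-up limit of $\mathcal{L}$ is a minimal foliation of $\R^3$ by parallel planes, which is precisely the conclusion the minimal Local Removable Singularity Theorem would give if applied to such a limit. The content we need is to transfer this rigidity back to the original $\mathcal{L}$. Toward that end, I would show that there exists a unique $v\in \esf^2/\{\pm 1\}$ such that every blow-up limit of $\mathcal{L}$ is a foliation by planes orthogonal to $v$. The heuristic is that blow-ups at nearby rescalings $\lambda$ and $(1+\epsilon)\lambda$ produce nearby geometric limits in a compact set, while two distinct parallel-plane foliations are isolated from one another in the topology of $C^2$-convergence on compact subsets of $\R^3-\{\vec{0}\}$; hence the set of attainable asymptotic directions is both connected (by scale-continuity) and locally discrete (by rigidity of parallel-plane limits), hence a single point.

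Next, given the unique asymptotic direction $v$, I would show that there exists $r_1\in (0,r)$ such that in $\B(r_1)-\{\vec{0}\}$ every leaf of $\mathcal{L}$ is a graph over a planar domain in $v^{\perp}$ with gradient tending uniformly to zero as one approaches $\vec{0}$. The control on tangent planes comes from Lemma~\ref{ass:flat}: the bound $|\sigma_{\mathcal{L}}|\, R\to 0$ prevents the unit normals along $\mathcal{L}$ from drifting away from $\pm v$ as $R\to 0$. A Jordan-curve/mean-curvature-comparison argument in the style of Claim B above handles the existence and uniqueness of each graph near the puncture.

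Finally, each graphing function $u$ solves the constant mean curvature equation over a punctured planar disk in $v^{\perp}$, a quasilinear uniformly elliptic PDE, with bounded gradient and an isolated point singularity. A Serrin-type removable singularity theorem for such equations extends $u$ smoothly across the puncture, so each leaf extends smoothly across $\vec{0}$. Gluing these extensions and verifying conditions 1--3 of Definition~\ref{definition} (closedness is immediate, the tangential-intersection condition follows from the maximum principle applied leaf-by-leaf, and local uniform boundedness of the second fundamental form follows from the curvature hypothesis together with the smoothness of the extended leaves) yields the desired weak $H$-lamination of $\B(r_1)$.

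The hardest step is the uniqueness of the asymptotic direction $v$. Neighboring rescalings of $\mathcal{L}$ differ by a small homothety, and one needs the blow-up limits to vary continuously with the scale and yet lie in a discrete set of plane foliations. Implementing this inside the weak (rather than embedded) lamination framework is delicate because leaves may touch tangentially, so the argument must combine the decay from Lemma~\ref{ass:flat}, compactness of the Grassmannian of oriented planes, and the multiplicity-one convergence supplied by Lemma~4.2 of~\cite{mpr20} (already invoked in the proof of Lemma~\ref{ass:flat}).
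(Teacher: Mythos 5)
There is a genuine gap at the central step of your plan: the claim that \emph{``two distinct parallel-plane foliations are isolated from one another in the topology of $C^2$-convergence on compact subsets of $\R^3-\{\vec{0}\}$''} is false. The foliations of $\R^3$ by planes orthogonal to $v$ form a continuum parameterized by $v\in\R\Pe^2$, and two such foliations with nearby directions are $C^2$-close on every compact set. Hence the set of attainable asymptotic directions is connected but \emph{not} locally discrete, and your ``connected plus discrete, hence a point'' argument collapses. This is exactly the classical non-uniqueness-of-tangent-cone difficulty, and it cannot be dismissed by soft topology; the paper sidesteps it by reversing the logical order. It first classifies the components of $(\frac{1}{\ve}{\cal L})\cap\{\frac12\le|x|\le 2\}$ into four types (disks, annuli, planar domains, infinite multigraphs), proves that some leaf limiting to $\vec{0}$ must be a proper annulus of finite area and conformally a punctured disk, extends that single leaf across $\vec{0}$ using Gr\"{u}ter's regularity theorem~\cite{gr1}, and only \emph{then} deduces the unique tangent plane $\Pi$ at $\vec{0}$ (property {\bf (P)'}) from the smooth extension. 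Uniqueness of the blow-up direction is a consequence of extending a leaf, not an input to it.

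Two further points would also need repair even if a unique direction $v$ were granted. First, a small-gradient bound does not force a leaf near the puncture to be a single-valued graph over a domain in $v^{\perp}$: a leaf can be an infinite multigraph spiraling into two annular leaves (the paper's type~4 components), which is ruled out only by the maximum principle applied \emph{after} the bounding annuli have been extended smoothly across $\vec{0}$ (Assertion~\ref{ass5.5}). Second, the uniform bound on $|\sigma_{\cal L}|$ near $\vec{0}$ for the extended object is not ``immediate from the curvature hypothesis'': the hypothesis only gives $|\sigma_{\cal L}|\le C/R$, which degenerates at the puncture, and smoothness of the (at most two) leaves through $\vec{0}$ does not control the curvature of the other leaves accumulating there; the paper needs a separate blow-up argument ending in Bernstein's theorem (Assertion~\ref{ass5.6}). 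On the positive side, your final step --- a Serrin-type removable singularity theorem for the CMC graph equation with bounded gradient over a punctured disk --- is a legitimate alternative to the paper's finite-energy/Gr\"{u}ter route \emph{once} the graphical structure near $\vec{0}$ has been established; but as written the proposal assumes at the outset the rigidity that constitutes the actual content of the proposition.
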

{\it Proof.} As property {\bf (P)} holds, it follows that for
$\ve > 0$ sufficiently small, in the annular domain
$A = \{ x \in \rth \mid \frac{1}{2} \leq |x| \leq 2 \}$,
the normal vectors to the leaves of $(\frac{1}{\ve} {\cal L}) \cap A$ are almost parallel,
and after a rotation (which might depend on $\ve $), we will assume that the unit normal
vector to the leaves of $(\frac{1}{\ve} {\cal L}) \cap A$ lies in a small
neighborhood of $\{ \pm (0,0,1)\} $.
Hence, for such a sufficiently small $\ve$, each
 component $C$ of $(\frac{1}{\ve} {\cal L}) \cap A$ that intersects
${\esf}^2(1)$ is of one of the following four types
, see
Figure~\ref{fig4new}:
\begin{figure}
\begin{center}
\includegraphics[width=14cm,height=6.7cm]{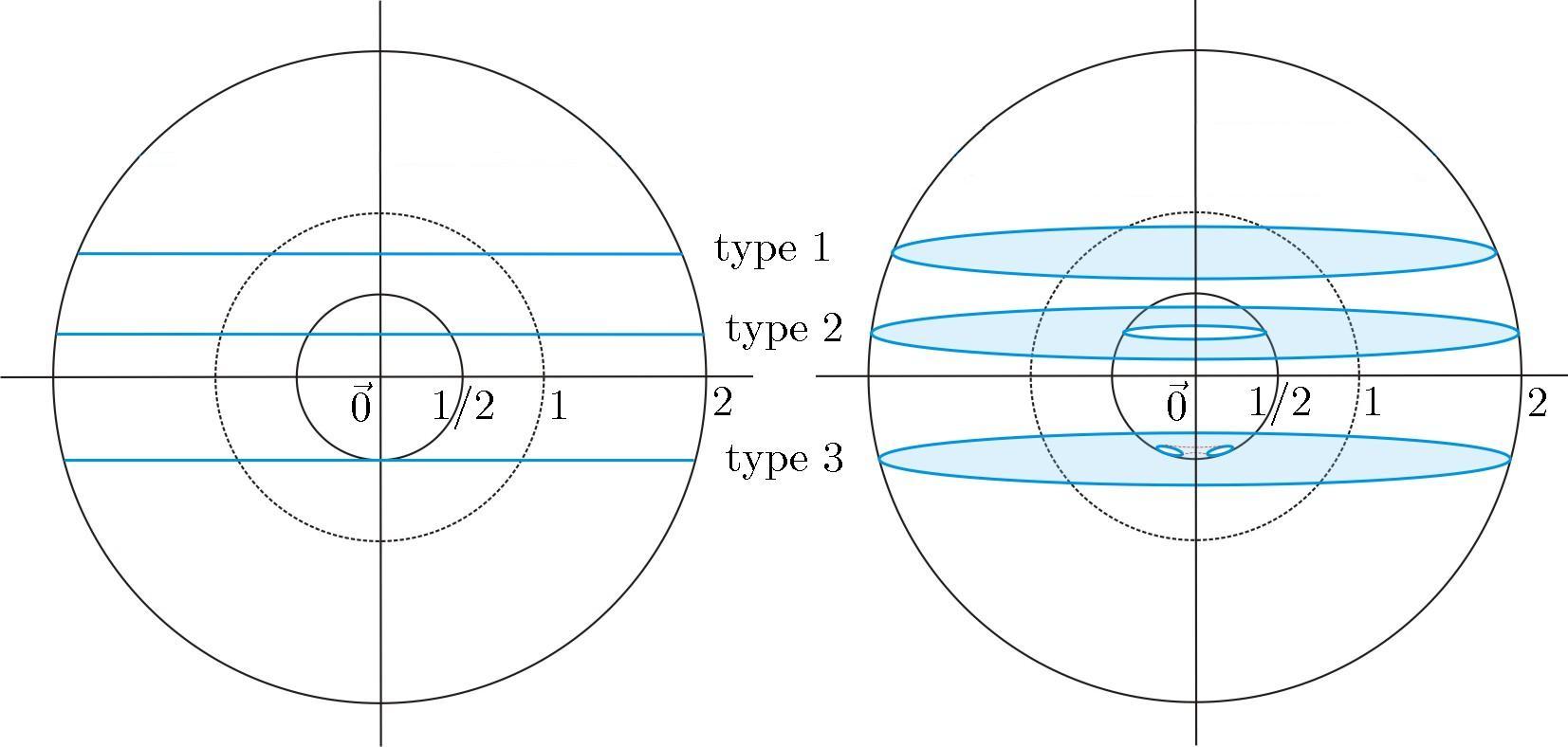}
\caption{Type 1, 2, 3 connected components of ${\cal L}_{\ve }$.}
\label{fig4new}
\end{center}
\end{figure}
\begin{description}
\item[{\rm Type 1.}] $C$ is a compact disk with boundary $\G (C)$ in ${\esf}^2(2)$.
\item[{\rm Type 2.}] $C$ is a compact annulus with one boundary curve $\G (C)$ in
${\esf}^2 (2)$ and the other boundary curve in ${\esf}^2 (\frac{1}{2})$.
\item[{\rm Type 3.}] $C$ is a compact planar domain whose boundary
 consists of a single closed curve $\G (C)$  in
 ${\esf}^2 (2)$ together with at least two closed
 curves in ${\esf}^2 (\frac{1}{2})$,  and
where $\G (C)$ bounds a compact disk in
$\frac{1}{\ve} {\cal L}$;
\item[{\rm Type 4.}] $C$ is an infinite multigraph whose limit set consists
of two compact components of $(\frac{1}{\ve} {\cal L}) \cap A$ of type~2
(any such spiraling component does not intersect the intersection
of $A$ with an open slab of small width around height $\pm \frac{1}{2}$).
\end{description}

We also define
$A(n)=\{ x \in \rth \mid \frac{1}{2^{2n+1}} \leq |x| \leq \frac{1}{2^{2n-1}} \} $
for each $n\in \N \cup \{ 0\} $ (so $A=A(0)$). Note that $\bigcup _{n\in \N\cup \{ 0\} }
A(n)=\overline{\B}(2)-\{ \vec{0}\} $. Given a component
$C$ of $(\frac{1}{\ve} {\cal L}) \cap A$, let
$\Delta _C$ be the leaf of $(\frac{1}{\ve }{\cal L})\cap [\overline{\B }(2)-\{
\vec{0}\} ]$ that contains $C$. Given $n\in \N\cup \{ 0\} $ fixed, the above
division of components $C$ of $(\frac{1}{\ve} {\cal L}) \cap A(0)$ can be
directly adapted to components $\Delta _C(n)=\Delta _C\cap A(n)$ of
$(\frac{1}{\ve} {\cal L}) \cap A(n)$. We make the following elementary
observations:
\begin{description}
\item[(O1)] If for some $n\in \N \cup \{ 0\} $, $\Delta _C(n)$ is of type 4, then
$\Delta _C(n')$ is of type 4 for every $n'\in \N \cup \{ 0\} $, and $\Delta _C$
has $\vec{0}$ in its closure.
\item[(O2)] If for some $n\in \N \cup \{ 0\} $, $\Delta _C(n)$ is either empty,
of type 1 or of type 3, then $\Delta _C$ is a disk which is at positive distance from $\vec{0}$.
\item[(O3)] If neither {\bf (O1)} nor {\bf (O2)} occur, then $\Delta _C(n)$
is of type 2 for every $n$ and thus, $\Delta _C$ is a proper annulus limiting to
$\vec{0}$.
\end{description}

\begin{assertion}
\label{ass5.3}
There exists a leaf of $\frac{1}{\ve}{\cal L}$ having $\vec{0}$
in its closure.
\end{assertion}
\begin{proof}
If $(\frac{1}{\ve} {\cal L}) \cap A$ contains a component of
type 4, then {\bf (O1)} implies that our claim holds. If $(\frac{1}{\ve} {\cal L}) \cap A$
contains a component $C$ of type 2 such that $\Delta _C(n)$ is of type 2 for every $n$,
then {\bf (O3)} insures that $\Delta _C$ contains $\vec{0}$ in its closure
and we are also done. We will prove that the remaining case is impossible
and this will finish the proof of this assertion.

The remaining case is that
for every component $C$ of $(\frac{1}{\ve} {\cal L}) \cap A$, there exists
$n\in \N\cup \{ 0\} $ such that $\Delta _C(n)$ is empty, of type 1 or of type 3; in the
case that $\Delta _C(n)$ is of type 3, then $\Delta _C(n')$ is empty
for every $n'>n+1$. By {\bf (O2)}, we have that $(\frac{1}{\ve} {\cal L}) \cap [\overline{\B }(2)-\{ \vec{0}\} ]$
consists of an (infinite) collection of pairwise disjoint compact disks.
Since $\vec{0}$ is in the closure of ${\cal L}$, there exists a sequence of points $\{
p_m\} _{m\in \N }$
in compact disk leaves $D(p_m)$ of $(\frac{1}{\ve} {\cal L}) \cap [\overline{\B }(2)-\{ \vec{0}\} ]$,
such that the $p_m$ converge to $\vec{0}$ as $m\to \infty $. We define for every $k\in \N$,
\[
{\cal D}(k)=\bigcup _{m=k}^{\infty }D(p_m).
\]

We claim that there exists $k\in \N$ such that ${\cal D}(k)$ is not closed in
$\overline{\B }(2)-\{ \vec{0}\} $. Otherwise, $\{ {\cal D}(k)\cap A\ | \ k\in
\N \} $ is a collection of closed subsets of the compact space $A$, which
clearly satisfies the finite intersection property; therefore, there exists a point
$q\in [\cap _{k=1}^{\infty }{\cal D}(k)]\cap A$. In particular, $q\in D(p_j)$
for some $j\in \N$. But as $D(p_j)$ is disjoint from ${\cal D}(j+1)$, we
arrive to a contradiction. This contradiction proves that
there exists $k\in \N$ such that ${\cal D}(k)$ is not closed in
$\overline{\B }(2)-\{ \vec{0}\} $.

Since ${\cal D}(k)$ is not closed in $\overline{\B }(2)-\{ \vec{0}\} $,
then there exists a point $x\in \overline{\B }(2)-\{ \vec{0}\} $ which is
in the closure of ${\cal D}(k)$ but not in ${\cal D}(k)$. As $\frac{1}{\ve }
{\cal L}$ is closed in $\overline{\B }(2)-\{ \vec{0}\} $ and
${\cal D}(k)\subset \frac{1}{\ve }{\cal L}$, then $x\in \frac{1}{\ve }
{\cal L}$. Thus, there exists a leaf component $D_x$
of $\frac{1}{\ve }{\cal L}$ passing through $x$, which is disjoint from
${\cal D}(k)$ as $x\in D_x-{\cal D}(k)$. Furthermore, by our previous arguments,
$D_x$ is topologically a closed disk which is at positive distance
from $\vec{0}$. Choose a compact neighborhood $U$ of $D_x$ in $\overline{\B }(2)-
\{ \vec{0}\} $ which does not contain the origin. Since $\frac{1}{\ve }{\cal L}$ is a
lamination and $D_x$ is a closed disk leaf, $U$ can be chosen so that every leaf of
$\frac{1}{\ve }{\cal L}$ which intersects $U$ is entirely contained in $U$.
If follows that there exists a subsequence of the disks $\{ D(p_m)\} _m$
which is contained in $U$. This is clearly a contradiction,
as the $D(p_m)$ contain points that converge to $\vec{0}$. This proves Assertion~\ref{ass5.3}.
\end{proof}

\begin{assertion}
\label{ass5.4}
There exists a leaf $L$ of $\frac{1}{\ve}{\cal L}$ which is a proper annulus having $\vec{0}$
in its closure. Moreover, every such $L$ extends smoothly across $\vec{0}$.
\end{assertion}
\begin{proof}
By Assertion~\ref{ass5.3}, there exists a component $C$ of $(\frac{1}{\ve}{\cal L})\cap A$
such that $\Delta _C$ has $\vec{0}$ in its closure. By observation {\bf (O2)} above,
$\Delta _C(n)$ is of type 2 or 4 for all $n\in \N$. If $\Delta _C(n)$ is of type~2 for
some $n$ (hence for all $n$), then $\Delta _C$ is a proper annulus limiting to
$\vec{0}$. Otherwise, $\Delta _C(n)$ is of type 4
for some $n$ (hence for all $n$), and thus the limit set of $C$ in $A$ produces two compact
components $C_1,C_2$ of $(\frac{1}{\ve}{\cal L})\cap A$ each of which is of type 2 and
such that $\Delta _{C_1},\Delta _{C_2}$ are proper annuli limiting to $\vec{0}$.
This proves the first sentence of the assertion.

We next choose a proper annular leaf $L$ of $\frac{1}{\ve}{\cal L}$
with $\vec{0}\in \overline{L}$ and check that $L$ extends
smoothly across $\vec{0}$. Since every blow-up limit of $L$ is a
lamination of $\rth$ by parallel planes (property {\bf (P)}), then  $L$ intersects
small spheres $\esf ^2(r')$ of radius $0<r'\ll r$ almost orthogonally in a
curve of length no greater than $3\pi r'$, and thus, $L$ has finite area.
The conformal structure of $L$ must be the one of a punctured disk,
as follows from the fact that under the conformal change of metric $\widetilde{g}=
\frac{1}{R^2}\langle ,\rangle $, $(L,\widetilde{g}|_L)$ has linear area growth
(here, $R=\sqrt{x_1^2+x_2^2+x_3^2}$ and $\langle ,\rangle $ is the inner product
in $\R^3$, recall that our present goal is to prove Proposition~\ref{propos5.2},
which is the $\R^3$-case of Theorem~\ref{tt2}). Since
$L$ has finite area and is conformally a punctured disk,
then $L$ can be conformally parameterized
by a mapping from a punctured disk into $\R^3$ with
finite energy. In this setting, the main theorem
in~\cite{gr1} (which holds true even if we exchange our current
ambient manifold $\R^3$
by any Riemannian three-manifold whose sectional
curvature is bounded from above and whose injectivity
radius is bounded away from zero, conditions which
will be satisfied in the general setting for $N$ that will be dealt with in
Proposition~\ref{propos5.7} below, since we work in
an arbitrarily small ball $B_N(p,r')$, $r'\in (0,r)$),
implies that $L$ extends $C^1$ through $p$
and so, standard elliptic theory gives that $L$ extends
smoothly across $\vec{0}$ as a mapping. Since $L$ is embedded
around $\vec{0}$, then the extended image surface is also smooth.
This completes the proof of Assertion~\ref{ass5.4}.
\end{proof}

\begin{assertion}
\label{ass5.5}
There are no type 4 components of $(\frac{1}{\ve }{\cal L})\cap A$.
\end{assertion}
\begin{proof}
Arguing by contradiction, if there exists a type 4 component
$C$ of $(\frac{1}{\ve }{\cal L})\cap A$, then
the limit set of $C$ in $A$ produces two compact
components $C_1,C_2$ of $(\frac{1}{\ve}{\cal L})\cap A$ each of which is of type 2 and
such that $\Delta _{C_1},\Delta _{C_2}$ are proper annuli limiting to $\vec{0}$.
By Assertion~\ref{ass5.4}, both $\Delta _{C_1},\Delta _{C_2}$
extend smoothly through $\vec{0}$ by the previous paragraph.
This contradicts the usual maximum principle for $H$-surfaces, as
both $\Delta _{C_1}$, $\Delta _{C_2}$ have the same orientation at $\vec{0}$
since the orientation of the multigraph component $C$ of
type 4 induces the orientation of both $\Delta _{C_1},\Delta _{C_2}$.
\end{proof}

By Observation {\bf (O2)} and  Assertions~\ref{ass5.4} and~\ref{ass5.5}, every leaf of ${\cal L}$
which limits to $\vec{0}$ is a proper annulus which extends smoothly across $\vec{0}$
(hence by the maximum principle there are at most two of them, with common tangent
plane $\Pi $ at $\vec{0}$ and oppositely pointing mean curvature vectors) and there exists at least
one such proper annulus. Therefore, property {\bf (P)} can now be improved to the following property:
\par
\vspace{.2cm}
\noindent
{\bf (P)'} {\it Under rescaling by every sequence
$\{\lambda_n\}_n\subset (0,\infty )$ with $\lambda_n\to \infty$, a
subsequence of the weak $\frac{H}{\l _n}$-laminations
${\cal L}_n =\lambda _n{\cal L}\subset \lambda_n B_N (p,r)$
converges in $\R^3-\{ \vec{0}\} $ to a lamination ${\cal L}'$ of
$\rth$ by planes parallel to $\Pi $.
}
\par
\vspace{.2cm}
\noindent

Let $F$ be one of the at most two proper annular leaves in ${\cal L}$ limiting
to $\vec{0}$.
Let $\overline{F}$ be the
extended $H$-disk obtained after attaching the origin to $F$. Consider an intrinsic geodesic disk
$D_{\overline{F}}(\vec{0},\de)$ in $\overline{F}$ centered at $\vec{0}$ with radius $\de $,
and let $\eta $ be the unit normal vector field
 to $D_{\overline{F}}(\vec{0},\de )$. Pick coordinates $q=(x,y)$ in $D_{\overline{F}}(\vec{0},\de)$
and let $t\in [-\tau ,\tau ]\mapsto
\g _q(t)=q+t\eta (q)$ be the straight line in $\R^3$
passing through $q$ with velocity vector $\eta (q)$
(here $\tau >0$ is small and independent of
$q\in D_{\overline{F}}(\vec{0},\de )$ so that the straight lines
$\g _q$ do not intersect each other).
Then for some $\tau >0$ small,
$(x,y,t)$ produces ``cylindrical'' normal coordinates in a neighborhood $V$ of $\vec{0}$ in $\R^3$,
and we can consider the natural projection
\[
\Phi \colon V\to D_{\overline{F}}(\vec{0},\de ),\ \Phi (x,y,t)=(x,y).
\]
Since by {\bf (P)'} every blow-up limit of ${\cal L}$ from $\vec{0}$
is a
lamination of $\R^3$ by planes parallel to $\Pi $,
we conclude that for $\de $ and $\tau $ sufficiently small,
the angle of the intersection of any leaf component $L_V$ of ${\cal L}\cap V$
with any straight line $\g _q$ as above can be made arbitrarily close to $\frac{\pi }{2}$. Taking
$\de $ much smaller than $\tau $, a monodromy argument implies that
any leaf component $L_V$ of
${\cal L}\cap V$ which contains a point at distance at most $\frac{\de }{2}$ from $\vec{0}$ is a graph over
$D_{\overline{F}}(\vec{0},\de )$; in other words, $\Phi $ restricts to $L_V$ as a diffeomorphism
onto $D_{\overline{F}}(\vec{0},\de )$.
\begin{assertion}
\label{ass5.6}
There exists a uniform bound around $\vec{0}$ for the function $|\sigma _{\cal F}|$ defined in (\ref{eq:sigma})
(note that this property will complete the proof of Proposition~\ref{propos5.2}).
\end{assertion}
\begin{proof}
Reasoning by contradiction, assume that there exists a sequence of points $p_n$ in leaves
$L_n$ of ${\cal L}$ converging to $\vec{0}$, such that $|\sigma _{L_n}|(p_n)$ diverges.
Without loss of generality, we can assume that $p_n\in V$ and $p_n$ is a point where
the following function attains its maximum:
\[
f_n\colon L_n\cap V\to [0,\infty ), \quad f_n(a)
=|\sigma _{L_n}|(a)\, d_{\overline{F}}(\Phi (a),\partial D_{\overline{F}}(\vec{0},\de )),
\]
where $d_{\overline{F}}$ denotes the intrinsic distance in
$\overline{F}$ to the boundary $\partial D_{\overline{F}}(\vec{0},\de )$.
 Now expand the above coordinates $(x,y,z)$ centered at $\vec{0}$ with ratio
$|\sigma _{L_n}|(p_n)\to \infty $. Under this expansion, $V$ converges
to $\R^3$ with its usual flat metric and the straight lines $\g _q$
converge to parallel lines.
The graphical property that $\Phi $ restricts to any leaf component $L_V$ of ${\cal L}\cap V$
$\frac{\de }{2}$-close to $\vec{0}$ as a diffeomorphism
onto $D_{\overline{F}}(\vec{0},\de )$ gives that after passing to a subsequence, the
$H$-graphs $L_n\cap V$ converge after expansion of coordinates
to a minimal surface in $\R^3$ which is an entire graph. By the Bernstein Theorem, such
a limit surface is a flat plane. This contradicts that the  ratio of the homothetic expansion
coincides with the norm of the second fundamental form of $L_n\cap V$ at $p_n$ for all $n$.
This contradiction finishes the proof of Assertion~\ref{ass5.6},
and completes the proof of Proposition~\ref{propos5.2}.
\end{proof}

\begin{proposition}
\label{propos5.7}
Theorem~\ref{tt2} holds in the general case for the ambient manifold $N$.
\end{proposition}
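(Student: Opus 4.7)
The plan is to reduce to the already-established Euclidean case (Proposition~\ref{propos5.2}) by exploiting the fact that blowing up around $p$ flattens the ambient metric. Choose $r$ small enough that $\exp_p\colon B(\vec 0,r)\subset T_pN=\R^3\to B_N(p,r)$ is a diffeomorphism, and work throughout in the pulled-back metric $g$, which agrees with the Euclidean one at the origin to second order. For any sequence $\lambda_n\to\infty$, the rescaled metrics $\lambda_n^2 g$ on $\lambda_n B(\vec 0,r)$ converge in $C^k_{\rm loc}(\R^3\setminus\{\vec 0\})$ to the Euclidean metric for every $k$, while the hypothesis $|\sigma_{\cal L}|\,d_N(p,\cdot)\leq C$ together with the uniform graph lemma gives uniform $C^{2,\a}$ control on the rescaled weak $\frac{H}{\lambda_n}$-laminations $\lambda_n{\cal L}$. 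Hence every subsequential limit is a weak $0$-lamination of $\R^3\setminus\{\vec 0\}$, and the same application of Lemma~\ref{ass:flat} that yielded property \textbf{(P)} for $N=\R^3$ gives it here: the limit is a lamination by parallel planes.

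With this in hand, the structural arguments of Proposition~\ref{propos5.2} transfer almost verbatim. The Type~1--4 classification of components of $(\frac{1}{\ve}{\cal L})\cap A$, with its ``almost parallel'' normals and ``almost orthogonal'' intersections with geodesic spheres, is a statement about the rescaled picture where the metric is nearly Euclidean, so it survives in the normal coordinates around $p$. The mean curvature comparison used in Claim~A and in Assertion~\ref{ass5.3} only needs that small geodesic spheres of $N$ have mean curvature diverging to $+\infty$, which is a general Riemannian fact. Assertion~\ref{ass5.4} then extends a proper annular leaf $L\subset {\cal L}$ smoothly across $p$: finite area comes from almost-orthogonal intersection with small geodesic spheres, the Harvey--Lawson theorem together with the Allard monotonicity formula yields a well-defined finite density at $p$, and the regularity theorem of~\cite{gr1}---already noted to hold for Riemannian three-manifolds with bounded sectional curvature and positive injectivity radius, both automatic on a sufficiently small $B_N(p,r)$---produces the $C^1$ extension, upgraded to $C^\infty$ by standard elliptic regularity. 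Assertion~\ref{ass5.5} rules out type~4 components by the maximum principle for $H$-surfaces in $N$.

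Finally, the Bernstein-style bound of Assertion~\ref{ass5.6} transposes directly because blow-up flattens the metric. If there were a sequence of points $p_n\to p$ with $|\sigma_{L_n}|(p_n)\to\infty$ chosen to maximize the localized function $f_n$, then rescaling by $|\sigma_{L_n}|(p_n)$ sends the ambient metric to the Euclidean metric and the leafwise mean curvatures to zero, and the graphical property of the leaves over a geodesic disk in the limiting proper annulus $F$ forces an entire minimal graph in $\R^3$ as a limit---necessarily a plane by Bernstein, contradicting the normalization $|\sigma|=1$ at the blow-up center. Combined with the cylindrical-normal-coordinate monodromy step of Proposition~\ref{propos5.2}, this yields the extension of ${\cal L}$ across $p$ as a weak $H$-lamination of $B_N(p,r)$, as well as item~(2) of Theorem~\ref{tt2}. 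The main obstacle is purely bookkeeping: one has to check that the $C^k$-control on the rescaled metrics is quantitative enough for the uniform graph lemma, Allard's monotonicity formula and~\cite{gr1} to apply with constants independent of $n$, and that the various comparison arguments survive the switch from Euclidean geodesic spheres to metric spheres in $N$. Once these technical verifications are in place, the proof is essentially a translation of Proposition~\ref{propos5.2} into normal coordinates on~$N$.
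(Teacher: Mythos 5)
Your proposal is correct and follows essentially the same route as the paper: the authors' own proof of Proposition~\ref{propos5.7} simply passes to rescaled exponential coordinates at $p$ and observes that the arguments of Proposition~\ref{propos5.2} adapt with straightforward modifications (citing Cases IV and V of Theorem~1.1 in~\cite{mpr10} for the analogous adaptation in the minimal case). You have merely spelled out in more detail why each ingredient---Lemma~\ref{ass:flat} and property \textbf{(P)}, the mean curvature comparison with small geodesic spheres, the Harvey--Lawson/Allard/Gr\"{u}ter regularity chain, and the Bernstein blow-up---survives the switch to a nearly Euclidean metric, which is exactly the intended content of the paper's one-line reduction.
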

\begin{proof}
In the manifold setting for $N$, under rescaled exponential
coordinates from $p$ we have the same description as in the proof of Proposition~\ref{propos5.2},
and the arguments in that proof adapt with straightforward modifications; also see Cases IV and V of the proof
of Theorem~1.1 in~\cite{mpr10}. This completes the proof
of Theorem~\ref{tt2}.
\end{proof}

We next extend Corollary~7.1 in~\cite{mpr10} to the case of a weak $H$-lamination in a Riemannian
three-manifold. We remark that the statements in items~4, 5
of the corollary below do not have corresponding statements in Corollary~7.1 in~\cite{mpr10}.
Regarding item~5 of Corollary~\ref{corrs} and using its notation, we make the following definition.
\begin{definition}
\label{def5.8}
  {\rm
  The absolute mean curvature function of a weak CMC foliation ${\cal F}$ of $N-W$ is
  the function $|H_{\cal F}|\colon N-W\to [0,\infty )$ defined by
\[
|H_{\cal F}|(p)=\sup \{ |H_L|\ | \ L \mbox{ is a leaf of ${\cal F}$ passing through $p$}\} .
\]
Note that as in the case of $|\sigma _{\cal F}|$ given by (\ref{eq:sigma}), the function $|H_{\cal F}|$ is not necessarily continuous.
}
\end{definition}
The hypothesis of boundedness of $|H_{\cal F}|$  in item 5 of Corollary~\ref{corrs}
is essential: take
$N=\R^3$, $W=\{ \vec{0}\} $ and ${\cal F}$ the foliation of $\R^3-\{ \vec{0}\} $ by concentric spheres.

\begin{corollary}
\label{corrs}
Let $H\in \R $. Suppose that $N$ is a Riemannian
three-manifold, not necessarily complete. If $W\subset N$ is a closed countable subset and
${\cal L}$ is a weak $H$-lamination of $N-W$ such that for every $p\in W$ there exists
positive constants $\ve ,C$ (possibly depending on $p$) satisfying the following curvature estimate:
\begin{equation}
\label{eq:ce}
 |\sigma _{\cal L}|(q)\, d_N(q,W)\leq C\quad \mbox{ for all }q\in B_N(p,\ve )-W,
\end{equation}
then ${\cal L}$ extends across $W$ to a weak $H$-lamination of~$N$.
In particular:
\begin{enumerate}
\item The closure of any collection of the stable leaves of a weak $H$-lamination of $N-W$
extends across $W$ to a weak $H$-lamination of $N$ consisting of stable $H$-surfaces.

\item  The closure 
in $N$ of any collection of limit leaves of  a weak $H$-lamination ${\cal L}$  of $N-W$
is a  weak $H$-lamination of $N$, all whose leaves are stable $H$-surfaces.

\item If ${\cal F}$ is a weak $H$-foliation of $N-W$, then ${\cal F}$
extends across $W$ to a weak $H$-foliation of~$N$.

\item If ${\cal F}$ is a weak CMC foliation of $N-W$ and $H\in \R $, then the closure in
$N-W$ of any collection ${\cal F}(H)$ of leaves of ${\cal F}$ with constant curvature $H$
extends across $W$ to a weak $H$-lamination of $N$.

\item If ${\cal F}$ is a weak CMC foliation of $N-W$ with bounded absolute
mean curvature function, then ${\cal F}$ extends across $W$ to a weak CMC foliation of $N$.
\end{enumerate}
\end{corollary}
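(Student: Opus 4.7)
The plan is to prove the main extension statement by transfinite induction on the Cantor--Bendixson rank of $W$. Since $W\subset N$ is closed and countable, the iterated derived sets $W=W^{(0)}\supset W^{(1)}\supset\cdots$ become empty at some countable ordinal $\alpha_0$. The base case is the extension across an isolated point of $W$: for such a point $p$ there is a radius $\rho$ with $B_N(p,\rho)\cap W=\{p\}$, so on the punctured ball $B_N(p,\rho)-\{p\}$ we have $d_N(q,W)=d_N(q,p)$, the hypothesis~(\ref{eq:ce}) reads $|\sigma_{\cal L}|(q)\,d_N(q,p)\leq C$, and Theorem~\ref{tt2} extends ${\cal L}$ smoothly across $p$.

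For the inductive step I would follow the strategy of Corollary~7.1 in~\cite{mpr10}. Having extended ${\cal L}$ across $W-W^{(\alpha)}$ at stage $\alpha$, I extend at stage $\alpha+1$ across the isolated points of $W^{(\alpha)}$, and at limit stages take the union of the previously constructed extensions (which are compatible by uniqueness of the local extension in Theorem~\ref{tt2}). The delicate technical point, handled exactly as in the minimal case, is that for $p\in W^{(\alpha)}$ not isolated in $W$ itself, one must derive from~(\ref{eq:ce}) a local estimate $|\sigma|(q)\,d_N(q,p)\leq C'$ for the lamination already extended across the earlier singular points accumulating at $p$; this uses the uniform local second-fundamental-form bound furnished by conclusion~(1) of Theorem~\ref{tt2} at each previously extended puncture.

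For the consequences I would apply the main statement as a black box and only verify~(\ref{eq:ce}) in each case. For item~1, the Schoen--Ros--Rosenberg--Souam--Toubiana curvature estimate for stable $H$-surfaces gives $|\sigma_L|(q)\leq C/d_N(q,W)$ on any stable leaf $L$: on the extrinsic ball $B_N(q,d_N(q,W)/2)\subset N-W$ the leaf $L$ has no intrinsic boundary inside the ball (being closed as a leaf of the lamination), so the stability estimate applies. Stability passes to $C^2$-limits, so the extended leaves remain stable. Item~2 reduces to item~1 via the known fact (Meeks--P\'erez--Ros) that every limit leaf of a weak $H$-lamination is stable. For items~3 and~4, the universal curvature estimate for weak CMC foliations (Corollary~\ref{cor:scale}), applied to the same extrinsic balls $B_N(q,d_N(q,W)/2)\subset N-W$, yields exactly~(\ref{eq:ce}); in item~3 the foliation structure is preserved because Theorem~\ref{tt2} plugs each puncture with a single local $H$-leaf.

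The main obstacle is item~5, where one must also identify which values of the mean curvature appear at an extended puncture. The hypothesis that $|H_{\cal F}|$ is bounded is essential, as the foliation of $\R^3-\{\vec{0}\}$ by concentric spheres shows. For each $p\in W$ I would pick a sequence of leaves $L_n\in{\cal F}$ through points $q_n\to p$; Corollary~\ref{cor:scale} produces a uniform second-fundamental-form bound near $p$, and the bound on $|H_{\cal F}|$ allows extraction of a subsequence whose mean curvatures converge to some $H_0\in\R$. The $C^2$-limit of the $L_n$ is then an $H_0$-surface through $p$, lying in the extension of $\overline{{\cal F}(H_0)}$ supplied by item~4. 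The most delicate step, and the one I expect to require the most care, is to verify that assembling these extended leaves over all $p\in W$ together with ${\cal F}$ yields the local product structure of a weak CMC foliation on $N$, with no gaps and no tangential overlaps violating Definition~\ref{definition}.
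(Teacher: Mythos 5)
Your treatment of the main extension statement and of items 1--4 follows the paper's own route in all essentials: the paper also reduces everything to the Local Removable Singularity Theorem at isolated points by exploiting the topology of closed countable sets (it runs a maximal-extension-plus-Baire argument rather than transfinite induction on the Cantor--Bendixson derived sets, but these are interchangeable organizations of the same idea), verifies (\ref{eq:ce}) for items 1--2 via the curvature estimates for stable $H$-surfaces together with the stability of limit leaves, and for item 4 via Theorem~\ref{thm5.7} applied to regions exhausting the puncture. (The paper deduces item 3 from item 2 --- every leaf of a foliation is a limit leaf, hence stable --- rather than from Corollary~\ref{cor:scale}; both work, and the foliation property of the extension is simply that a closed set containing the dense set $N-W$ is all of $N$.) The ``delicate technical point'' you flag in the inductive step --- propagating an estimate of the form $|\sigma|(q)\,d_N(q,p)\le C'$ to the lamination already extended across punctures accumulating at $p$ --- is real, and the paper is no more explicit about it than you are; resolving it honestly requires a scale-invariant version of conclusion~1 of Theorem~\ref{tt2}, not just the qualitative boundedness stated there.

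The genuine gap is item 5, where you stop exactly where the proof begins. Two problems. First, Corollary~\ref{cor:scale} does not give ``a uniform second-fundamental-form bound near $p$'': the scale-invariant estimate degenerates as $q\to p$ (it gives roughly $|\sigma_{\cal F}|(q)\le A/d_N(q,p)$), so you cannot extract a nontrivial $C^2$-limit surface through $p$ from leaves through points $q_n\to p$; the graphical pieces of $L_n$ around $q_n$ a priori shrink to the point $p$. The paper instead uses the bound on $|H_{\cal F}|$ and the mean curvature comparison principle to show that the closure of every leaf near $p$ reaches the fixed sphere $S^2_N(p,\ve)$, then takes limits of points of the $L_n$ on a fixed smaller sphere, where the curvature \emph{is} bounded, to produce a leaf of ${\cal F}_1(H)$ with $H=\limsup H_n$ whose closure contains $p$; only then does item 4 apply. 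Second, and more importantly, the ``most delicate step'' you defer --- the local product structure of ${\cal F}\cup\{p\}$ at $p$ --- is the actual content of item 5, and the paper's resolution is a specific blow-up argument: since $|H_{\cal F}|$ is bounded, every blow-up limit of ${\cal F}_1$ at $p$ is a \emph{minimal} weak CMC foliation of $\R^3-\{\vec 0\}$, which by item 3 extends to a foliation of $\R^3$ by planes parallel to the tangent plane at $p$ of the extended leaf $\overline L$ supplied by item 4; the resulting independence of the blow-up limit from the rescaling sequence forces the leaves of ${\cal F}_1$ near $p$ to be uniform one-sided normal graphs with bounded gradient over $\overline L$, and that graphical description is what produces the weak CMC foliation structure across $p$. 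Without this (or an equivalent) argument, item 5 is not proved.
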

{\it Proof.}
Let ${\cal L}$ be a weak $H$-lamination of $N-W$ satisfying the curvature estimate (\ref{eq:ce}),
where $W$ is closed and countable. Since the extension of ${\cal L}$ across $W$
is a local question, it suffices to extend ${\cal W}$ in small, open extrinsic balls in $N$. Since
$W$ is countable, we can take these balls so that each of their boundaries
are disjoint from $W$, and their closures in $N$ are compact. It follows
that for every such ball $B_N$, the set $W\cap B_N$ is a complete
countable metric space. By Baire's Theorem, the set $W_0$ of isolated points of the locally compact
metric space $W\cap B_N$ is dense in $W\cap B_N$.

\begin{assertion}
\label{ass5.9new}
  In the above situation, ${\cal L}\cap B_N$ extends across $W_0$
to a weak $H$-lamination of $B_N-(W-W_0)$.
\end{assertion}
\begin{proof}
Consider an isolated point $p\in W\cap B_N$. By hypothesis, there exist $\ve ,C>0$
such that the inequality~(\ref{eq:ce}) holds. Taking $\ve >0$ smaller if necessary, we can
assume that the closed ball $\overline{B}_N(p,\ve )$ is compact and contained in $B_N$, its
boundary $S^2_N(p,\ve )$ is  disjoint from $W$ and that $B_{N}(p,\ve) \cap W=\{ p\} $.
By Theorem~\ref{tt2}, the induced local weak $H$-lamination ${\cal L}\cap  [B_{N}(p,\ve)-\{ p\} ]$
extends across $p$ to a weak $H$-lamination of $B_{N}(p,\ve)$. This proves the assertion.
\end{proof}

Consider the collection $\mathfrak{U}$ of open subsets
$U$ of $B_N$ such that $B_N-W\subset U$ and there exists a weak $H$-lamination
${\cal L}_U$ of $U$ whose restriction to $B_N-W$ coincides with ${\cal L}|_{B_N-W}$.
By Assertion~\ref{ass5.9new}, $B_N-(W-W_0)\in \mathfrak{U}$.
We claim that $\bigcup _{U\in \mathfrak{U}}U\in \mathfrak{U}$.
Note that if $U\in \mathfrak{U}$, then the related weak $H$-lamination ${\cal L}_U$
is unique (since leaves of ${\cal L}_U$ are analytic surfaces that coincide with
the leaves of ${\cal L}|_{B_N-W}$).  Given $U_{\a },U_{\be }\in \mathfrak{U}$
and given a point $x\in U_{\a }\cap U_{\be }$, the related weak $H$-laminations
${\cal L}_{\a },{\cal L}_{\be }$ that extend
${\cal L}|_{B_N-W}$ to $U_{\a }, U_{\be }$ satisfy
${\cal L}_{\a }|_{U_{\a }\cap U_{\be }}=
{\cal L}_{\be }|_{U_{\a }\cap U_{\be }}$ by the above uniqueness property.
Therefore, $\bigcup _{\a \in \Lambda }U_{\a }\in \mathfrak{U}$ and our claim is proved.

We want to prove that if $V:=\bigcup _{U\in \mathfrak{U}}U\in \mathfrak{U}$, then $V=B_N$, which will finish the proof of the first statement of the corollary.
Arguing by contradiction, suppose $B_N-V\neq \mbox{\O }$. Since $B_N-V \subset W\cap B_N$ is a non-empty closed subset of $W\cap B_N$, then $B_N-V$ is a complete countable
metric space and so, Baire's theorem again insures that the set $I$
of its isolated points is dense
in $B_N-V$. By Assertion~\ref{ass5.9new}, the $H$-lamination ${\cal L}_V$ obtained by extension of ${\cal L}$ to $V$
extends through every isolated point of $B_N-V$; hence $V\cup I\in \mathfrak{U}$. By definition of $V$, this implies that $V\cup I\subset V$, hence $I\subset V$. As $I\subset B_N-V$, then $I=\mbox{\O }$ which contradicts that $I$ is dense in $B_N-V$.
Now the proof of the first statement of Corollary~\ref{corrs} is complete.

Item 1 of the corollary follows from the already proven first statement and from
curvature estimates for stable $H$-surfaces (Schoen~\cite{sc3}, Ros~\cite{ros9}, see also
Theorem~2.15 in~\cite{mpr19}). By Theorem 4.3 of~\cite{mpr19} (see also Theorem~1 in~\cite{mpr18}),
limit leaves of a weak $H$-lamination are stable (if $H\neq 0$ they are two-sided;
in the minimal case, the two-sided cover of every limit leaf is stable). As the
collection of limit leaves of a weak $H$-lamination is closed, then item~2 of the corollary
follows from item~1. Item~3 is a direct consequence of item~2, as every leaf of a weak
$H$-foliation is a limit leaf.

To prove item 4, let ${\cal F}$ be a weak CMC foliation of $N-W$,
where $W$ is closed and countable, and let $H\in \R $. Reasoning as in the case of a
weak $H$-lamination, we can reduce the proof of the extendability of
any collection ${\cal F}(H)$ of leaves of ${\cal F}$ with constant mean curvature $H$
to the case in which ${\cal F}$ is  a weak CMC foliation
of a small open extrinsic ball $B_N$ with compact closure in $N$, such that
$W\cap S^2_N=\mbox{\O }$. Also the above argument based on
Baire's Theorem 
allows one to reduce the proof to the case that $B_N=B_N(p,\ve )$
where $p\in W$ is an isolated point of $W$ and $B_{N}(p,\ve) -\{p\}\subset
\Int(N)-W$. To prove that ${\cal F}(H)$ extends across $p$, it suffices to show
that for some small $\ve>0$, the induced local weak $H$-lamination ${\cal F}_1(H)={\cal F}(H)\cap
[B_{N}(p,\ve) -\{p\}]$
extends across $p$ to a weak $H$-lamination of $B_{N}(p,\ve)$.

Consider the weak CMC foliation ${\cal F}_1={\cal F}\cap [B_N(p,\ve )-\{ p\} ]$.
By the universal curvature estimate in Theorem~\ref{thm5.7}
applied to each of the compact three-manifolds with boundary $N(k)=
\overline{B_N}(p, \ve )-B_N(p,\frac{\ve }{k})$, $k\in \N$,
there exists a constant $A>0$ independent of
$k$ such that for each $k$,
we have
\begin{equation}
\label{eq:handbook}
|\sigma _{{\cal F}_1}|(q)\leq \frac{A}{\min\{{\rm dist}_N(q,\partial N(k)),
\frac{\pi }{\sqrt{\Lambda }}\}},
\qquad \mbox{ for all }q\in \mbox{Int}[N(k)],
\end{equation}
where $\Lambda \geq 0$ is an upper bound of the sectional curvature of
$N$ in $\overline{B}_N(p, \ve )$ and $|\sigma _{{\cal F}_1}|(q)$ is defined in
(\ref{eq:sigma}). Taking $\ve $ smaller if necessary (this does not change the
constant $\Lambda $), we can assume that $d_N(q,\partial N(k))\leq \frac{\pi }{\sqrt{\Lambda }}$
for all $k\in \N$. Thus, given $k\geq 3$ and $q\in N(k)\cap B_N(p,\frac{\ve }{2})$, we have
\begin{eqnarray*}
|\sigma _{{\cal F}_1}|(q)\, d_N(q,p)&=&
|\sigma _{{\cal F}_1}|(q)\, \min\{{\rm dist}_N(q,\partial N(k)),
{\textstyle \frac{\pi }{\sqrt{\Lambda }}}\} \, \frac{d_N(q,p)}{d_N(q,\partial N(k))}
\\
&\stackrel{(\ref{eq:handbook})}{\leq }&
A\, \frac{d_N(q,p)}{d_N(q,\partial N(k))}
\\
&=&A\, \frac{d_N(q,p)}{d_N(q,S^2_N(p,\ve /k))}\stackrel{(k\to \infty )}{\longrightarrow }A.
\end{eqnarray*}
Hence, the weak $H$-lamination ${\cal F}_1(H)$ satisfies the curvature estimate in
the hypothesis of Theorem~\ref{tt2}, and thus, ${\cal F}_1(H)$ extends across
$p$ as desired. This proves item~4 of the corollary.

Finally we prove item~5. Let ${\cal F}$ be a weak CMC foliation of $N-W$
with bounded absolute mean curvature function, where $W$ is closed and countable.
Similar arguments as in the previous cases show that we can reduce the proof of item~5
to the proof of the extendability of a weak CMC foliation ${\cal F}$ of a small open
extrinsic ball $B_N(p,\ve )$ with compact closure in $N$, where $p\in W$ is isolated in $W$
 and $\overline{B}_N(p,\ve )-\{ p\} \subset \mbox{Int}(N)-W$.
 To prove that ${\cal F}$ extends across $p$, it suffices to prove
that for some smaller $\ve>0$, the induced local weak CMC foliation ${\cal F}_1={\cal F}\cap
[B_N(p,\ve )-\{ p\} ]$ extends across $p$ to a weak CMC foliation of $B_{N}(p,\ve)$.
Since by hypothesis the absolute mean curvature function of the leaves of ${\cal F}_1$ is bounded,
we can choose $\ve >0$ sufficiently small so that for all $\de \in (0,\ve ]$, the
absolute mean curvature function of the (smooth) distance sphere $S^2_N(p,\de )$
is strictly greater than the maximum value of the absolute mean curvature of the leaves of
${\cal F}_1$. As an application of the mean curvature comparison
principle, we conclude that the closure of every leaf of ${\cal F}_1$ intersects $S_N^2(p,\ve )$
(see Claim~A in the proof of Lemma~\ref{ass:flat} for a similar argument).

Take a sequence $\{ p_n\} _n\subset B_N(p,\ve )-\{ p\} $ converging to $p$ as $n\to \infty $.
As ${\cal F}_1$ is a weak CMC foliation of $B_N(p,\ve )-\{ p\} $, for each $n\in \N$ there
exists at least one leaf $L_n$ of ${\cal F}_1$ with $p_n\in L_n$. Let $H_n$ be the
(constant) mean curvature of $L_n$ and let $H=\lim \sup H_n$. After replacing by a subsequence, we
may assume that $H=\lim _nH_n$. Let ${\cal F}_1(H)$ be the weak $H$-lamination of $B_N(p,\ve )-\{ p\} $
consisting of all leaves of ${\cal F}_1$ whose mean curvature is $H$.

We claim that $p$ lies in the closure of ${\cal F}_1(H)$ in $B_N(p,\ve )$.
To see this it suffices to show that given $k\in \N $, some leaf of ${\cal F}_1(H)$ intersects
$S_N^2(p,\frac{\ve }{k})$. Fix $k\in \N$. As $\{ p_n\} _n\to p$, then for $n$ sufficiently
large $p_n\in B_N(p,\frac{\ve }{k})$. As the closure of $L_n$ intersects $S^2_N(p,\ve )$
and $L_n$ is connected, then $L_n$ also intersects $S^2_N(p,\frac{\ve }{k})$. For each
$n\in \N$ large, pick a point $x_n\in L_n\cap S^2_N(p,\frac{\ve}{k})$. Since
$S^2_N(p,\frac{\ve }{k})$ is compact, after extracting a subsequence, the
$x_n$ converge  as $n\to \infty$ to a point $x\in S^2_N(p,\frac{\ve }{k})$.
As the mean curvatures of the $L_n$ converge to $H$, then
there passes a leaf $\wh{L}$ of ${\cal F}_1(H)$ through $x$, and our claim is proved.

\begin{assertion}
\label{ass5.11}
The weak CMC foliation ${\cal F}_1$ extends across $p$ to a
weak CMC foliation of $B_N(p,\ve )$ (and thus, the proof of item~5
of Corollary~\ref{corrs} is complete).
\end{assertion}
\begin{proof}
By the last claim and the already proven item~4 of this corollary,
${\cal F}_1(H)$ extends across $p$ to a weak $H$-lamination of $B_N(p,\ve)$.
Let $\overline{L}$ be the leaf of the extended weak $H$-lamination ${\cal F}_1(H)\cup \{ p\} $
passing through $p$ (thus, $L=\overline{L}-\{ p\} $ is a leaf of ${\cal F}_1$).
After possibly choosing a smaller $\ve$, we
may assume that $\overline{L}$ is a smooth embedded disk in
$\overline{B}_N(p,\ve)$ with compact boundary in $S^2_N(p,\ve )$. Using again the
curvature estimates (\ref{eq:handbook}) we get that for any
sequence of positive numbers $\lambda_n \to \infty$, a subsequence
of the punctured balls $\lambda_n[B_N(p,\ve)-\{ p\} ]$ converges as $n\to \infty $
to $\rth -\{ \vec{0}\} $ with its usual metric, and the
weak CMC foliations $\l _n{\cal F}_1$ converge to a limit weak CMC
foliation ${\cal F}_{\infty}$ of $\R^3-\{ \vec{0}\} $,
which is in fact a minimal foliation since the mean curvature of the leaves of ${\cal F}_1$ is
bounded. Note that one of the leaves of ${\cal F}_{\infty}$ is the
punctured plane $\Pi$ passing through $\vec{0}$, corresponding to the
blow-up of the tangent plane to the disk $\overline{L}$ at $p$. By item~3 of this corollary,
${\cal F}_{\infty}$ extends across the origin to a minimal foliation of
$\R^3$; since every leaf of this extended minimal foliation is a complete
stable minimal surface in $\R^3$, then every such leaf is a plane, which
must be parallel to $\Pi$. In particular, the
limit foliation ${\cal F}_{\infty}$ is independent of the sequence
$\lambda_n \to \infty$. In this situation, it follows that for $\ve$ sufficiently
small, the leaves in ${\cal F}_1$ can be uniformly locally expressed as non-negative
or non-positive normal graphs with bounded gradient over their
projections to $\overline{L}$. In particular, there is a weak CMC
foliation structure on ${\cal F}_1\cup \{ p\} $, and Assertion~\ref{ass5.11} is proved.
\end{proof}

\section{Proof of Theorem~\ref{thmspheresintrod}.}
\label{sectspheres}


We start by proving the $\rth$-version of Theorem~\ref{thmspheresintrod} in the more
general setting of weak CMC foliations.

\begin{theorem}
\label{thmspheres}
Suppose that ${ \cal F}$ is
a weak  CMC foliation of $\rth$ with a  closed
countable set $ \cal S$ of singularities (these are the points
where the weak CMC structure of ${\cal F}$ cannot be extended).  Then,
each leaf of ${\cal F}$ is contained in either a plane or a round sphere,
and $0\leq |{\cal S}|\leq 2$. Furthermore
if $\cal S$ is empty, then
$\cal F$ is a foliation by planes.
\end{theorem}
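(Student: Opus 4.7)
\emph{Proof plan.}

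For the case ${\cal S}=\emptyset $, applying Corollary~\ref{cor:scale} to the ball $\B (p,R)$ for every $R>0$ gives $|\sigma _{\cal F}|(p)\leq A/R$; letting $R\to \infty $ forces $|\sigma _{\cal F}|\equiv 0$. Every leaf is then totally geodesic and minimal, so by connectedness and properness ${\cal F}$ is a foliation of $\R^3$ by planes, as claimed. For ${\cal S}\neq \emptyset $, the same corollary applied with $R=d(p,{\cal S})$ yields the scale-invariant estimate
\[
|\sigma _{\cal F}|(p)\, d(p,{\cal S})\leq A,\qquad p\in \R^3-{\cal S},
\]
and, combining with the pointwise inequality $|H|\leq |\sigma |/\sqrt{2}$ satisfied on every leaf, this forces $|H_L|\, d(p,{\cal S})\leq A/\sqrt{2}$ for every $p$ on a leaf $L$. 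Consequently every non-minimal leaf $L$ is contained in the closed tube around ${\cal S}$ of radius $A/(\sqrt{2}\,|H_L|)$.

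I then classify the leaves of ${\cal F}$. Fix a leaf $L$ of mean curvature $H_L$ with closure $\overline L\subset \R^3$. The curvature estimate above verifies the hypothesis of the Local Removable Singularity Theorem~\ref{tt2} at every point of the closed countable set $\overline L\cap {\cal S}$, so a Baire-category iteration over the isolated points of successive Cantor--Bendixson derivatives (carried out as in the proof of Corollary~\ref{corrs}) extends $L$ smoothly through $\overline L\cap {\cal S}$ to a complete, embedded CMC surface $\widetilde L\subset \R^3$. Moreover, since ${\cal F}$ is a foliation with continuous mean-curvature function, a standard normalization of the variational field produced by nearby leaves yields a positive solution of the Jacobi equation on $L$, giving stability of $\widetilde L$ (compare with Corollary~\ref{corrs}(2)). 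For $H_L=0$, $\widetilde L$ is a complete, stable, embedded minimal surface in $\R^3$ and hence a plane by the classical theorem of do Carmo--Peng, Fischer-Colbrie--Schoen and Pogorelov (equivalently, by Lemma~\ref{lema1}). For $H_L\neq 0$, the tube estimate forces $\widetilde L$ to be bounded (once one knows ${\cal S}$ is bounded), hence compact, and Alexandrov's Theorem identifies $\widetilde L$ with a round sphere.

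Finally, to establish $|{\cal S}|\leq 2$: if ${\cal F}$ contains no plane leaf, then every leaf is a sphere, and the foliation necessarily reduces to the single concentric family around one point, giving $|{\cal S}|=1$. Otherwise, let $\Pi $ be any plane leaf of ${\cal F}$; it separates $\R^3$ into two open half-spaces, and two sphere centers $c_1,c_2$ lying in the same half-space would force sphere leaves of radius slightly exceeding $\tfrac{1}{2}|c_1-c_2|$ to intersect transversally, contradicting the foliation property. Hence $|{\cal S}|\leq 2$. The main obstacle I anticipate is verifying that ${\cal S}$ is bounded, without which the tube confinement of non-minimal leaves is vacuous; I would address this via a blow-down argument, extracting a sequence $q_n\in {\cal S}$ with $|q_n|\to \infty $ and rescaling ${\cal F}$ by $1/|q_n|$ to produce a limit weak CMC foliation of $\R^3$ outside a closed singular set, which is incompatible with the planar structure at infinity forced on the original foliation by the scale-invariant curvature estimate.
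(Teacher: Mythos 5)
Your reduction of the case ${\cal S}=\emptyset$ via Corollary~\ref{cor:scale} is fine, and the scale-invariant estimate $|\sigma_{\cal F}|(p)\,d(p,{\cal S})\leq A$ together with $|H|\leq |\sigma|/\sqrt{2}$ correctly confines non-minimal leaves near ${\cal S}$; this matches the second half of the paper's Assertion~\ref{ass6.2}. But two of your key steps do not hold up. First, leaves of a weak CMC foliation with varying mean curvature are \emph{not} stable: the normal variation field between nearby leaves satisfies the Jacobi equation only when the mean curvature is constant to first order in the transverse direction, and indeed the concentric spheres foliating $\R^3-\{\vec{0}\}$ (an admissible example here) are unstable. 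The paper uses only stability of \emph{limit} leaves of the sublaminations ${\cal F}(H)$ (Theorem~1 of~\cite{mpr18}); this is also precisely what rules out non-properness of a non-minimal leaf --- a point you skip, and without which ``bounded, complete and embedded'' does not yield compactness of $\overline{L}$, so Alexandrov's theorem cannot yet be invoked.

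Second, and more seriously, your count $|{\cal S}|\leq 2$ tacitly assumes that the spherical leaves near a singular point are concentric about it and that ${\cal S}$ is finite. Neither is given: the maximal family of spheres associated to an isolated $p\in{\cal S}$ can consist of spheres all tangent at $p$ (so their union $U_p$ is an open ball with $p$ on its boundary, not a concentric family), and a priori ${\cal S}$ could be an infinite closed countable set of nontrivial Cantor--Bendixson rank, with sphere-bubbles accumulating somewhere. The entire difficulty of the theorem lies here, and it is where the paper does its real work: for each isolated $p\in{\cal S}$ it forms the maximal open set $U_p$ swept out by spherical leaves (cases {\bf (A)} and {\bf (B)}), shows by a holonomy/graphing argument (property {\bf (P4)}) that if $U_p$ is an open ball then $\partial U_p\cap{\cal S}$ must contain a point that is \emph{not} isolated in ${\cal S}$ (Assertion~\ref{ass10.5}), and then runs a Baire-category induction on ${\cal S}-{\cal S}_0$ to conclude that no $U_p$ can be a ball; hence every $U_p$ is a half-space and $|{\cal S}|\leq 2$, after which the bounded case (Assertion~\ref{ass6.4}) finishes the proof. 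Your proposal contains no substitute for this mechanism, and the concluding blow-down sketch for boundedness of ${\cal S}$ is too vague to replace it.
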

{\it Proof.}
Note that if all leaves of ${\cal F}$ are minimal,
then ${\cal F}$ is a minimal foliation of $\R^3-S$, hence by item~3 of
Corollary~\ref{corrs},
${\cal F}$ extends to a minimal foliation of $\R^3$, which must then consist entirely of
parallel planes and the theorem holds in this case.
Therefore, in the sequel we may assume that ${\cal F}$ contains a leaf which is not minimal.

\begin{assertion}
\label{ass6.2}
Every non-minimal leaf of ${\cal F}$ is proper in $\R^3-{\cal S}$.
Furthermore, if ${\cal S}$ is bounded, then every non-minimal leaf of ${\cal F}$
is contained in a ball.
\end{assertion}
\begin{proof}
Consider a leaf $L$ of ${\cal F}$, with mean curvature $H$.
By item~4 of Corollary~\ref{corrs}, the collection ${\cal F}(H)$ of $H$-leaves in ${\cal F}$
extends across ${\cal S}$ to weak $H$-lamination of $\R^3$ and so, the
closure of $L$ in $\R^3$ is a weak $H$-lamination of $\R^3$.
If $H\neq 0$ and $L$ is not proper in $\R^3-{\cal S}$, then $\overline{L}$ contains
a limit leaf $L_1$, which is complete since $\overline{L}$ is a weak $H$-lamination
of $\R^3$. By Theorem~4.3 of~\cite{mpr19} (see also Theorem~1 in~\cite{mpr18}) applied to the
weak $H$-lamination $\overline{L}$,
$L_1$ is stable ($L_1$ is two-sided since its mean curvature is non-zero).
This contradicts that there are no stable complete $H$-surfaces in $\R^3$ for any
$H\neq 0$. This proves the first sentence in the assertion.

Next suppose ${\cal S}$ is bounded and take a non-minimal leaf $L\in {\cal F}$. If
there exists an extrinsically divergent sequence
of points $p_n\in L$, then the extrinsic distance $d_n$ from $p_n$ to ${\cal S}$
tends to infinity as $n\to \infty $ and hence, Corollary~\ref{cor:scale} applied to the
weak CMC foliation ${\cal F}\cap \B (p_n,d_n)$ of $\B (p_n,d_n)$ implies that
the second fundamental form of $L$ at $p_n$
decays to zero in norm, as $|\sigma _L|\leq |\sigma _{\cal F}|$.
In particular, the trace of the second fundamental form of $L$ must be zero since $L$ has constant
mean curvature, which gives a contradiction. Therefore, every non-minimal leaf $L$ of ${\cal F}$
lies in some ball of $\R^3$.
\end{proof}

\begin{assertion}
\label{ass6.3}
If a leaf $L$ of ${\cal F}$ is contained in a ball of $\R^3$, then its closure
$\overline{L}$ is a round sphere.
\end{assertion}
\begin{proof}
As in the proof of the previous assertion, the closure of $L$ in $\R^3$
has the structure of a weak $H$-lamination of $\R^3$ by item~4 of Corollary~\ref{corrs}.
Since there are no
bounded minimal laminations in $\R^3$ by the maximum principle, then $H\neq 0$.
By Assertion~\ref{ass6.2}, $L$ is proper in $\R^3-{\cal S}$, and thus,
$\overline{L}$ consists of a
single compact immersed surface which does not intersect itself transversely,
and whenever $\overline{L}$ intersects itself, it locally consists of two
disks with opposite mean curvature vectors. Hence, $\overline{L}$ is Alexandrov-embedded.
In this situation, Alexandrov~\cite{aa1} proved that
$\overline{L}$ is a round sphere.
\end{proof}

\begin{assertion}
\label{ass6.4}
If ${\cal S}$ is bounded, then Theorem~\ref{thmspheres} hold.
\end{assertion}
\begin{proof}
As ${\cal S}$ is bounded, then Assertions~\ref{ass6.2} and~\ref{ass6.3} give that
the closure of every non-minimal leaf of ${\cal F}$ is a round sphere.
Consider the collection ${\cal A}$ of all
spherical leaves of $\cal F$ union with ${\cal S}$.
Then, the restriction of ${\cal F}$ to the complement of the closure of ${\cal A}$
is a minimal foliation ${\cal F}_1$ of the open set $\R^3-(\overline{\cal A}\cup {\cal S})$. Applying item~3
of Corollary~\ref{corrs} to ${\cal F}_1$, $N=\R^3-{\cal A}$ and $W={\cal S}\cap N$, we conclude that
${\cal F}_1$ extends across ${\cal S}\cap N$ to a minimal foliation of $\R^3-{\cal A}$.
By item~4 of Corollary~\ref{corrs}, ${\cal F}_1$ extend across ${\cal S}$
to a minimal lamination of $\R^3$, and thus the extended leaves of ${\cal F}_1$ are complete.
As ${\cal F}_1$ consists of stable leaves by Theorem~1 in~\cite{mpr18}, then
item~1 of Corollary~\ref{corrs} insures that the extended leaves of ${\cal F}$ across ${\cal S}$
are complete stable minimal surfaces in $\R^3$, hence planes. As the weak CMC foliation ${\cal F}$ is now entirely formed by
punctured spheres and planes, then it is clear that ${\cal S}$ consists of one of two points.
This completes the proof of Assertion~\ref{ass6.4}.
\end{proof}

To prove Theorem~\ref{thmspheres} in the general case of a closed countable
set ${\cal S}\subset \R^3$,
we next analyze the structure of ${\cal F}$ in a neighborhood of an
isolated point $p\in {\cal S}$ (recall that the set of isolated points
in ${\cal S}$ is dense in ${\cal S}$ by  Baire's Theorem). Since $p$
is isolated in ${\cal S}$, we can choose a sphere $\esf^2(p,r)$ such
that $\overline{\B }(p,r)\cap {\cal S}=\{ p\} $. As ${\cal S}$ is
closed, then $\esf^2(p,r)$ is at positive distance from ${\cal S}$.
Since $|\sigma _{\cal F}|$ is locally bounded in $\R^3- {\cal S}$
(by definition of weak CMC lamination), an elementary
compactness argument shows that 
there is a uniform upper bound for the restriction to $\esf ^2(p,r)$
of the norms of the second
fundamental forms of all leaves in ${\cal F}$ which intersect $\esf
^2(p,r)$; in particular the absolute mean curvature of every such
leaf satisfies $|H|\leq C_1$ for some $C_1>0$. By item~5 of Corollary~\ref{corrs},
the mean curvature of the leaves of ${\cal F}$ is
unbounded in every neighborhood of $p$, since $p\in {\cal S}$.
Therefore, there exist leaves of ${\cal F}$ which intersect $\B
(p,r)$ and whose mean curvatures satisfy $|H|>C_1$. Every
such leaf $L$ is entirely contained in $\B (p,r)$ and thus,
Assertion~\ref{ass6.3} implies that the closure $\overline{L}$ of $L$ in $\R^3$ is a round sphere.
Note that either $p\in \overline{L}$ or $p$ lies in the open ball $\B _L$ enclosed by
$\overline{L}$ (otherwise a monodromy argument shows that ${\cal F}\cap \B _L$
is a ``product'' foliation by spheres, which produces a singularity $q\in \B _L$;
this contradicts that ${\cal S}\cap \B (p,r)=\{ p\} $).

The above arguments show that for every
isolated point $p$ of ${\cal S}$, one of the two following possibilities holds:
\begin{description}
\item[(A)]
There exists an open neighborhood $V_p$ of $p$ in $\R^3$ such that ${\cal F}$
restricts to $V_p-\{ p\} $ as a weak CMC foliation by round spheres and
$V_p\cap {\cal S}=\{ p\} $, see Figure~\ref{figure5new} left.
\item [(B)]
There exists an open ball $\B (q,R)\subset
\R^3$ such that $p\in \esf^2(q,R)$ and the weak CMC foliation
${\cal F}$ restricts to $\overline{\B } (q,R)-\{ p\} $ as a
union of round spheres punctured at $p$,
all tangent at $p$. In this case, we call
$V_p=\B (q,R)$, see Figure~\ref{figure5new} right.
\end{description}
\begin{figure}
\begin{center}
\includegraphics[height=5cm]{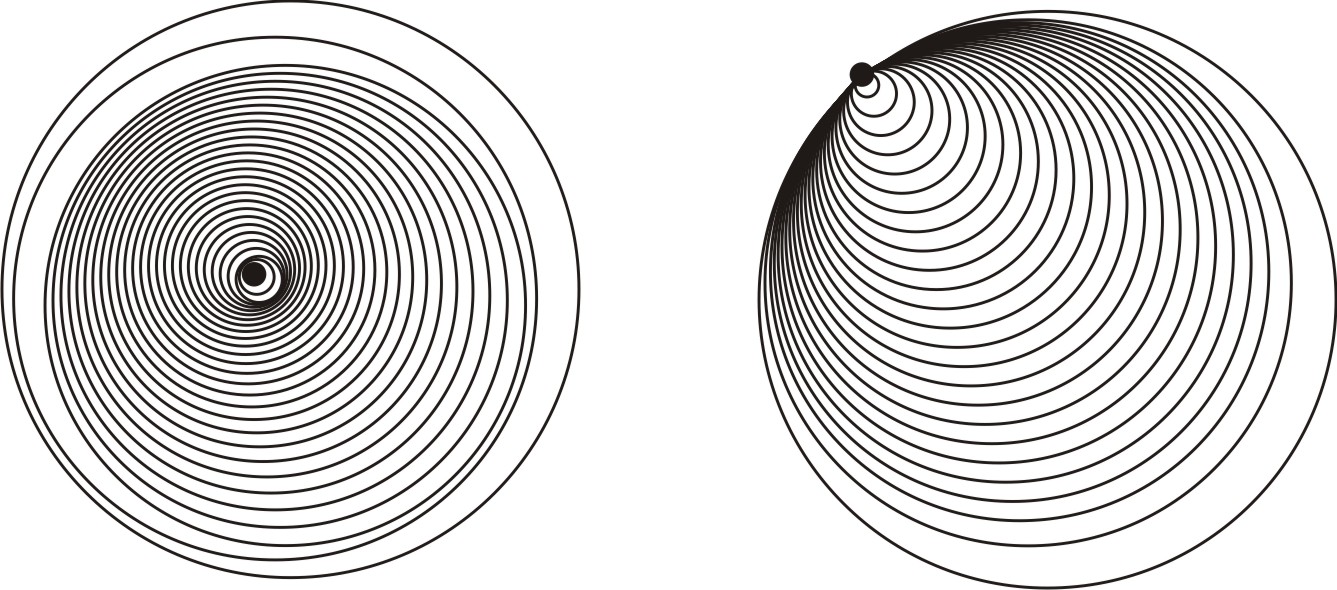}
\caption{Left: Case {\bf (A)} of the proof of
Theorem~\ref{thmspheres}. Right: Case {\bf (B)}. In both cases,
the dot represents an isolated singular point $p\in {\cal S}$.}
\label{figure5new}
\end{center}
\end{figure}
If possibility {\bf (A)} occurs for an isolated point $p$ of ${\cal S}$,
we define $U_p$ to be the maximal such open set $V_p$, with the ordering
given by the inclusion. Note that in this case, we have two mutually
exclusive possibilities:
\begin{description}
\item[(A1)] The boundary $\partial U_p$ of $U_p$ is empty; in this case, $U_p=\R^3$
and Theorem~\ref{thmspheres} is proved with ${\cal S}=\{ p\} $.
\item[(A2)] The boundary $\partial U_p$ is non-empty; in this case $\partial U_p$ is either a
round sphere (and $U_p$ is an open ball of $\R^3$ containing $p$),
or $\partial U_p$ is a plane (and $U_p$ is an open half-space containing $p$).
In both of these cases, $U_p$ only intersects ${\cal S}$ at the point $p$.
\end{description}
If possibility {\bf (B)} holds for an isolated point $p$ of ${\cal S}$,
we define $U_p$ to be the union of the maximal open 1-parameter family of spheres in ${\cal F}$,
possibly punctured at $p$, that contains the open ball $V_p$ described
in possibility {\bf (B)}, together with the point $p$ if this
union contains a spherical leaf of ${\cal F}$
that does not pass through $p$. As in case {\bf (A)}, we have two mutually
exclusive possibilities:
\begin{description}
\item[(B1)] The boundary $\partial U_p$ of $U_p$ is empty; in this case, $U_p=\R^3$
and Theorem~\ref{thmspheres} is again proved with ${\cal S}=\{ p\} $. Therefore, in the sequel we will
assume that for each isolated point $p\in {\cal S}$, we have $\partial U_p\neq \mbox{\O }$.
\item[(B2)] The boundary $\partial U_p$ is non-empty; in this case $\partial U_p$ is either a
round sphere (and in $U_p$ is an open ball of $\R^3$ with $p$ in its closure),
or $\partial U_p$ is a plane (and $U_p$ is an open halfspace with $p$ in its
closure). In both of these cases, $U_p$ only intersects ${\cal S}$ in
at most the point $p$.
\end{description}

Note that the case of two simultaneous
such maximal open sets $U_p\neq U_p'$ can occur in case  {\bf (B)}; for instance when ${\cal F}$
is the foliation of $\R^3-\{ \vec{0}\} $ given by the $(x_1,x_2)$-plane together
with all spheres passing through $p=\vec{0}$ and tangent to the $(x_1,x_2)$-plane
(in this case $U_p=\{ x_3>0\} $ and $U_p'=\{ x_3<0\} $). In the case that we have
 two possibilities for choosing
$U_p$, we will simply arbitrarily choose one such $U_p$ in our discussions below.

We next collect some elementary properties of these open sets $U_p$, which easily follow from
the fact that ${\cal F}$ is a foliation outside ${\cal S}$ and from the
description in possibilities {\bf (A)},
{\bf (B)} above.

\begin{description}
  \item[(P1)] If $p,q$ are distinct isolated points of ${\cal S}$, then $U_p\cap U_q=\mbox{\O }$.
  \item[(P2)] If $\{ p_n\} _n$ is a converging sequence of distinct isolated points of ${\cal S}$, then
  for $n$ sufficiently large, $U_{p_n}$ is an open  ball and the radii of the $U_{p_n}$ converge to zero.
\end{description}

Note that by maximality and a standard
monodromy argument, if $\partial U_p$ is a sphere then $\partial U_p
\cap {\cal S}\neq \mbox{\O }$.

\begin{assertion}
\label{ass10.5}
Given an isolated point $p\in {\cal S}$, suppose that
$U_p$ is an open ball. Then,  $\partial U_p\cap {\cal S}$
contains at least one point which is not isolated in ${\cal S}$.
\end{assertion}
\begin{proof}
Recall that $\partial U_p
\cap {\cal S}\neq \mbox{\O }$. Arguing by contradiction,
suppose that $\partial U_p\cap {\cal S}$ consists only of isolated points of
${\cal S}$. In particular, $\partial U_p \cap {\cal S}$ is finite, say
$\partial U_p \cap {\cal S}=\{ p_1,\ldots ,p_k\} $. Note that $p$ lies in
$\partial U_p \cap {\cal S}$ if and only if possibility {\bf (B)} above holds for $p$.
Then,  the above arguments show that around every point
$p_1\in\partial U_p \cap {\cal S}$, necessarily Case {\bf (B)}
occurs (exchanging $p$ by $p_1$), and that the following additional
property holds:
\begin{description}
\item[(P3)] If $p_1\neq p$, then
the related maximal open set $U_{p_1}$ is disjoint from $U_p$ and every leaf
in the restriction of ${\cal F}$ to $\overline{U_{p_1}}$ is a punctured
sphere or punctured plane whose closure only intersects $\overline{U_p}$ at $p_1$.
\end{description}
Analogously, if $p\in \partial U_p\cap {\cal S}$ is an isolated point where we have
two possibilities $U_p,U_p'$ for choosing $U_p$, then the same property {\bf (P3)}
holds for $p_1=p$ and $U_{p_1}=U_p'$, see Figure~\ref{newfigure}.
\begin{figure}
\begin{center}
\includegraphics[width=12cm]{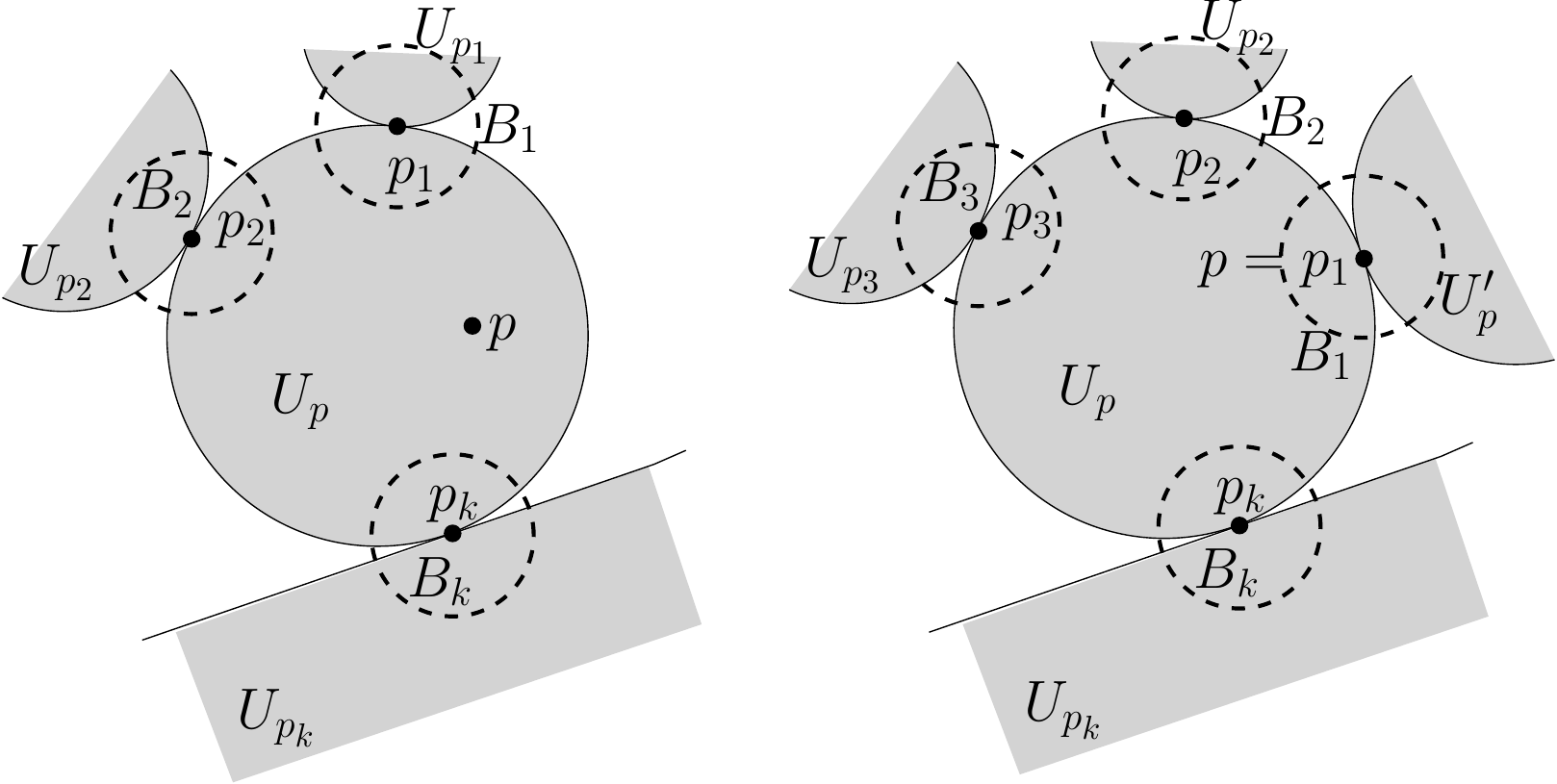}
\caption{On the left, possibility {\bf (A)} occurs for $p$. On the right,
possibility {\bf (B)} holds for $p$ and there are two possible choices
$U_p$,$U'_p$ for $U_p$. In both figures, we have represented one of
the points in $\partial U_p\cap {\cal S}$ (namely, $p_k$) so that the related
set $U_{p_k}$ is a halfspace.}
\label{newfigure}
\end{center}
\end{figure}

Let $B_1,\ldots ,B_k\subset \R^3$ be pairwise disjoint, small open balls centered at
the points $p_1,\ldots ,p_k$. As
the $p_i$ are isolated in ${\cal S}$, we can assume that ${\cal S}\cap \left(
\bigcup _{i=1}^kB_i\right) =\partial U_p\cap {\cal S}$. We denote by $D_i=
B_i\cap \partial U_p$, which is a spherical disk. Next we will prove the following property.
\begin{description}
\item[(P4)] Given $i=1,\ldots ,k$, if the radius of the ball $B_i$ is small enough, then
the intersection of ${\cal F}$ with the region $W_i=
B_i-[U_{p_i}\cup U_p]$ consists of a collection of annuli,
each of which can be expressed as a normal graph over its projection to $D_i-\{p_i\}$.
\end{description}
To see why {\bf (P4)} holds, we first prove that under
 blow-up from $p_i$, the induced foliation
${\cal F}\cap W_i$ converges smoothly to the punctured tangent plane to $\partial U_p$ at $p_i$.
Note that the region $W_i$ converges after such a blow-up to
the punctured tangent plane to $\partial U_p$ at $p_i$. Since we have a scale-invariant
uniform bound on the second fundamental of $({\cal F}\cap B_i)-\{ p_i\} $
(given by Theorem~\ref{thm5.7}, see the proof of Assertion~\ref{ass6.2} for a similar argument),
then the leaves of the induced foliation
${\cal F}\cap W_i$ are locally graphical over small geodesic disks of $\partial U_p-\{ p_i\} $
contained in $D_i$. It remains to check that these local graphs, when glued together inside a leaf of
${\cal F}\cap W_i$, do not define multi-valued graphs over the punctured spherical disk
$D_i-\{ p_i\} $. Arguing by contradiction, suppose that there exists a leaf $L_{W_i}$ of
${\cal F}\cap W_i$ which can be expressed by a multi-valued graph (not univalent) over
$D_i-\{ p_i\} $. Consider a small compact solid cylinder $C_i$ whose axis is the normal line to
$\partial U_p$ passing through $p_i$, such that both $\partial U_p$ and $\partial U_{p_i}$
intersect $C_i$ in compact closed disks bounded by distinct parallel circles.
Then, the intersection of $L_{W_i}$ with
$\partial C$ contains a spiraling curve $\G $ (with infinite length in both directions)
which is trapped between the circles
$\partial U_p\cap C$, $\partial U_{p_i}\cap C$. The curve $\G $ limits to two
disjoint closed curves
$\wh{\G }_1,\wh{\G }_2\subset \partial C$, which are topologically parallel to
$\partial U_p\cap C$, $\partial U_{p_i}\cap C$. Let $\wh{L}_1,\wh{L}_2$ be the leaves
of ${\cal F}$ that contain $\wh{\G }_1$, $\wh{\G }_2$, respectively (note that
a priori, $\wh{L}_1$ could coincide with $\wh{L}_2$). As $\wh{\G }_1,\wh{\G }_2$
consist of limit points of $\G $, then $\wh{L}_1$, $\wh{L}_2$ are limit leaves of
the sublamination ${\cal F}(H)$ of ${\cal F}$ consisting of the leaves of ${\cal F}$
with the same mean curvature $H$ as $L_{W_i}$. By item~4 of Corollary~\ref{corrs},
both $\wh{L}_1$, $\wh{L}_2$ extend smoothly across $p_i$. Since the $H$-surfaces
$\wh{L}_1\cup \{ p_i\} $, $\wh{L}_2\cup \{ p_i\} $ have the same mean curvature
vectors at $p_i$ (their orientations are induced by the one of the multi-valued graph
inside $L_{W_i}$), we contradict the maximum principle for $H$-surfaces.
Now property {\bf (P4)} is proved.

Since {\bf (P4)} holds, it follows that $\partial U_p-{\cal S}$,
considered to be a leaf of the weak CMC
foliation ${\cal F}$ of $\R^3-{\cal S}$, has trivial holonomy on
the exterior side of $\partial U_p$.
This implies that the leaves of ${\cal F}$ nearby $\partial U_p-{\cal S}$
and outside $U_p$ are topologically
punctured spheres which are graphs over $\partial U_p-{\cal S}$.
Therefore, these graphs extend
smoothly to embedded topological spheres with constant mean curvature;
hence the extended graphs are themselves round spheres.
This contradicts the maximality of $U_p$. This contradiction finishes
the proof of Assertion~\ref{ass10.5}.
\end{proof}

\begin{assertion}
  \label{ass6.6}
Theorem~\ref{thmspheres} holds.
\end{assertion}
\begin{proof}
Consider the set ${\cal S}_0$ of those points of ${\cal S}$ which are isolated;
recall that ${\cal S}_0$ is an open dense subset of ${\cal S}$ by Baire's Theorem.
If for every $p\in {\cal S}_0$ the related maximal open set $U_p$ given
just before Assertion~\ref{ass10.5} is a halfspace, then clearly
${\cal S}$ only consists of one or two singularities, and by Assertion~\ref{ass6.4},
the theorem holds in this case.
So it suffices to show that for every $p\in {\cal S}_0$, $U_p$ cannot
be an open ball. Arguing by contradiction, suppose that there exists
$p\in {\cal S}_0$ such that $U_p$ is an open ball. Since ${\cal S}-
{\cal S}_0\neq \mbox{\O }$ by Assertion~\ref{ass10.5} and ${\cal S}-{\cal S}_0$
is closed in ${\cal S}$, then the set
${\cal S}_1$ of isolated points in ${\cal S}-{\cal S}_0$ is non-empty
(in fact, ${\cal S}_1$ is dense in ${\cal S}-{\cal S}_0$ by
Baire's Theorem applied to the complete metric space ${\cal
S}-{\cal S}_0$ together with the collection of open dense subsets of
${\cal S}-{\cal S}_0$ given by $A_n=({\cal S}-{\cal S}_0)-\{
p_1,\ldots ,p_n\} $, where $\{ p_n\ | \ n\in \N \} $ is an
enumeration of the countable set ${\cal S}-({\cal S}_0\cup {\cal
S}_1)$). Pick a point $q\in {\cal S}_1$, which must be a limit of
a sequence of points $p_n\in {\cal S}_0$.
Since the $p_n$ converge to $q$, property {\bf (P2)} insures that for $n$ large,
$U_{p_n}$ is an open ball and the radii of $U_{p_n}$ converge to zero as $n\to \infty $.
Since each $\partial U_{p_n}\cap
{\cal S}$ contains at least one point in ${\cal S}-{\cal S}_0$ (by
Assertion~\ref{ass10.5}), then we contradict that $q$ is isolated in
${\cal S}-{\cal S}_0$. This contradiction proves that $U_p$ cannot
be an open ball, and therefore finishes the proof of Assertion~\ref{ass6.6}.
\end{proof}

We next prove the $\esf^3$-version of Theorem~\ref{thmspheres}.
\begin{theorem}
\label{thmspheres2}
Let ${ \cal F}$ be
a weak  CMC foliation of $\esf^3$ with a  closed
countable set $ \cal S$ of singularities (as in Theorem~\ref{thmspheres}, these
singularities are the points
where the weak CMC foliation structure of ${\cal F}$ cannot be extended).  Then,
each leaf of ${\cal F}$ is contained in a round sphere in $\esf^3$ and the number of
 singularities is $|{\cal S}|=1$ or $2$.
\end{theorem}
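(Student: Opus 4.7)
The plan is to run the proof of Theorem~\ref{thmspheres} in the ambient three-sphere, with adjustments for compactness and positive sectional curvature. All the tools used there (Theorem~\ref{thm5.7}, Corollary~\ref{corrs}, Theorem~4.3 of~\cite{mpr19}, Alexandrov-type rigidity, Baire category) are available in the $\esf^3$ setting, so the work is largely to track how each step survives on a compact ambient.

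First I would establish that every non-minimal leaf $L\in \cal F$, of mean curvature $H\neq 0$, has closure in $\esf^3$ equal to a round geodesic $2$-sphere. By item~4 of Corollary~\ref{corrs}, the sublamination $\cal F(H)$ extends across $\cal S$ to a weak $H$-lamination $\overline{\cal F(H)}$ of $\esf^3$; compactness of $\esf^3$ then makes $\overline{L}$ compact. To conclude $\overline{L}$ is a single Alexandrov-embedded closed surface, I would invoke that limit leaves of a weak $H$-lamination are stable (Theorem~4.3 of~\cite{mpr19}) together with Alexandrov reflection in an open hemisphere of $\esf^3$. The hemisphere hypothesis is met because item~5 of Corollary~\ref{corrs} forces the mean curvatures of leaves near each singularity to be unbounded, and Theorem~\ref{thm5.7} (applied as in Corollary~\ref{cor:scale}) bounds the extrinsic diameter of large-$|H|$ leaves; such leaves sit in small geodesic balls, Alexandrov reflection applies, and gives geodesic spheres. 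A continuity argument along the leaf space of $\cal F$ (the set of spherical leaves is open and closed, using uniqueness of nearby CMC perturbations of a geodesic sphere) then propagates sphericity to every leaf. Any minimal leaf must in particular be a totally geodesic great $2$-sphere.

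Second, I would show $\cal S\neq \mbox{\O }$. If $\cal S$ were empty, the previous step would give a foliation of the closed manifold $\esf^3$ entirely by geodesic $2$-spheres. By Reeb stability (each leaf is compact with finite fundamental group), $\cal F$ would be a locally trivial $\esf^2$-fibration over a closed one-manifold, necessarily $S^1$. But every such bundle has fundamental group $\Z$ or $\Z/2$, contradicting $\pi_1(\esf^3)=0$. Third, to bound $|\cal S|$, I would mimic the analysis preceding Assertion~\ref{ass6.6}: at each isolated $p\in \cal S$ the dichotomy (A)/(B) applies, and the Baire-type argument of Assertion~\ref{ass10.5} (the set of isolated singularities is dense, and the maximal spherical neighborhoods $U_p$ must accumulate at least one non-isolated singular point unless $\partial U_p$ is planar/great) shows every point of $\cal S$ is isolated. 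The compact topology of $\esf^3$ together with the structure of the $U_p$ then forces either $|\cal S|=2$ (two antipodal-type centers of a concentric family) or $|\cal S|=1$ (a single common tangency of a family of spheres foliating $\esf^3-\{p\}$).

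The main obstacle is the first step: unlike in $\R^3$, neither complete stable CMC surfaces nor Alexandrov-embedded closed CMC surfaces in $\esf^3$ are automatically geodesic spheres (Clifford tori are minimal and complete). The hemisphere restriction of Alexandrov reflection must therefore be enforced quantitatively, by coupling the $|H|\to \infty$ behavior near singularities (item~5 of Corollary~\ref{corrs}) with the scale-invariant curvature estimate of Theorem~\ref{thm5.7} to confine high-$|H|$ leaves to small metric balls. A subsidiary subtlety is the propagation from spherical leaves to all leaves: it must be run on the leaf space of $\cal F$, using maximum-principle rigidity for CMC graphs over a geodesic sphere in $\esf^3$, rather than a global extrinsic argument as in the Euclidean case.
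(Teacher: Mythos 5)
Your overall strategy for the main body of the argument matches the paper's: extend each ${\cal F}(H)$ across ${\cal S}$ via item~4 of Corollary~\ref{corrs}, establish Alexandrov-embeddedness of the compact closures, use the blow-up of the absolute mean curvature near an isolated singularity (item~5 of Corollary~\ref{corrs}) together with the scale-invariant curvature estimate to trap high-$|H|$ leaves in a geodesic ball of radius less than $\frac{\pi}{2}$, apply Alexandrov reflection inside a hemisphere, and then rerun the $U_p$/Baire analysis of Assertions~\ref{ass10.5} and~\ref{ass6.6}. That part is sound and is essentially what the paper does.

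However, there is a genuine gap in your second step, the proof that ${\cal S}\neq\mbox{\O}$. Your argument assumes that if ${\cal S}=\mbox{\O}$ then ``the previous step would give a foliation of $\esf^3$ entirely by geodesic $2$-spheres,'' but the only mechanism your first step offers for sphericity is the confinement of leaves to small balls, which is driven by $|H|\to\infty$ near points of ${\cal S}$. When ${\cal S}=\mbox{\O}$ that mechanism is vacuous: there is nothing forcing any leaf into a hemisphere, and (as you yourself note when discussing Clifford tori) neither completeness nor Alexandrov-embeddedness forces a CMC surface in $\esf^3$ to be a geodesic sphere. The CMC foliation of $\esf^3$ minus two great circles by the constant mean curvature tori at fixed distance from a great circle shows concretely that leaves of a CMC foliation of an open subset of $\esf^3$ need not be spherical, so your Reeb-stability contradiction never gets off the ground in the singularity-free case. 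The paper closes this case by an independent and order-reversed argument: since $\esf^3$ is compact and $|H_{\cal F}|$ is locally bounded, there is a leaf of maximal absolute mean curvature; by Proposition~5.4 in~\cite{mpr19} that leaf is stable, contradicting the non-existence of complete stable constant mean curvature surfaces in $\esf^3$. You need this (or an equivalent) stability argument before, not after, the sphericity analysis. A smaller but related weakness is your unsupported assertion that any minimal leaf is a great sphere; a priori a minimal leaf could be a Clifford torus, and ruling this out again comes from the global $U_p$ structure and stability of limit leaves rather than from the hemisphere reflection.
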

\begin{proof}
This theorem can be proven with minor modifications
of the proof in the $\R^3$ case. One starts by proving that
if ${\cal F}$ is a weak CMC foliation of $\esf^3-{\cal S}$
with ${\cal S}$ closed, then ${\cal S}$ cannot be empty. This holds
since otherwise, as $\esf^3$ is compact and the absolute mean curvature
function of ${\cal F}$ is locally bounded, there exists a leaf in ${\cal F}$
of maximal absolute mean curvature, which
by Proposition~5.4 in~\cite{mpr19} must be stable. This contradicts the non-existence of
complete, stable surfaces with constant mean curvature in $\esf^3$.

By item~4 of Corollary~\ref{corrs}, for any $H\in \R$, the
weak $H$-lamination ${\cal F}(H)$ of ${\cal F}$ consisting of all leaves of constant mean curvature
$H$ extends smoothly across the  singular set ${\cal S}$ of ${\cal F}$ to a collection
${\cal F}(H)'=\{ \overline{L}\ | \ L\in {\cal F}(H)\} $ of compact immersed $H$-surfaces.
The non-existence of limit leaves of ${\cal F}(H)'$ implies that this collection of surfaces
in finite. 

Next we show that $\overline{L}$ is Alexandrov-embedded
for every leaf $L$ of ${\cal F}(H)$. Observe that $\overline{L}$ cannot have transversal intersections,
as ${\cal F}$ is a weak CMC foliation. If $\overline{L}$ is not embedded, then $H\neq 0$
and there exists $p\in \overline{L}$ such that locally around $p$, $\overline{L}$ consists of two
disks that lie at one side of each other with opposite mean curvature vectors. As $\overline{L}$
is compact, there exists some small $\ve >0$ such that
\[
\overline{L}(\ve )=\{ \exp _p(tN_p)\ | \ p\in \overline{L}, t\in [0,\ve )\}
\]
is an embedded $\ve $-neighborhood
on the mean convex side of $\overline{L}$, where $N$ stands for the unit normal vector field
to $\overline{L}$ for which $H$ is the mean curvature of $\overline{L}$.
In particular, the parallel surface $\overline{L}_{\ve /2}$ at distance $\frac{\ve}{2}$ from
$\overline{L}$ inside $\overline{L}(\ve)$ is embedded.
As every compact embedded surface separates $\esf^3$, then $\overline{L}_{\ve /2}$ divides $\esf^3$ into
two open domains, one of which, called $\Omega $, contains $\partial \overline{L}(\ve )-\overline{L}$.
The union of the closure of $\Omega $ in $\esf^3$ with the $\frac{\ve }{2}$-neighborhood
$\overline{L}(\frac{\ve }{2})\subset \overline{L}(\ve )$ of $\overline{L}$ can be viewed
as the image of a submersion of a
3-manifold with boundary into $\esf^3$, with its boundary image being $\overline{L}$. This
proves that $\overline{L}$ is Alexandrov-embedded.

Let $p\in {\cal S}$ be an isolated point in ${\cal S}$, which exists by Baire's Theorem.
As $p$ is isolated in ${\cal S}$, we can choose a geodesic sphere $S^2(p,r)$ in $\esf^3$
of small radius $r\in (0,\frac{\pi }{2})$
centered at $p$ such that the geodesic ball $B(p,r)$ enclosed by $S^2(p,r)$ satisfies
$B(p,r)\cap {\cal S} = \{ p\} $. As ${\cal S}$ is closed, then $S^2(p,r)$ is at positive distance
from ${\cal S}$. Since $|\sigma _{\cal F}|$ is locally bounded in $\esf^3 -{\cal S}$, an
elementary compactness argument shows that there is a uniform upper bound for the restriction
to $S^2(p,r)$ of the norms of the second fundamental forms of all leaves in ${\cal F}$ which intersect
$S^2(p,r)$; in particular the absolute mean curvature of every such leaf satisfies $|H|\leq C$
for some $C>0$. By item 5 of Corollary~\ref{corrs}, the mean curvature of the leaves of ${\cal F}$
is unbounded in every neighborhood of $p$, since $p\in {\cal S}$. Therefore, there exist leaves of
${\cal F}$ which intersect $B(p,r)$ and whose mean curvatures satisfy $|H| > C$. Every such leaf
$L$ is then entirely contained in $B(p,r)$. As $r<\frac{\pi }{2}$, then $\overline{L}$ is contained in a hemisphere,
and by a standard application of the Alexandrov moving plane technique, we conclude that
$\overline{L}$ is a sphere. As we did in the case of $\R^3$, one can consider the
maximal collection $U_p$ of round spheres in ${\cal F}$ around every isolated
point of ${\cal S}$. In this setting, Assertion~\ref{ass10.5} remains valid with
only straightforward modifications in its proof, as well as the Baire's Theorem
argument in the proof of Assertion~\ref{ass6.6}. We leave the remaining details for the reader.
\end{proof}
\begin{remark} {\em
Recall that by Theorem~\ref{thmspheres}, if a weak CMC foliation $\cF$ of $\rth$ has a countable
set of singularities $\cS$ (as a weak foliation), then $\cS$ consists of  at most two points.
If $|\cS|=0$, then $\cF$ is a foliations by parallel planes.
If $|\cS|=1$, then up to a translation we can assume $\cS=\{\vec{0}\}$ and we have two
cases:
\begin{enumerate}
\item Some leaf of $\cF$ has the origin in its closure. In this case, up to a rotation of
$\rth$ fixing $\vec{0}$, there are exactly two examples $\cF_1,\cF_2$, where
$\cF_1$ is the set of horizontal planes $\{x_3=t \mid t\leq 0\}$ together with the spheres
$\esf^2(p_t,t)$ with $p_t=(0,0,t)$ for all $t>0$,
and $\cF_2$ is the set of spheres tangent to the $(x_1,x_2)$-plane at $\vec{0}$
together with the  $(x_1,x_2)$-plane. In particular, in this Case 1 the weak foliation $\cF$ of
$\rth-\{\vec{0}\}$ is actually a foliation of  $\rth-\{\vec{0}\}$.
\item No leaf of $\cF$ has the origin in its closure, as in the particular foliation $\cF_0$
of $\R^3-\{ \vec{0}\} $ consisting of the set of spheres centered at the origin.
In this Case 2, the weak foliation $\cF$ can be isotoped to $\cF_0$
fixing the origin along the isotopy, and the spherical leaves can intersect themselves as
leaves of a weak foliation.
\end{enumerate}
In Theorem~\ref{corol6.6} below we will describe the structure of a singular weak CMC
foliation $\cF$  of a three-manifold in a neighborhood of an isolated singularity $p$
as being modeled on a weak singular CMC foliation of $\rth-\{\vec{0}\}$ with $|\cS|=1$;
in fact we will show that the weak foliation $\cF$ is closely approximated by exactly one
of the examples in the two cases above.
%
}
\end{remark}

\section{Structure of singular CMC foliations in a neighborhood of an isolated singularity.} \label{sec:localstructure}
Theorem~\ref{thmspheres} implies that the two possibilities represented in Figure~\ref{figure5new}
give canonical models for every weak CMC foliation
of $\R^3-\{ \vec{0}\} $ as a 1-parameter collection of spheres and planes with
one singularity occurring at the origin.
In fact, this description is a good model for the local structure
of any weak CMC foliation in a Riemannian three-manifold around an isolated
singularity, as the following theorem demonstrates.
\begin{theorem}
\label{corol6.6}
Let $B_N(p,r)$ be a metric ball in a Riemannian three-manifold and
${\cal F}$ be a weak CMC foliation of $B_N(p,r)-\{ p\} $.
Then:
\begin{enumerate}
  \item ${\cal F}$ extends across $p$ to a weak
  CMC foliation of $B_N(p,r)$ if and only if the
 absolute mean curvature function of ${\cal F}$ (see Definition~\ref{def5.8}) is bounded.
  \item  Suppose that the absolute mean curvature function of ${\cal F}$ is unbounded in
  $B_N(p,\frac{r}{2}) $.
  Then, there exists $r_0\in (0, r)$
such that:
  \begin{description}
  \item[{\it (2A)}] For every sequence $\l _n>0$
  with $\l _n\to \infty $, the blow-up weak CMC foliations $\l _n[{\cal F}\cap B_N(p,r_0)]$
  of $\l _nB_N(p,r_0)-\{ p\} $
  converge (after extracting a subsequence) to a non-flat weak CMC foliation of $\R^3-\{ \vec{0}\} $.
  \item[{\it (2B)}] There exists  $H_0>0$ such that the closure in $B_N(p,r_0)$
  of every  leaf of ${\cal F}\cap B_N(p,r_0)$ with absolute mean curvature $H> H_0$ is an embedded
  $H$-sphere that bounds a subball of $B_N(p,r)$ that contains $p$ in its closure.
  \end{description}
\end{enumerate}
\end{theorem}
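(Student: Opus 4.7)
The plan divides into the two items. For part~1, the reverse implication is exactly item~5 of Corollary~\ref{corrs} with $W=\{p\}$, while the forward implication is immediate: if ${\cal F}$ extends to a weak CMC foliation of $B_N(p,r)$, then $|\sigma_{\cal F}|$ is locally bounded by Definition~\ref{definition}, so $|H_{\cal F}|\leq |\sigma_{\cal F}|/\sqrt{2}$ is bounded on the compact set $\overline{B}_N(p,r/2)$. For part~2, assume $|H_{\cal F}|$ is unbounded in $B_N(p,r/2)$; by part~1 this unboundedness must in fact hold in every punctured neighborhood of $p$. I would first apply Theorem~\ref{thm5.7} to the compact three-manifolds with boundary $\overline{B}_N(p,r/2)-B_N(p,\varepsilon)$ and let $\varepsilon\to 0^+$, obtaining constants $A>0$ and $r_0\in (0,r/2)$ with
\[
|\sigma_{\cal F}|(q)\, d_N(q,p)\leq A\qquad\text{for all }q\in B_N(p,r_0)-\{p\}.
\]
Since $|H|\leq |\sigma|$, every leaf $L$ of ${\cal F}$ of absolute mean curvature $|H|$ satisfies $L\cap B_N(p,r_0)\subset \overline{B}_N(p, A/|H|)$.

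I would establish \emph{(2B)} first, as its conclusion feeds back into \emph{(2A)}. Set $H_0:=2A/r_0$ and argue by contradiction: suppose there is a sequence of leaves $L_n$ of ${\cal F}\cap B_N(p,r_0)$ with $|H_n|:=|H(L_n)|\to\infty$ whose closures fail to be embedded $H_n$-spheres bounding a subball of $B_N(p,r)$ containing $p$ in its closure. Rescaling the ambient metric by $\lambda_n=|H_n|$, each $\tilde{L}_n:=\lambda_n L_n$ has mean curvature $\pm 1$ and lies in $\overline{\B}(\vec{0},A)$ in the nearly Euclidean rescaled metric. Using the compactness machinery from the proof of Theorem~\ref{thm5.7} (harmonic coordinates, the uniform graph lemma, Schauder estimates, Arzelà-Ascoli), I would pass to a subsequence so that $\lambda_n[{\cal F}\cap B_N(p,r_0)]$ converges to a weak CMC foliation ${\cal F}_\infty$ of $\R^3-\{\vec{0}\}$ and $\tilde{L}_n\to \tilde{L}_\infty$ in $C^2$. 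By Theorem~\ref{thmspheres} applied to ${\cal F}_\infty$, every leaf is contained in a round sphere or plane; since $H(\tilde{L}_\infty)=\pm 1$, $\tilde{L}_\infty$ is a unit round sphere, and $C^2$-convergence forces $\tilde{L}_n$ to be an embedded topological sphere for $n$ large. Moreover, if $\vec{0}$ were not a singularity of ${\cal F}_\infty$, then ${\cal F}_\infty$ would extend to all of $\R^3$ and Theorem~\ref{thmspheres} would force it to consist only of parallel planes, contradicting the presence of $\tilde{L}_\infty$; hence $\vec{0}$ is a singularity of ${\cal F}_\infty$, and the Case~(A)--(B) description from the proof of Theorem~\ref{thmspheres} places $\vec{0}$ in the closure of the ball bounded by $\tilde{L}_\infty$. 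Undoing the rescaling contradicts the assumption and establishes (2B).

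For \emph{(2A)}, given any $\lambda_n\to\infty$, the curvature estimate and the same compactness procedure produce a subsequence along which $\lambda_n[{\cal F}\cap B_N(p,r_0)]$ converges to a weak CMC foliation ${\cal F}_\infty$ of $\R^3-\{\vec{0}\}$. To see that ${\cal F}_\infty$ is not a foliation by parallel planes, I would invoke (2B): since $|H_{\cal F}|$ is unbounded in every neighborhood of $p$ and varies continuously along the local 1-parameter family of leaves (the uniform curvature bound gives smooth transverse dependence), the set of attained values of $|H|$ on leaves meeting $B_N(p,r_0)$ contains an unbounded interval. For each $n$ large I then choose a leaf $L_n'$ with $|H(L_n')|=\lambda_n$; by (2B), $L_n'$ is a topological sphere and $L_n'\subset \overline{B}_N(p,A/\lambda_n)$, so $\tilde{L}_n':=\lambda_n L_n'$ lies in $\overline{\B}(\vec{0},A)$ with mean curvature $\pm 1$, and a further subsequence converges to a round-sphere leaf of ${\cal F}_\infty$ of nonzero mean curvature, which is therefore non-flat. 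The main obstacle is the (2B) step of locating $\vec{0}$ in the closure of the ball bounded by $\tilde{L}_\infty$: this is where the full classification in Theorem~\ref{thmspheres}, together with the internal Case~(A)--(B) dichotomy from its proof, is genuinely needed. A secondary technical hurdle is establishing $C^2$-convergence of the rescaled leaves $\tilde{L}_n$ themselves (rather than just Hausdorff convergence of the underlying foliations), which follows from the uniform curvature estimate combined with the uniform graph lemma for CMC surfaces in the nearly Euclidean ambient metric.
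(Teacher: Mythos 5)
Your overall architecture (item~1 from item~5 of Corollary~\ref{corrs}; the scale-invariant estimate $|\sigma_{\cal F}|(q)\,d_N(q,p)\leq A$ from Theorem~\ref{thm5.7}; blow-up limits classified by Theorem~\ref{thmspheres}) matches the paper, but both of your main steps have genuine gaps. For \emph{(2B)}, your argument extracts everything from the assertion that $\tilde{L}_n=\lambda_nL_n$ converges in $C^2$ to a single round-sphere leaf $\tilde{L}_\infty$ of ${\cal F}_\infty$, and that this forces $\tilde{L}_n$ to be an embedded sphere bounding a ball with $p$ in its closure. This presupposes facts you have not established: (i) that $L_n$ is proper with compact closure --- a leaf of a weak CMC foliation can a priori be non-proper, in which case its closure is a sublamination and the Hausdorff limit of $\overline{\tilde{L}_n}$ need not be a single leaf; (ii) that the convergence has multiplicity one, without which $C^2$-closeness to a round sphere does not yield embeddedness of $\tilde L_n$; and (iii) --- most seriously --- that $p$ lies in the closure of the ball bounded by $\overline{L_n}$. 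Point (iii) cannot be read off from the limit picture at all, because the convergence of the rescaled foliations takes place only on compact subsets of $\R^3-\{\vec 0\}$: a limit sphere passing through $\vec 0$ is perfectly consistent with the approximating spheres bounding balls that narrowly miss $p$. The paper supplies separate arguments for each of these: properness and compactness via the non-existence of complete $H$-surfaces with stable two-sided cover in small balls (Assertions~\ref{ass6.8new} and~\ref{ass6.8}, using item~4 of Corollary~\ref{corrs} and Proposition~5.4 of~\cite{mpr19}), embeddedness via transverse orientability of ${\cal F}$ on the simply connected set $B_N(p,r_0)-\{p\}$ plus a comparison argument at $p$ (Assertion~\ref{ass6.9}), the location of $p$ inside $\Delta(\overline L)$ via the stability of a leaf of maximal mean curvature in $\Delta(\overline L)$ (Assertion~\ref{ass6.9bis}), and the sphere topology via a Morse-theoretic argument with the distance function (Assertion~\ref{ass6.10}). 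None of these is a formality, and your single limit argument does not substitute for them.

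For \emph{(2A)}, the step ``choose a leaf $L_n'$ with $|H(L_n')|=\lambda_n$'' rests on the claim that the attained mean curvature values contain an unbounded interval because $|H_{\cal F}|$ ``varies continuously along the local 1-parameter family of leaves.'' This is exactly what fails for weak CMC foliations: the paper stresses (Definition~\ref{def5.8} and Remark~3.5) that $|H_{\cal F}|$ need not be continuous, and near the singularity there is no global transverse 1-parameter structure, so no intermediate value argument is available; the attained mean curvatures could have large gaps. This is why the paper proves Assertion~\ref{ass7.15}, a nontrivial argument (using the total ordering of the balls $\Delta(L)$ under inclusion, the finite intersection property on $S^2_N(p,\frac{1}{\lambda_n})$, and a careful choice of auxiliary points) producing, for each $n$, a spherical leaf inscribed in $\overline B_N(p,\frac{1}{\lambda_n})$ and touching its boundary sphere; rescaling that leaf by $\lambda_n$ gives a leaf of ${\cal F}_\infty$ trapped in the closed unit ball with a point on the unit sphere, hence non-flat. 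Your selection mechanism needs to be replaced by something of this kind.
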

{\it Proof.} Item~1 of this theorem follows directly from item~5 of Corollary~\ref{corrs}.

In the sequel we will assume that the absolute mean curvature function of ${\cal F}$
is unbounded in $B_N(p,\frac{r}{2}) $, and so,
for every $r_0\in (0,r)$ the absolute mean curvature function of ${\cal F}\cap B_N(p,r_0)$ is
also unbounded. Choose $r_0\in (0,r)$ sufficiently small so that $r_0$ is less than
the injectivity radius function of $N$ at $p$ and so that the geodesic spheres in
$B_N(p,r_0)$ centered at $p$ have positive (not necessarily constant) mean curvature
with respect to the inward pointing normal vector.
By Theorem~\ref{thm5.7}, ${\cal F}\cap B_N(p,r_0)$ satisfies the curvature estimate
$|\sigma_{\cal F}|(q)\, d_N(q,p)\leq C$ for some constant
$C>0$ independent of $q\in B_N(p,r_0)-\{ p\} $, where $|\sigma _{\cal F}|$ is given by
(\ref{eq:sigma}) (see the proof of item~4 of Corollary~\ref{corrs} for a similar argument).
It follows from this curvature estimate that
blow-up rescalings of ${\cal F}\cap B_N(p,r_0)$ of the form $\lambda_n [{\cal F}\cap B_N(p,r_0)]\subset
\lambda_n [B_N(p, r_0)-\{p\}]$ for $\lambda_n\to \infty$, have subsequences that
converge to a weak CMC foliation ${\cal F}_{\infty}$ of
$\rth-\{\vec{0}\}$, which gives item~(2A) of the corollary, except for the condition that the
limit CMC foliation ${\cal F}_\infty$ is non-flat,
whose proof we postpone for the moment.


\begin{assertion}
\label{ass6.8new}
For $r_0>0$ small enough, the ball $B_N(p,r_0)$ contains no complete $H$-surfaces
whose two-sided cover is stable, for any value of $H\in \R $.
\end{assertion}
\begin{proof}
Fix $r_0>0$ small, satisfying the properties previous to the assertion. By the main theorem
 in~\cite{rst1}, there exists a uniform bound for the second fundamental form of any complete
$H$-surface in $B_N(p,r_1)$ whose two-sided cover is stable, which is independent of the
value of~$H$. In particular, $|H|$ is also bounded from above by a universal constant $H'>0$.
Now choose $r_1\in (0,r_0)$ small enough so that the distance spheres
$S^2_N(p,\de )$, $\de \in (0,r_1]$, all have mean curvature strictly greater
than $H'$, and suppose that $B(p,r_1)$ contains a complete $H$-surface $\Sigma $ whose
two-sided cover is stable. Then, there exists a point $q$ in the closure of $\Sigma $ which is at
maximal distance from $p$, and a sequence of small intrinsic open disks $D_n\subset \Sigma $
that converge to an open $H$-disk $D_{\infty }$ passing through $q$. As $D_{\infty }$ lies in
the closure of $\Sigma $, then $D_{\infty }$ lies at one side of the distance sphere
$S^2_N(p,d_N(p,q))$ at $q$. The mean comparison principle applied to $D_{\infty }$ and
$S^2_N(p,d_N(p,q))$ gives a contradiction as the mean curvature of $S^2_N(p,d_N(p,q))$ is strictly
less than $H$. This proves the assertion, after relabeling $r_1$ by $r_0$.
\end{proof}

\begin{assertion}
\label{ass6.8}
Take $r_0>0$ as in Assertion~\ref{ass6.8new}.
Let $H_0=H_0(r_0)$  be the supremum of the mean curvatures of the leaves of ${\cal F}$ that
intersect $S^2_N(p,r_0)$. Let
\[
{\cal A}(r_0)=\{ L \mbox{ leaf of }{\cal F}\cap
 B_N(p,r_0) \ : \ |H_L|>H_0\} ,
\]
where $|H_L|$ is the absolute mean curvature of $L$. Then, the closure
$\overline{L}$  in $B_N(p,r_0)$ of any $L\in {\cal A}(r_0)$ is a compact immersed surface.
\end{assertion}
\begin{proof}
 Take $L\in {\cal A}(r_0)$. As  $L$ is a leaf of a weak CMC foliation, item~4 of
Corollary~\ref{corrs} implies that the weak $H_L$-lamination $\overline{L}-\{ p\} $ of
$B_N(p,r_0)-\{ p \} $ extends across $p$ to a weak $H_L$-lamination of $B_N(p,r_0)$, all whose
leaves are complete (note that the extrinsic distance from the lamination $\overline{L}$ to
$S^2_N(p,r_0)$ is positive). If $L$ were not proper, then the lamination
$\overline{L}$ would contain a complete leaf whose two-sided cover is stable, which
contradicts  Assertion~\ref{ass6.8new}. Hence, $L$ is proper and so, $\overline{L}$ is a connected, compact immersed $H_L$-surface in $B_N(p,r_0)$.
\end{proof}

\begin{assertion}
\label{ass6.9}
With the notation of Assertion~\ref{ass6.8} and after possibly choosing a
smaller value of $r_0$, $\overline{L}$   is a compact embedded surface in  $B_N(p,r_0)$
for every $L\in {\cal A}(r_0)$.
\end{assertion}
\begin{proof}
First observe that if a codimension-one CMC foliation $\wh{{\cal F}}$ of a Riemannian manifold~$N$
is transversely orientable\footnote{A codimension-one foliation $\wh{{\cal F}}$ of a manifold $N$ is called
 {\it transversely orientable} if $N$ admits a continuous, nowhere zero vector field
whose integral curves are transverse to the leaves of $\wh{{\cal F}}$.}, then the
leaves of ${\cal F}$ are embedded; this follows from the fact that locally around a
point $q$ of self-intersection of a leaf $L$ of ${\cal F}$ with itself, $L$ consists of two small
embedded disks tangent at $q$ with non-zero opposite mean curvature vectors at $q$.
This contradicts the fact that the mean curvature vector of $L$ equals
$H_L\cdot (N_{\cal F})|_L$(up to sign), where $N_{\cal F}$ is a continuous unitary
vector field on $N$ orthogonal to the leaves of ${\cal F}$.

We now prove the assertion. Since $B_N(p,r_0)-\{ p\} $ is simply connected, then
${\cal F}\cap [B_N(p,r_0)-\{ p\} ]$ is transversely oriented. Thus, Assertion~\ref{ass6.8} and
the observation in the last paragraph imply that if $L\in {\cal A}(r_0)$, then $\overline{L}$ is a
compact immersed surface and $L$ is embedded. So it remains to show that for $r_0$
sufficiently small, every leaf $L$ in ${\cal A}(r_0)$ with $p\in \overline{L}$ satisfies that
$\overline{L}$ is embedded at $p$. Arguing by contradiction, suppose that for each $n\in \N$,
there exists a leaf $L_n$ in ${\cal A}(\frac{r_1}{n})$ such that the compact immersed surface
$\overline{L}_n$ is embedded outside $p$ and $p$ is a point of self-intersection of
$\overline{L}_n$. In particular, $\overline{L_n}$ has non-zero mean curvature and the mean
curvature vectors of $\overline{L_n}$ point in opposite directions at the two points of the
abstract surface $\overline{L_n}$ occurring at the self-intersection point $p$; this means that
we can consider two local intrinsic disks $D_n,\wh{D}_n\in \overline{L_n}$, that intersect only
at $p$, and which have opposite mean curvatures with respect to a fixed orientation of their
common tangent planes at $p$.

In particular, for some $n_0$ large, the surface $\overline{L_1}$ intersected with
$B_N(p,\frac{r_0}{n_0})$ consists of two disks, each of whose boundary curves lies in
$S^2_N(p,\frac{r_0}{n_0})$; let $D$ be one of these two disks. Choose $n_1>n_0$ such that the
absolute mean curvature of  $\overline{L_{n_1}}\subset B_N(p,\frac{r_0}{n_1})\subset
B_N(p,\frac{r_0}{n_0})$ is greater than the absolute mean curvature of $D$. Then,
$D_{n_1},\wh{D}_{n_1}\in \overline{L_{n_1}}$ are tangent to $D$ at $p$ and lie on the
same side of $D$, since $\overline{L_{n_1}}$ is connected and lies in one of the two
closed complements of $D$ in $B_N(p,\frac{r_0}{n_0})$. But  then at the point $p$, the disk $D$ lies
on the mean convex side of one of the disks  $D_{n_1},\wh{D}_{n_1}$, which contradicts
the mean curvature comparison principle since the absolute mean curvature of $D$ is less than
that of $\overline{L_{n_1}}$. This contradiction finishes the proof of the assertion.
\end{proof}

\begin{assertion}
\label{ass6.9bis}
Given $L\in {\cal A}(r_0)$, let $\Delta(\overline{L})$ be the compact subdomain of
$B_N(p,r_0)$ bounded by the compact embedded surface $\overline{L}$.  Then,
$\Delta(\overline{L})$  contains the point $p$.
\end{assertion}
\begin{proof}
Arguing by contradiction, suppose that $p\not \in \Delta(\overline{L})$. Note that the mean
convex side of the compact embedded surface $\overline{L}$ is
$\Delta (\overline{L})-\overline{L}$ (this follows from considering the innermost distance
sphere $S^2_N(p,\de)$ such that $\overline{L}\subset B_N(p,\de )$ and comparing the
mean curvature vectors of $\overline{L}$ and of $S^2_N(p,\de )$). As ${\cal F}$ does not have
any singularities in $\Delta (\overline{L})$, then there exists a leaf $L_1$ of ${\cal F}$ which
maximizes the mean curvature function of ${\cal F}$ restricted to $\Delta (\overline{L})$.
If $L_1=L$, then Proposition~5.4 in~\cite{mpr19} implies that $L$ is stable, which contradicts
Assertion~\ref{ass6.8new}. If $L_1\neq L$, then we can apply the same
Proposition~5.4 in~\cite{mpr19} on the mean convex side of $L_1$, which is strictly
contained in $\Delta (\overline{L})$ to contradict  Assertion~\ref{ass6.8new} again.
This proves the assertion.
%
%
\end{proof}

\begin{assertion}
\label{ass6.10}
For $r_0$ sufficiently small, the closure $\overline{L}$ of every $L\in {\cal A}(r_0)$
is topologically a sphere. 
\end{assertion}
\begin{proof}
Consider the distance function in $N$ to $p$, $d_p\colon B_N(p,r_0)\to [0,r_0)$.
By Assertion~\ref{ass6.9bis}, one of the following two exclusive possibilities holds
when $r_0$ is sufficiently small:
\begin{enumerate}[(A)]
\item $p\in \Int( \Delta(\overline{L}))$ for every $L\in {\cal A}(r_0)$ (or equivalently,
$L=\overline{L}$ for every $L\in {\cal A}(r_0)$ by Assertion~\ref{ass6.9bis}).
\item $p\in \overline{L}$ for every $L\in {\cal A}(r_0)$.
\end{enumerate}
Suppose first that we are in case (A) and pick $L\in {\cal A}(r_0)$. The restriction
$f=(d_p)|_L$ is strictly positive as $L$ is compact and $p\notin L$. For simplicity, we will
assume that $f$ is a Morse function, although this is not strictly necessary (also, one could
 perturb slightly $f$ to a Morse function so that the argument that follows remains valid).
Let $\wh{f}\colon \Delta (L)\to \R$ be the restriction of $d_p$ to  $\Delta (L)$. If $L$ is not
topologically a sphere, then $\Delta (L)$ is not a closed topological ball and thus, there exists
a critical point $q$ of $f$ in $L$ such that $\wh{f}^{-1}(0,f(q)-\ve ]$ is not homeomorphic
to $\wh{f}^{-1}(0,f(q)+\ve ]$ for all sufficiently small $\ve >0$. At such a critical point $q$,
$L$ is tangent to the distance sphere $S_N^2(p,f(q))$ at $q$ and the mean curvature
vector of $S_N^2(p,f(q))$ points outward from the mean convex side of $L$ at $q$ (as $f$ is
assumed to be a Morse function, then $f$ has index 1 at $q$). By contradiction, suppose that
the assertion fails in this case (A). Thus there exists a sequence $r_n\searrow 0$, leaves
$L_n\in {\cal A}(r_n)$, none of which is a sphere, and critical points $q_n\in L_n$ of
$f_n=\wh{f}|_{L_n}$ such that the mean curvature vector of $L_n$ at $q$ points outward
$B_N(p,d_p(q_n))$ at $q_n$. After rescaling by $\l _n=\frac{1}{d_p(p_n)}$ and extracting
a subsequence, the weak CMC foliations $\l _n{\cal F}_n$ converge as $n\to \infty $ to a
weak CMC foliation ${\cal F}_{\infty }$ of $\R^3-\{ \vec{0}\} $, whose leaves have closures
being  spheres or planes by Theorem~\ref{thmspheresintrod}. Furthermore,
${\cal F}_{\infty }$ contains a leaf $L_{\infty }$ that passes through the limit point
$q_{\infty }\in \esf^2(1)$ as $n\to \infty $ of the points $\l _nq_n$. In particular,
the closure $\overline{L_{\infty }}$ of $L_{\infty }$ is a plane or a sphere passing through
$q_{\infty }$. As $L_n$ is tangent to $S_N^2(p_n,d_p(q_n))$ at $q_n$, then $L_{\infty }$ is
tangent to $\esf^2(|q_{\infty }|)$ at $q_{\infty }$. Therefore, we have three possibilities for
$L_{\infty }$:
\begin{enumerate}
\item $L_{\infty }$ is a plane passing through $q_{\infty }$, orthogonal to this position vector.
\item $L_{\infty }$ is a sphere passing  through $q_{\infty }$ with mean curvature vector
pointing in the direction of $q_{\infty }$.
\item $\overline{L_{\infty }}$ is a sphere passing  through $q_{\infty }$ with mean curvature
vector pointing in the direction of $-q_{\infty }$.
\end{enumerate}
Also, observe that as the mean curvature vectors of $L_n$, $S_N^2(p,d_p(q_n))$ at their
common point $q_n$ point in opposite directions, then Case 3 above cannot occur. Cases 1, 2
cannot occur either, as then the distance function in $\R^3$ to $q_{\infty }$ has a
non-degenerate (global) minimum at $q_{\infty }$, which contradicts that $f_n$ has index 1 at
$q_n$ for all $n$. Hence the assertion holds  for $r_0$ sufficiently small in case (A).

In case (B) the argument is similar to the previous one, and we leave the details
to the reader. This finishes the  proof of the assertion.
\end{proof}

We next finish the proof of Theorem~\ref{corol6.6}. Assertions~\ref{ass6.9bis} and
 \ref{ass6.10} imply that item~(2B) of the corollary holds.
It only remains to show that given $\l _n>0$ with $\l _n\to \infty $, the weak CMC foliation
${\cal F}_{\infty}$ of $\rth-\{\vec{0}\}$ obtained as a limit of a subsequence of
$\l _n[ {\cal F}\cap B_N(p,r_0)]$ in the first paragraph of the proof of this corollary, is non-flat.
Fix $r_0>0$ small enough so that Assertion~\ref{ass6.10} holds.

\begin{assertion}
\label{ass7.15}
For $n$ large enough, there exists $L\in {\cal A}(r_0)$ such that $\Delta (L)\subset
\overline{B}_N(p,\frac{1}{\l _n})$ and $L\cap S^2_N(p,\frac{1}{\l _n})\neq \mbox{\rm \O }$.
\end{assertion}
\begin{proof}
Assume that $n$ is chosen sufficiently large so that $\frac{1}{\l _n}<r_0$ and so that
any leaf of ${\cal A}(r_0)$ that is contained in
$\overline{B}_N(p,\frac{1}{\l _n})\subset \overline{B}_N(p,r_0)$ has mean curvature greater than
$2H_0$ (recall that $H_0$ was defined in the statement of Assertion~\ref{ass6.8});
this choice of $n$ is possible since every leaf in ${\cal A}(r_0)$
contained in $\overline{B}_N(p,\frac{1}{\l _n})$ is compact embedded sphere
and for $n$ large,  the mean curvature such a leaf  is at least $\frac{1}{2\l _n}$.

Consider the non-empty open set
\[
A_-=\bigcup _{L\in {\cal A}_-}\mbox{Int}[\Delta (L)], \quad
\mbox{where}\quad {\cal A}_-=\{ L\in {\cal A}(r_0)\ | \ \Delta (L)\subset \overline
{B}_N(p,{\textstyle \frac{1}{\l _n}})\} .
\]
We claim that there exists $L_1\in {\cal A}(r_0)$ such that $L_1\subset \partial (A_-)$ and $\Delta
(L_1)\subset \overline{B}_N(p,\frac{1}{\l _n})$. To see this, take a point $q\in \partial (A_-)-\{ p\} $ and
a sequence of points $q_k\in A_-$ converging to $q$ as $k\to \infty $. Then, $q_k$ lies in the
interior of $\Delta (L_k)$ for some $L_k\in {\cal A}(r_0)$ with $\Delta (L_k)\subset
\overline{B}_N(p,\frac{1}{\l _n})$. As the norm of the second fundamental form of leaves
of ${\cal F}$ is uniformly bounded locally around $q$, then all $L_k$ can be locally expressed
around $q$ as graphs
of uniform size over their tangent planes at $q_k$. By the Arzel\`{a}-Ascoli theorem,
a subsequence of these graphs converges to a graph of constant mean curvature at least $2H_0$, and a
 monodromy argument shows that this limit graph is contained in a spherical leaf
$L_1\in {\cal A}(r_0)$ with $\Delta (L_1)\subset \overline{B}_N(p,\frac{1}{\l _n})$, so our claim is proved.

We will show that $L_1$ intersects $S^2_N(p,\frac{1}{\l _n})$ (so the assertion will be proved by
taking $L=L_1$). Arguing by contradiction, suppose $L_1\cap S^2_N(p,\frac{1}{\l _n})=\mbox{\O }$.
Now consider the family
\[
{\cal A}_+=\{ L\in {\cal A}(r_0)\ | \ L-\overline{B}_N(p,{\textstyle \frac{1}{\l _n}})\neq \mbox{\O } \mbox{ and }  \Int(\Delta (L)) \cap\Delta (L_1)\neq \O\} ,
\]
which we claim is non-empty. The arguments that demonstrate that this set is non-empty are more or less
the same as the ones given in the previous paragraph; one shows that for some point $q\in L_1-\{p\}$ there is a sequence of points
$q_k\in \overline{B}_N(p,{\textstyle \frac{1}{\l _n}})-\Delta(L_1) $ converging to $q$ and contained in respective leaves
$L_k$ with mean curvatures converging to the mean curvature of $L_1$, and so the leaves $L_k$  lie in ${\cal A}_+$ for $k$ large.

Since the family of balls $\{ \Delta (L)\ | \ L \in {\cal A}(r_0) \mbox{ and }
\Int(\Delta (L)) \cap\Delta (L_1)\neq \O\} $
is totally ordered under inclusion, it follows that for every $s\in \N$ and
$L^1,\ldots ,L^s\in {\cal A}_+$, we have $\bigcap _{i=1}^s[\Delta (L^i)\cap S^2_N(p,
\frac{1}{\l _n})]\neq \mbox{\O }$. Since $S^2_N(p,\frac{1}{\l _n})$ is compact, we conclude that there
exists a point $Q$ such that
\begin{equation}
\label{ass7.15A}
Q\in \bigcap _{L\in {\cal A}_+}[\Delta (L)\cap S^2_N(p,{\textstyle \frac{1}{\l _n}})].
\end{equation}
As ${\cal F}$ is a weak CMC foliation, there exists a leaf $L_Q\in {\cal F}$ passing through
$Q$, although $L_Q$ might not be unique. Let ${\cal F}_Q$ be the collection of leaves
of ${\cal F}$ passing through $Q$. Since $S^2_N(p,\frac{1}{\l _n})\subset
\mbox{Int}[\Delta (L')]$ and $Q\in S^2_N(p,\frac{1}{\l _n})$, then every such leaf
$L_Q\in {\cal F}_Q$ lies in ${\cal A}(r_0)$ (and thus, the closure of $L_Q$ is a sphere).
Also, every two leaves $L_Q,L'_Q\in {\cal F}_Q$ intersect tangentially, with one at one side
of the other; therefore either $\Delta (L_Q)\subset \Delta (L'_Q)$ or vice versa.  As there
exists a uniform local bound around $Q$ for the norms of the second fundamental forms
of leaves in ${\cal F}_Q$, we conclude that the union of the leaves in ${\cal F}_Q$
is a compact set of $N$. Thus, we can choose $\wt{Q}\in B_N(p,\frac{1}{\l _n})$ with the following
properties:
\begin{enumerate}[(P1)]
\item $d_N(Q,\wt{Q})<\frac{\de}{2}$, where $\de =d(Q,L_1)=d(Q,A_-)>0$.
\item $\wt{Q}\in \mbox{Int}[\Delta (L_Q)]$, for all $L_Q\in {\cal F}_Q$.
\end{enumerate}
As before, there exists $\wt{L}\in {\cal A}(r_0)$ such that $\wt{Q}\in \wt{L}$. Note that $Q\notin \wt{L}$ (otherwise $\wt{L}\in {\cal F}_Q$, which contradicts (P2) above). Therefore,
(P2) implies that $Q\notin \Delta (\wt{L})$. In turn, this implies by (\ref{ass7.15A}) that
$\wt{L}\notin {\cal A}_+$. As $\wt{L}\in {\cal A}(r_0)$, then by definition of ${\cal A}_+$
we have $\wt{L}\subset \overline{B}_N(p,\frac{1}{\l _n})$ and so, $\Delta (\wt{L})\subset
 \overline{B}_N(p,\frac{1}{\l _n})$. By definition of ${\cal A}_-$, this means that
 $\wt{L}\in {\cal A}_-$, from where Int$[\Delta (\wt{L})]\subset A_-$. This is impossible, as
 \[
 \de =d(Q,A_-)\leq d(Q,\wt{L})\leq d(Q,Q'),
 \]
which contradicts (P1). This contradiction finishes the proof of the assertion.
\end{proof}

\begin{assertion}
Item (2A) of Theorem~\ref{corol6.6} holds.
\end{assertion}
\begin{proof}
Consider a sequence $\l _n>0$ with $\l _n\to \infty $. As we have explained, there exists
$r_0>0$ small such that after extracting a subsequence, the weak CMC foliations $\l _n[
{\cal F}\cap B_N(p,r_0)]$ converge as $n\to \infty $ to a weak CMC foliation ${\cal F}_{\infty }$ of $\R^3-\{ \vec{0}\} $, and it only remains to show that ${\cal F}_{\infty }$ is not flat.
Consider the every $n\in \N$ the distance sphere $S^2_N(p,\frac{1}{\l _n})$.
By Assertion~\ref{ass7.15}, for $n$ sufficiently large there exists $L_n\in {\cal A}(r_0)$ such
that $L_n\subset \overline{B}_N(p,\frac{1}{\l _n})$ and $L_n\cap S^2_N(p,\frac{1}{\l _n})\neq \mbox{\O }$. Thus, the rescaled leaves $\l _nL_n$ of the weak CMC laminations
$\l _n[ {\cal F}\cap B_N(p,r_0)]$ stay inside the closed ball $\overline{B }_{\l _n N}(p,1)$ with a point in $S^2_{\l _nN}(p,1)$, and so, the limit foliation ${\cal F}_{\infty}$ has
a leaf contained in the closed unit ball of $\R ^3$ with some point in the unit sphere; in
particular, ${\cal F}_{\infty }$ is non-flat.
\end{proof}

\center{William H. Meeks, III at profmeeks@gmail.com\\
Mathematics Department, University of Massachusetts, Amherst, MA 01003}
\center{Joaqu\'\i n P\'{e}rez at jperez@ugr.es \qquad\qquad Antonio Ros at aros@ugr.es\\
Department of Geometry and Topology, University of Granada, Granada, Spain}

\end{document}